\newcommand{\Thetad}{{\Theta}^{\raisebox{0.5mm}{$\scriptscriptstyle \bullet$}}}
\newcommand{\cOd}{{\cO}^{\raisebox{0.5mm}{$\scriptscriptstyle \bullet$}}}
\newcommand{\cSd}{{\cS}^{\raisebox{0.5mm}{$\scriptscriptstyle \bullet$}}}
\newcommand{\Rd}{{\mathsf{R}}^{\raisebox{0.5mm}{$\scriptscriptstyle \bullet$}}}
\newcommand{\Kn}{K^{\! \tiny\textknit{U}}}
\newcommand{\Thn}{\Theta^{\! \tiny\textknit{U}}}
\newcommand{\C}{\mathbb{C}}
\newcommand{\Gm}{\mathbb{G}_\mathbf{m}}
\newcommand{\Ct}{\mathbb{C}^\times}
\newcommand{\Q}{\mathbb{Q}}
\newcommand{\Z}{\mathbb{Z}}
\newcommand{\R}{\mathbb{R}}
\newcommand{\bT}{\mathsf{T}}
\newcommand{\bA}{\mathsf{A}}
\newcommand{\bG}{\mathsf{G}}
\newcommand{\bAb}{\overline{\bA}}
\newcommand{\bB}{\mathsf{B}}
\newcommand{\bX}{\mathsf{X}}
\newcommand{\bP}{\mathbb{P}}
\newcommand{\cI}{\mathscr{I}}
\newcommand{\cL}{\mathscr{L}}
\newcommand{\cR}{\mathscr{R}}
\newcommand{\cP}{\mathscr{P}}
\newcommand{\cU}{\mathscr{U}}
\newcommand{\cQ}{\mathscr{Q}}
\newcommand{\cF}{\mathscr{F}}
\newcommand{\cN}{\mathscr{N}}
\newcommand{\cG}{\mathscr{G}}
\newcommand{\cV}{\mathscr{V}}
\newcommand{\cE}{\mathscr{E}}
\newcommand{\cA}{\mathscr{A}}
\newcommand{\cB}{\mathscr{B}}
\newcommand{\bla}{\boldsymbol{\lambda}}
\newcommand{\du}{\boldsymbol{\delta\upsilon}}
\newcommand{\bDel}{\boldsymbol{\Delta}}
\newcommand{\bphi}{\boldsymbol{\phi}}
\newcommand{\bGamma}{\boldsymbol{\Gamma}}
\newcommand{\cH}{\mathscr{H}}
\newcommand{\fC}{\mathfrak{C}}
\newcommand{\ft}{\mathfrak{t}}
\newcommand{\cS}{\mathscr{S}}
\newcommand{\bbA}{\overline{\bA}}
\newcommand{\bTb}{\overline{\bT}}
\newcommand{\vth}{\vartheta} 
\newcommand{\cO}{\mathscr{O}}
\newcommand{\Hd}{{H}^{\raisebox{0.5mm}{$\scriptscriptstyle \bullet$}}}
\newcommand{\fa}{\mathfrak{a}}
\newcommand{\fAttr}{\Attr^f}
\newcommand{\Attrc}{\Attr^{\le}}
\newcommand{\Attrl}{\Attr^{<}}
\newcommand{\cHom}{\cH\!\!\textit{om}}
\newcommand{\cs}{_{\textup{c}}}
\newcommand{\ppf}{{\circledast}}
\newcommand{\pf}{_{\ppf}}
\newcommand{\forp}{\mathsf{p}}
\newcommand{\fmo}{\mathfrak{m}_1}
\newcommand{\Stab}{\mathsf{Stab}}
\newcommand{\Attr}{\mathsf{Attr}}
\newcommand{\dd}{\triangledown}
\newcommand{\jb}{\bar \jmath}
\newcommand{\Qq}{\mathbb{q}}
\newcommand{\flx}{\lfloor x \rfloor} 
\newcommand{\bnu}{\boldsymbol \nu}
\newcommand{\Fq}{(\!(q)\!)}
\newcommand{\FqN}{(\!(q^{1/N})\!)}
\newcommand{\tor}{\textup{tors}}
\newcommand{\gen}{\textup{gen}}
\DeclareMathOperator{\Coh}{Coh}
\DeclareMathOperator{\Hom}{Hom}
\DeclareMathOperator{\Ker}{Ker}
\DeclareMathOperator{\Aut}{Aut}
\DeclareMathOperator{\Lie}{Lie}
\DeclareMathOperator{\Ell}{Ell}
\DeclareMathOperator{\chr}{char}
\DeclareMathOperator{\pt}{pt}
\DeclareMathOperator{\cochar}{cochar}
\DeclareMathOperator{\cha}{char}
\DeclareMathOperator{\rk}{rk}
\DeclareMathOperator{\Pic}{Pic}
\DeclareMathOperator{\Spec}{Spec}
\DeclareMathOperator{\Span}{span}
\DeclareMathOperator{\Proj}{Proj}
\DeclareMathOperator{\sAttr}{Attr}
\DeclareMathOperator{\supp}{supp}
\DeclareMathOperator{\diag}{diag}
\DeclareMathOperator{\codim}{codim}
\DeclareMathOperator{\weight}{weight}
\DeclareMathOperator{\conv}{conv}
\DeclareMathOperator{\Cone}{Cone}
\DeclareMathOperator{\ord}{ord}
\DeclareMathOperator{\NS}{NS}
\newcommand{\Ld}{{\Lambda}^{\!\raisebox{0.5mm}{$\scriptscriptstyle
      \bullet$}}\!}
\newtheorem{Theorem}{Theorem}
\newtheorem{Lemma}{Lemma}[section]
\newtheorem{Proposition}[Lemma]{Proposition}
\theoremstyle{definition}
\newtheorem{Definition}{Definition}
\newcommand{\Kbar}{\overline{K}}
\begin{document}

\title{Inductive construction of stable envelopes} 
\author{Andrei Okounkov} 
\date{}
\maketitle

\abstract{We revisit the construction of stable envelopes in
  equivariant elliptic cohomology \cite{ese} and give a direct inductive proof of
  their existence and uniqueness in a rather general situation. We also discuss the specialization
  of this construction to equivariant K-theory.} 

\setcounter{tocdepth}{2}
\tableofcontents

\section{Introduction}

\subsection{Stable envelopes}

\subsubsection{}

While representation theory works with linear operators between
vector spaces, the geometric representation theory works with 
correspondences. By definition a correspondence between, say, two
smooth quasiprojective algebraic varieties $\bX$ and $\bX'$ over $\C$ is an equivariant cohomology, or K-theory, or
elliptic cohomology class on the product $\bX \times \bX'$. With appropriate
properness assumptions, these can be composed, and this composition is
linear over the corresponding cohomology theory of a point.

While this setting is extremely general, there is one very particular
class of correspondences, the stable envelopes, that has been a focus of a lot of current
research, with decisive application to geometric construction of
quantum groups, including elliptic quantum groups and related
algebras, as well as to a number of core questions in enumerative
geometry and mathematical physics, see \cites{ese, AO2, MO1, Opcmi,
  SaltLake, Rio} for an introduction.

\subsubsection{}

If an algebraic torus $\bA$ acts on $\bX$ then the fixed locus $\bX' =
\bX^\bA$ is smooth and, moreover, there is a locally closed smooth
submanifold 
\begin{equation}
\Attr = \{(x,y), \lim_{a\to 0} a\cdot x = y\} \subset \bX \times
\bX^\bA\label{defattr}
\end{equation}
for any generic choice of $a\in \bA$ going to infinity of the
torus. Stable envelopes are certain canonical extension of these
attracting (also known as stable) manifolds to well-defined
correspondences between $\bX^\bA$ and $\bX$.

{} From definitions, one easily veryfies the uniqueness of stable
envelopes. Their 
existence, however, is far from obvious, which is a reason why
they are a powerful and versatile tool. 

\subsubsection{}

A direct geometric proof of existence of stable envelopes in
equivariant cohomology was given in \cite{MO1}. That line of
argument, however, is not available in equivariant K-theory and
elliptic cohomology.

A very different argument for existence of stable envelopes, which is
specific to $\bX$ being a Nakajima quiver variety, or more generally a
GIT quotient of a certain special form, was given in \cite{ese}. Since
elliptic stable envelopes are particularly important for applications,
it highly desirable to have a more general and flexible way to
construct them.

\subsubsection{}

In essense, an equivariant elliptic cohomology class on $\bX$ is a section of a
line bundle $\cS$ on the scheme $\Ell_{\textup{eq}}(\bX)$, where the
equivariance is with respect to some group which contains $\bA$ in its
center. Being an extension of \eqref{defattr} puts a numerical
constraint on $\cS$, that is, a constraint on the degree of $\cL$.
We call bundles satisfying this constraint
\emph{attractive}, see Definition \ref{d_attr}.

The main result of the paper may be informally summarized as proving
that elliptic stable envelopes exist, with a direct inductive construction, whenever there exist attractive
line bundles for $\bX$, see the following section for a precise list
of our assumptions.

Existence of attractive line bundles is a nontrivial contraint if $\rk
\bA > 1$. Not surprisingly, the most powerful application of stable envelopes,
such as geometric construction of quantum groups, require tori of rank
more than one. 

\subsubsection{}
We also give a parallel construction in equivariant K-theory and check
that the two constructions agree when the elliptic curve of the
elliptic cohomology theory degenerates to a nodal curve.

\subsection{Assumptions}

\subsubsection{} \label{s_quasiprojective_X}

Let $\bX$ be a smooth quasi-projective algebraic variety over $\C$ with an action of
a torus $\bT$.  Since $\bX$ is smooth, it follows, see e.g.\ Theorem
5.1.25 in \cite{CG}, that the action of
$\bT$ can be linearized, that is, the quasi-projective embedding
$\bX\subset \bP(\C^N)$ may be chosen $\bT$-equivariant. We fix a
subtorus $\bA \subset \bT$.

\subsubsection{}

The logic of this paper does not require any assumptions about
equivariant formality, tautological generation, or the vanishing of
the odd cohomology of $\bX$.

\subsubsection{}\label{s_assume_Attr}

We require that the union of attracting manifolds for $\bA$ is closed
in $\bX$, see Section \ref{s_Attr_closed}. 

\subsubsection{}
Stable envelopes are improved versions of attracting manifolds \eqref{defattr} for the 
subtorus $\bA \subset \bT$. If $\rk \bA > 1$, the existence of stable
envelopes puts a nontrivial constraint on the $\bA$-action, see Section
\ref{s_attr_obs}. 

One
geometrically transparent way to satisfy this constraint is to
have an $\bA$-polarization, that is, a class $T^{1/2}_\bX \in K_\bA(\bX)$
y
such
that
$$
T\bX = T^{1/2}_\bX + \left(T^{1/2}_\bX\right)^\vee \quad \textup{in $K_\bA(\bX)$}\,. 
$$
Existence of a polarization is assumed in
the definition of elliptic stable envelopes
given in \cite{ese}. Here we work with weaker assumptions.

\subsection{Attracting manifolds}

\subsubsection{}

The setup is the same as e.g.\ Section 3.2 of \cite{MO1} or Section
3.1 of \cite{ese}. Let
$$
\bX^\bA = \bigsqcup F_i
$$
be the decomposition of the $\bA$-fixed locus into components. Since $\bX$ is smooth, each $F_i$
is smooth.

\subsubsection{} \label{s_fC} 

The $\bA$-weights in the normal bundle $N_{\bX/\bX^\bA}$ form a finite subset $\{w_i\}
\subset \cha(\bA)$. The dual hyperplanes partition the vector space 
\begin{equation}
\Lie_\R \bA = \cochar \bA \otimes_\Z \R\label{LieR}
\end{equation}
into finitely many
chambers. A choice of a chamber 
$\fC$ separates  $\{w_i\}$ into those positive on $\fC$,
which we call attracting, and those negative on $\fC$, which we call
repelling. We define
$$
a \to  0_\fC  \quad \Leftrightarrow \quad
w_i(a) \to
\begin{cases}
  0\,, & \textup{$w_i$ is attacting} \,,\\
  \infty\,, & \textup{$w_i$ is repelling} \,. 
\end{cases}
$$
The point $0_\fC$ may be interpreted as a fixed point in the  toric compactification 
$\bAb \supset \bA$ defined by the fan of the chambers.

\subsubsection{} \label{s_Attr_closed} 

While the ability to vary
$\fC$ is essential in the general development of the theory, in
this paper we can fix a choice of $\fC$ once and for all.  It gives
a locally closed submanifold \eqref{defattr}. 
Its projection to two factors is a locally closed embedding and a
fibration is affine spaces, respectively, by the classical results of
\cite{BB}.

We require that the image of $\Attr$ in $\bX$ is closed, cf.\ Section
\ref{s_assume_Attr}. 

For a component $F$ of the fixed locus $\bX^\bA$, we denote
by $\sAttr(F) \subset \bX$ its attracting manifold. This is a
projection of a component of $\Attr$ to the first factor. 

\subsubsection{} \label{s_partial} 

Since the $\bA$-action on
$\bX$ is linearized, the set of components $\{F_i\}$ is partially
ordered by the containment in the closure of the attracting
manifolds, that is,
\begin{equation}
\overline{\sAttr(F_i)} \supset F_j  \Rightarrow F_i \ge F_j
\,.\label{part_order}
\end{equation}
Iterating taking closures and attracting manifolds produces
a $\bT$-invariant singular closed subvariety
\begin{equation}
\fAttr \subset \bX \times \bX^\bA\label{fAttr}
\end{equation}
 formed by the pairs 
$(x,y)$ that belong to a chain of closures of attracting
$\bA$-orbits.
By construction, $\fAttr \subset \Attrc$, where 
\begin{equation}
\Attrc = \bigcup_{F_j \le F_i} \sAttr(F_j) \times F_i \,.\label{Attrc}
\end{equation}

\subsubsection{} 

Stable envelopes are certain canonical $\bT$-equivariant
correspondences supported on $\Attr^f$.  Their exact flavor
depends on the chosen cohomology theory. 
The main focus in this
paper is on stable envelopes in equivariant elliptic cohomology, as
defined
in \cite{ese}. 

\subsubsection{} 

The main aspect in which elliptic cohomology differs from equivariant
cohomology $\Hd_\bT(\bX)$
or equivariant
K-theory $K_\bT(\bX)$ is the
following. While cohomology classes  form a
supercommutative ring,
one doesn't tend to think about them as functions on $\Spec
\Hd_\bT(\bX)$. In elliptic theory, 
$ \Spec \Hd_\bT(\bX)$ is promoted to a superscheme 
$$
( \Spec \Hd_\bT(\bX), \Hd_\bT(\bX)) \rightsquigarrow (\Ell_\bT(\bX),
\cOd_{\Ell_\bT(\bX)})\,,
$$
which is no longer affine. As a result, global sections 
of $\cOd_{\Ell_\bT(\bX)}$ are not rich enough to account for the geometry
of $\bT$ and $\bX$. 

In a nutshell, 
elliptic cohomology classes are sections of 
line bundles on $\Ell_\bT(\bX)$. In particular, the elliptic
stable envelopes are global sections of certain line bundles on $\Ell_\bT(\bX \times
\bX^\bA)$. We note that the very existence of the required line bundles
puts a nontrivial constraint on the $\bA$-action on $\bX$, see Section
\ref{s_attr_obs}.

% \subsubsection{}
% Correspondences in $\bX \times \bX^\bA$ are meant to take elliptic classes
% on $\bX^\bA$ to elliptic classes on $\bX$. In particular, the stable
% envelope may be phrased as a solution to an interpolation problem for sections of
% line bundles on $\Ell_\bT(\bX)$ of a certain degree. Namely, they can be uniquely
% reconstructed from their restriction to $\bX^\bA$ under some assumption
% on their support. 

\subsection{Equivariant elliptic cohomology}

\subsubsection{} 

Let $E$ be an elliptic curve over a Noetherian affine base scheme
$\bB$.
For the purposes of this
paper, one doesn't loose or gain much if one assumes that
$\bB=\Spec \C$ or $\bB=\Spec \C(\!(q)\!)$. Since elliptic stable envelopes are
unique without invoking any equivalence relations, their construction
is local over the base scheme.  

\subsubsection{}\label{s_funct} 

%\task{G-spaces, pairs $(\bX, \partial \bX)$ }

Let $\bG$ be a compact Lie group. Equivariant elliptic cohomology, developed in 
\cites{Groj,GKV,Rosu,Lurie,Gepner,Ganter} and other papers, defines a
functor
\begin{equation}
\Ell_\bG(\bX,\partial \bX) : 
\big\{ \textup{pairs of $\bG$-spaces} \big\} \to \{\textup{graded superschemes
  over $\bB$}\} \,,\label{EllbG}
\end{equation}
covariant with respect to action-preserving maps
\begin{equation}
f: (\bG_1, \bX_1,\partial \bX_1) \to (\bG_2, \bX_2,\partial \bX_2)  \, \label{map_pairs} \,. 
\end{equation}
In this paper, we stay entirely in the world of unitary and
abelian groups $\bG$. We assume that $E$ is such that the functor
\eqref{EllbG} has been defined.

For $\bG$ connected, we denote the split reductive group over $\Z$ corresponding to $\bG$ 
by the same symbol $\bG$. With this convention, we can talk about both
the $\bA$-attracting manifolds and $\bA$-equivariant elliptic
cohomology without overloading the notation.

We abbreviate $\Ell_\bG(\bX,\partial \bX)$ to $\Ell_\bG(\bX)$ when $\partial
\bX = \varnothing$. 

\subsubsection{}

To save on notation for functorial maps, we abbreviate $\Ell(f)$ to
just $f$ in what follows. We use $f^*$ and $f_*$ to denote pullback of coherent
sheaves under $\Ell(f)$.
Note the crucial difference between $f_*$ and push-forward in elliptic
cohomology, see Section \ref{s_push}. 

% \subsubsection{} 

% We abbreviate $\Ell_\bG(\bX,\partial \bX)$ to $\Ell_\bG(\bX)$ when $\partial
% \bX = \varnothing$ or when $\partial
% \bX$ is understood. For example, in the setup of Section \ref{s_quasiprojective_X}, one
% may find a $\bT$-equivariant embedding
% $$
% \bX \subset \overline{\bX} \subset \bP(\C^N)
% $$
% such that $\overline{\bX}$ is a smooth projective variety. One then sets
% $\partial Y = \overline{Y} \setminus Y$ for any $Y \subset \bX$. 

\subsubsection{}

The grading in \eqref{EllbG} refers to the $\Z$-grading of the
structure sheaf
\begin{equation}
\cOd_{\Ell_\bG(\bX)} = \bigoplus_{k\in \Z}
\cO^k_{\Ell_\bG(\bX)}\label{cOd}
\end{equation}
of $\Ell_\bT(\bX)$. Periodicity in elliptic cohomology means that
$$
\cO^{k-2}_{\Ell_\bG(\bX)} = \cO^{k}_{\Ell_\bG(\bX)} \otimes \omega \,,
\quad  
\omega= T^*_\textup{origin} E  \,, 
$$
where  $\omega$ is a line bundle pulled back from the base $\bB$. This
can be interpreted as having the whole theory not over $\bB$ but over the
total space of a $\Gm$-bundle associated to $\omega$.

\subsubsection{}

Line bundles $\cL$ on $\Ell_{\bG}(\bX)$ will be, by definition, graded,
that is, we set 
\begin{equation}
\Pic(\Ell_{\bG}(\bX)) = H^1\left(
  \left(\cO^{0}_{\Ell_\bG(\bX)} \right)^\times \right) \,.\label{defPic}
\end{equation}
These can be interpreted as invertible graded 
$\cOd_{\Ell_\bG(\bX)}$-bimodules and they
form a commutative group with respect to $\otimes$.

\subsubsection{} 

Stable envelopes are even classes and $\cO^{k}_{\Ell_\bG(\bX)}$ with
$k\ne 0$ will not play an important role in this paper. For brevity,
we will suppress the degree grading from our notation, except where it
essential 
(which happens 
in the analysis of the long exact sequence in Section \ref{s_proof1}).

\subsubsection{}

By construction, 
$$
\Ell_{\Ct}(\pt) = E 
$$
and
\begin{equation}
  \label{groupl}
  \xymatrix{
    \Ct \times \Ct \ar[rrr]^{ (z_1,z_2) \to z_1z_2}
    \ar[d]_{\Ell_{\bullet}(\pt)}&&&   \Ct
    \ar[d]^{\Ell_{\bullet}(\pt)}\\
    E \times E \ar[rrr]^{\textup{group law}}&&& E
  } \,. 
\end{equation}
One can treat the coordinate on $\Ct$ is a stand-in for the unavailable coordinate on
$E$. If fact, for $\bB=\Spec\C$ one can take $E=\Ct/q^\Z$ for some
$|q|<1$.

\subsubsection{}
The map to the point
\begin{equation}
p_\bX: (\bT,\bX) \to (\bT, \pt) \label{p_X}
\end{equation}
makes $\Ell_\bT(\bX)$ a scheme over 
\begin{equation}
\cE_\bT \overset{\tiny\textup{def}} = \Ell_\bT(\pt) =
\cochar(\bT) \otimes_\Z E\,. \label{cEbT}
\end{equation}

\subsubsection{}

We will be using various constructions in elliptic cohomology which
are reviewed in Appendix \ref{s_constr}.

\subsection{Plan of the paper and acknowledgments}

\subsubsection{}
The three main sections of the paper are devoted to: (1) the proof of
existence and uniqueness of stable envelopes in equivariant elliptic
cohomology, (2) same in equivariant K-theory, (3) the relation between
the two construction.

Elliptic cohomology meets K-theory when the underlying elliptic curve
degenerates to a nodal curve of genus $1$. We prove that this
degeneration respects stable envelopes, strengthening earlier results of
\cite{ese,KonSmi}. Perhaps the reader will find some independent use for the notions of compactified
K-theory and nodal K-theory which we develop in the course of the
proof.

\subsubsection{}
Further applications of the inductive construction of stable envelopes
will be given in \cite{nonab}.

\subsubsection{}
The present paper grew our of the author's joint projects \cite{ese, DHLMO} with Mina
Aganagic, Davesh Maulik, and Daniel Halpern-Leistner. It owes a lot to
all of them. While, perhaps, we achieve a certain progress on a number
of technical points in this paper, the reader should consult
\cite{ese, DHLMO} as well as perhaps \cite{MO1, Opcmi, AO2} for a
comprehensive discussion of stable envelopes and their many
applications.

\subsubsection{}
I am very grateful to  V.~Alexeev, R.~Bezrukavnikov, B.~Bhatt, A.~Blumberg, J.~de
Jong, I.~Krichever, M.~Mustata, B.~Poonen, 
E.~Rains, R.~Rouquier, D.~Sinha, and others for valuable correspondence during
the writing of this paper.

\subsubsection{}
I am grateful to the Simons Foundation for being supported as a Simons
Investigator. I thank the
Russian Science Foundation for the support by the grant  19-11-00275.

\subsection{Dedication}

This paper was written in the summer of 2020, the time of great grief
and loss for millions of people around the globe. I would like to
dedicate it to the memory of Boris Dubrovin, whose untimely passing
back in March 2019 was such a great loss for the mathematics as a whole and for me personally.

I grew up reading his \emph{Modern Geometry}, and I cherish the
memories of our, regrettably, infrequent interactions later in
life. He always radiated enthusiasm for geometry, mathematics, music
(I might have met him more often in the Moscow Conservatory than at the
Moscow State University), and life and general. His pioneering vision
put enumerative geometry in the front and center of modern geometry
and mathematical physics. I wish I could explain the results of this
paper to him.

\section{Elliptic stable envelopes}

\subsection{Attracting manifolds again}

\subsubsection{}
Let $F$ be a component of the fixed locus
$\bX^\bA$ and consider the
diagram 
\begin{equation}
  \label{corrM}
  \xymatrix{
   \bX \times F \ar[d]_{\forp_1} \ar[drr]^{\forp_2} && \sAttr(F)
   \ar[ll]_{\jmath} \ar[d]^\pi\\
   \bX   && F \ar[ll]_\iota\,,  
    } 
  \end{equation}
  in which $\iota$ and $\jmath$ are inclusions,  $\forp_1$ and
  $\forp_2$ are projections as in \eqref{diagcorr}, and
  $$
  \pi = \forp_2 \, \jmath
  $$
  is a fibration in affine spaces, and thus a homotopy
  equivalence. In particular,
$$
  \pi: \Ell_\bT(\sAttr(F)) \xrightarrow{\quad \sim\quad} \Ell_\bT(F)
  \,, 
  $$
is an isomorphism. 

\subsubsection{} 

  Restricted to $F$, any $\bA$-equivariant K-theory class decomposes
according to the characters of $\bA$ and, in particular, splits into
attracting, repelling, and $\bA$-fixed directions. For instance, we have
\begin{equation}
  \label{TXsplit}
  T\bX\big|_{F} = N_{\bX/F,>0} + N_{\bX/F,<0} +T F \,, 
\end{equation}
where the subscripts $>0$ and $<0$ indicate the attracting and
repelling directions, respectively. With this notation, we have
\begin{equation}
 N_\jmath = \pi^*( TF + N_{\bX/F,<0})  \,,\label{Nj}
 \end{equation}
 where $N_j$ is the normal bundle to the inclusion $\jmath$ in
 \eqref{corrM}. 

 \subsubsection{}

 Recall we think of  sections of line bundles $\cL$ on $\Ell_\bT(\bX)$ as
 elliptic cohomology classes assigned to cycles in $X$. For instance,
 if we have a proper complex oriented map
 $$
 f: Y \to X 
 $$
 and there is a bundle $V$ on $X$ such
 that $f^*V = N_{f}$ then $f\pf$ induces a section
 $$
 \cO_{\cE_\bT} \to \Theta(V) 
 $$ 
 which represents in elliptic cohomology what we would call $f_*[Y]
 \in \Hd_\bT(\bX)$. For example, the inclusion $\pt \to \bX$ of a
 fixed point gives a section of
 $\cL=\Theta(T\bX)$, while the constant section of $\cL=\cO_{\Ell_\bT(\bX)}$
 corresponds to the identity map $\bX \to \bX$\,. 

 The degree
 $$
 \deg \cL \in \Pic(\Ell_\bT(\bX)) \big/ \Pic_0(\Ell_\bT(\bX))
 $$
 gives a measure of the codimension of the corresponding cycle. In
 particular, the degree $\deg_\bA \cL$ in equivariant variables
 as defined in \eqref{degbA} is a coarse measure of the
 codimension. 

 \subsubsection{}
 Since we are looking for a line bundle $\cS$ to represent attracting
 manifolds, it is logical to make the following

 \begin{Definition}\label{d_attr}
  A line bundle $\cS$ on $\Ell_\bT(\bX)$ is called \emph{attractive} for a
  given choice $\fC$ of attracting directions if
  \begin{equation}
  \deg_\bA \cS = \deg_\bA \Theta(N_{\bX/\bX^\bA,<0}) \,.
\label{dattr}
\end{equation}%
\end{Definition}

\medskip 

\noindent 
For a given line bundle $\cS$ on $\Ell_\bT(\bX)$ we define
\begin{equation}
\cS_\bA = \iota^* \cS \otimes \Theta(-N_{\bX/\bX^\bA,<0})
\in \Pic (\Ell_\bT(\bX^\bA)) \,. \label{cSbA}
\end{equation}
Clearly, \eqref{dattr} is equivalent to $\deg_\bA \cS_\bA = 0$.
Also, all attractive bundles form a principal homogenous space under
$$
\Ker \deg_\bA \subset \Pic(\Ell_\bT(\bX)) \,,
$$
which contains all K\"ahler line bundles. 

\subsubsection{}
\begin{Lemma}
  If $\cS$ is attractive and $\jmath$ in \eqref{corrM} is proper,
  then $\jmath\pf$ gives a section
  \begin{equation}
  [\Attr] : \cO_{\cE_\bT}  \to \cS \boxtimes \left(\cS_\bA\right)^\dd
  \,. 
\label{[Attr]}
\end{equation} %
\end{Lemma}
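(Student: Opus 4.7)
The plan is to realize $[\Attr]$ as the elliptic pushforward $\jmath\pf(1)$ of the unit section along $\jmath$, which is proper by hypothesis. By the Thom-class construction in equivariant elliptic cohomology---the prototype being the inclusion $\pt\hookrightarrow\bX$ producing a section of $\Theta(T\bX)$---a pushforward along a proper closed embedding with normal bundle $N_\jmath$ yields a section of any line bundle $\cL$ on the target that is equipped with a chosen isomorphism $\jmath^*\cL\cong\Theta(N_\jmath)$. The entire proof thus reduces to producing such an isomorphism for $\cL=\cS\boxtimes(\cS_\bA|_F)^\dd$ on $\Ell_\bT(\bX\times F)$, component by component for $F\subset\bX^\bA$, and assembling the results.

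To produce the isomorphism, factor $\jmath$ using $\forp_1\circ\jmath=\iota_{\sAttr(F)}:\sAttr(F)\hookrightarrow\bX$ and $\forp_2\circ\jmath=\pi$. Because $\pi$ is an affine-space fibration with section $\sigma:F\hookrightarrow\sAttr(F)$ (the $\bA$-fixed inclusion) and $\iota=\iota_{\sAttr(F)}\circ\sigma$, the composition $\iota\circ\pi$ is homotopic to $\iota_{\sAttr(F)}$, so $\iota_{\sAttr(F)}^*\cS\cong\pi^*\iota^*\cS$ in $\Pic(\Ell_\bT(\sAttr(F)))$. Substituting the definition $\cS_\bA|_F=\iota^*\cS\otimes\Theta(-N_{\bX/F,<0})$ and applying the Serre-duality involution $\dd$ (which twists by $\Theta(TF)$ on each smooth $F$), the two $\iota^*\cS$-factors cancel canonically and one obtains
\[
\jmath^*\bigl(\cS\boxtimes(\cS_\bA|_F)^\dd\bigr)\;\cong\;\pi^*\Theta\bigl(TF+N_{\bX/F,<0}\bigr)\;=\;\Theta(N_\jmath),
\]
the last equality by \eqref{Nj}.

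The attractive hypothesis \eqref{dattr} is exactly what forces $\deg_\bA$ to agree on the two sides of the display: since $\deg_\bA TF=0$ (as $F\subset\bX^\bA$), both line bundles have $\bA$-degree $\deg_\bA\Theta(N_{\bX/F,<0})$, which is the restriction to $F$ of the attractive condition on $\bX$. Without attractiveness, the two sides would differ by a nontrivial $\bA$-character and no Thom identification (hence no pushforward section) could exist. The main obstacle I anticipate is not the degree bookkeeping, which is routine from the definitions, but promoting the displayed isomorphism to a \emph{canonical} one rather than merely one up to $\Pic_0$; this is precisely the content of the conventions for $\dd$ and for the elliptic pushforward reviewed in Appendix~\ref{s_constr}. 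Once those are in place, $\jmath\pf(1)$ is the unique canonical section of $\cS\boxtimes(\cS_\bA|_F)^\dd$, and summing over components of $\bX^\bA$ produces $[\Attr]$.
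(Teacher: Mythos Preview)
Your argument is correct and is essentially identical to the paper's: both identify $\jmath^*\bigl(\cS\boxtimes(\cS_\bA)^\dd\bigr)$ with $\Theta(N_\jmath)$ via the homotopy $\forp_1\jmath\sim\iota\pi$ and the formal cancellation of $\iota^*\cS$ against its inverse coming from $\cS_\bA$.

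One remark, however, is off. You write that ``without attractiveness, the two sides would differ by a nontrivial $\bA$-character and no Thom identification could exist.'' This is not so: the cancellation $\pi^*\iota^*\cS\otimes\pi^*(\iota^*\cS)^{-1}\cong\cO$ is purely formal and holds for \emph{any} line bundle $\cS$, because $\cS_\bA$ is \emph{defined} as $\iota^*\cS\otimes\Theta(-N_{<0})$ precisely so that this happens. The attractive hypothesis \eqref{dattr} plays no role in the present lemma; it enters later, guaranteeing that $\deg_\bA\cS_\bA=0$ so that the resonant locus $\bDel$ is controlled and the interpolation argument in Section~\ref{s_proof1} goes through. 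The paper states the hypothesis here only because this lemma is a building block for the eventual construction, not because the computation needs it.
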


\noindent
Recall that $\boxtimes$ denotes the tensor product of pullbacks via two projection
maps, here $\forp_1$ and $\forp_2$. 
Also note that the duality
$(\, \cdot \,)^\dd$, which was defined in \eqref{dd}, is applied here
in the $\bX^\bA$ factor.

\begin{proof}
Note that  the outer square in \eqref{corrM} commutes up to
  homotopy
  \begin{equation}
    \label{homot}
    \forp_1 \, \jmath \sim \iota \, \pi \,. 
  \end{equation}
  Therefore
  $$
  \jmath^* \forp_1^* \cS = \pi^* \iota^* \cS
  $$
  and 
  $$
  \jmath^* \left(\cS \boxtimes \left(\cS_\bA\right)^\dd \right)=
  \Theta(\pi^*(T F + N_{X/F})) = \Theta(N_{\jmath}) \,,
  $$
  as required. 
\end{proof}

\subsubsection{}
{}From \eqref{TXsplit}, we observe the following

\begin{Lemma}
If $\cS$ is attractive for $\fC$ then $\cS^\dd$ is attractive for
$-\fC$. 
\end{Lemma}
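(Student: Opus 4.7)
The plan is to unpack Definition \ref{d_attr} applied to the opposite chamber $-\fC$ and reduce to a bookkeeping identity built out of \eqref{TXsplit} and the duality $(\cdot)^\dd$.

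First, a weight $w$ of $\bA$ on the normal bundle $N_{\bX/\bX^\bA}$ is positive on $\fC$ if and only if it is negative on $-\fC$; therefore, denoting by a superscript the chamber used in the decomposition,
$$
N_{\bX/\bX^\bA, <0}^{-\fC} = N_{\bX/\bX^\bA, >0}^{\fC}\,.
$$
So the claim that $\cS^\dd$ is attractive for $-\fC$ amounts to showing
$$
\deg_\bA \cS^\dd = \deg_\bA \Theta\bigl(N_{\bX/\bX^\bA, >0}^{\fC}\bigr)\,.
$$

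Next, by \eqref{TXsplit} applied with $F$ replaced by each component of $\bX^\bA$, we have $T\bX\big|_{\bX^\bA} = N_{\bX/\bX^\bA,>0} + N_{\bX/\bX^\bA,<0} + T\bX^\bA$, and since $\bA$ acts trivially on $T\bX^\bA$ this last summand contributes nothing to $\deg_\bA$. Thus
$$
\deg_\bA \Theta(T\bX) = \deg_\bA \Theta\bigl(N_{\bX/\bX^\bA,>0}\bigr) + \deg_\bA \Theta\bigl(N_{\bX/\bX^\bA,<0}\bigr)\,.
$$

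Finally, the duality $(\cdot)^\dd$ of \eqref{dd} is of Serre type: $\cL \otimes \cL^\dd$ differs from $\Theta(T\bX)$ by a line bundle of vanishing $\bA$-degree, which yields the key identity $\deg_\bA \cL^\dd = \deg_\bA \Theta(T\bX) - \deg_\bA \cL$. Plugging in $\cL = \cS$ together with the attractiveness hypothesis $\deg_\bA \cS = \deg_\bA \Theta(N_{\bX/\bX^\bA,<0})$ and the previous display gives
$$
\deg_\bA \cS^\dd = \deg_\bA \Theta\bigl(N_{\bX/\bX^\bA,>0}\bigr) + \deg_\bA \Theta\bigl(N_{\bX/\bX^\bA,<0}\bigr) - \deg_\bA \Theta\bigl(N_{\bX/\bX^\bA,<0}\bigr) = \deg_\bA \Theta\bigl(N_{\bX/\bX^\bA,>0}\bigr)\,,
$$
which is exactly what was required. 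The only nontrivial step is the compatibility of $(\cdot)^\dd$ with $\deg_\bA$, which is a direct reading of the definition \eqref{dd}; the rest is pure chamber bookkeeping and the tangent decomposition \eqref{TXsplit}.
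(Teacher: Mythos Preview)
Your proof is correct and follows the same route as the paper, which merely points to \eqref{TXsplit} and leaves the arithmetic implicit. One small remark: from \eqref{dd} you have $\cL \otimes \cL^\dd = \Theta(T\bX)$ \emph{on the nose}, not merely up to a line bundle of vanishing $\bA$-degree, so the identity $\deg_\bA \cL^\dd = \deg_\bA \Theta(T\bX) - \deg_\bA \cL$ is immediate without invoking any Serre-type statement.
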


\subsubsection{}
It is an interesting question to characterize $\bA$-actions having
attractive line bundles. In general, the functor
\begin{equation}
(\bG,\bX) \to \Hd\left(
  \left(\cO^{0}_{\Ell_\bG(\bX)} \right)^\times \right)
\,.\label{defPic2}
\end{equation}
that generalizes \eqref{defPic} is an interesting functor to complexes
of abelian groups.

\subsubsection{}\label{s_attr_obs} 
Simple examples, starting with the the maximal torus
$\bA \subset PGL(3)$ acting on $\bP^2$, show that attracting line bundles
$\cS$ typically do not exist if $\rk \bA > 1$. In fact,
any subtorus $\bA' \subset \bA$ gives the following potential
obstruction to the existence of attracting bundles in the style of
\cite{GKM}. 

\begin{Proposition}
Let $F_1$ and $F_2$ be two
components of $\bX^\bA$ that belong to the same component of
$X^{\bA'}$ where $\bA' \subset \bA$ is a subtorus. If
$$
\deg_{\bA/\bA'} \Theta(N_{\bX/F_1,<0}) \ne
\deg_{\bA/\bA'} \Theta(N_{\bX/F_2,<0})
$$
then no attractive bundle $\cS$ exists. 
\end{Proposition}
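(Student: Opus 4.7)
The plan is to derive a contradiction by restricting the attractiveness condition to $F_1$ and $F_2$ through the common component $G \subset \bX^{\bA'}$ that contains them. First I would unpack the defining equation: restricting $\deg_\bA \cS = \deg_\bA \Theta(N_{\bX/\bX^\bA, <0})$ along $\iota: \bX^\bA \hookrightarrow \bX$ and isolating the individual fixed components yields
$$\deg_\bA \iota_{F_i}^* \cS = \deg_\bA \Theta(N_{\bX/F_i,<0}), \qquad i=1,2,$$
and projecting along $\cochar(\bA) \twoheadrightarrow \cochar(\bA)/\cochar(\bA')$ gives the corresponding equalities in $\deg_{\bA/\bA'}$.

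Next I would factor these restrictions through $G$: each $\iota_{F_i}$ decomposes as $F_i \hookrightarrow G \hookrightarrow \bX$, so $\iota_{F_i}^*\cS = \iota_{F_i, G}^* \cL$, where $\cL := \iota_G^* \cS$ is a line bundle on $\Ell_\bT(G)$. Since $\bA'$ acts trivially on $G$ by assumption, the $\bA/\bA'$-part of the degree of $\cL$ is captured by a single N\'eron--Severi type class intrinsic to $G$. The key rigidity step is that, because $G$ is connected, this class is locally constant over $G^\bA$: the classes $\deg_{\bA/\bA'} \iota_{F_1,G}^* \cL$ and $\deg_{\bA/\bA'} \iota_{F_2,G}^* \cL$ must coincide.

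Combining these two observations gives
$$\deg_{\bA/\bA'} \Theta(N_{\bX/F_1,<0}) = \deg_{\bA/\bA'} \Theta(N_{\bX/F_2,<0}),$$
contradicting the hypothesis. The main obstacle is the rigidity step, where one must verify, from the precise definition of $\deg_{\bA/\bA'}$ given in \eqref{degbA}, that such degrees for line bundles on $\Ell_\bT(G)$ are locally constant across the components of $G^\bA$. This should follow formally from the fact that the $\deg_{\bA/\bA'}$ class lives in a discrete quotient while all continuous variation is absorbed by $\Pic_0$, but it requires careful bookkeeping of the functorial setup of equivariant elliptic cohomology described in Section \ref{s_funct}, together with the splitting of $\Ell_\bT(G)$ induced by the triviality of the $\bA'$-action on $G$.
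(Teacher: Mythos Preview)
The paper states this proposition without proof; it is presented as an observation ``in the style of \cite{GKM}'' and the text moves immediately on to polarizations. So there is nothing in the paper to compare against directly.

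Your outline is the natural argument and is essentially correct: restrict $\cS$ to the common component $G\subset\bX^{\bA'}$, use connectedness of $G$ together with discreteness of the N\'eron--Severi class to see that the relevant degree is the same at $F_1$ and $F_2$, and derive a contradiction. You have also correctly identified the rigidity step as the place where care is needed.

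One point to sharpen: your phrase ``projecting along $\cochar(\bA)\twoheadrightarrow\cochar(\bA)/\cochar(\bA')$'' is not quite well-posed, since an element of $S^2\cha(\bA)$ (a quadratic form on $\cochar(\bA)$) does not canonically descend to the quotient $\cochar(\bA/\bA')$. The clean way to set things up is to work on $G$ from the start. Since $\bA'$ acts trivially on $G$, the pullback diagram \eqref{pullA} applied with the torus $\bA/\bA'\subset\bT/\bA'$ acting on $G$ produces a genuine map
\[
\deg_{\bA/\bA'}:\Pic\bigl(\Ell_{\bT}(G)\bigr)\longrightarrow H^0\bigl(G^{\bA},\,S^2\cha(\bA/\bA')\bigr),
\]
and it is \emph{this} invariant of $\cL=\iota_G^*\cS$ that you compare at $F_1$ and $F_2$. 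With this formulation the rigidity step is exactly what you say: the degree along the $\cE_{\bA/\bA'}$-fibers is locally constant over the connected base, hence agrees on $F_1$ and $F_2$. The attractiveness condition, read through the factorization $\iota_{F_i}=\iota_{F_i,G}\circ\iota_G$, then forces the two sides of the hypothesis to coincide.
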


\noindent 
A simple way to guarantee the existence of $\cS$ is to assume that
$X$ has a polarization.

\subsection{Polarization and dynamical shifts}

\subsubsection{} 

By definition, a \emph{polarization} of $\bX$ with respect to $\bA$ is a class 
$T^{1/2}_\bX\in K_\bA(\bX)$ such that
\begin{equation}
T\bX = T^{1/2}_\bX + \left(T^{1/2}_\bX\right)^\vee\label{polar}
\end{equation}
in $K_\bA(\bX)$.  The existence of a polarization implies,
in particular, that $\dim \bX$ is even. Like any $\bA$-equivariant
K-theory class, $T^{1/2}_\bX$ may be lifted to a $\bT$-equivariant
class so that \eqref{polar} holds modulo
$$
\cI_\bA  = \textup{ideal of $\bA$} \subset K_\bT(\pt) \,.
$$ 

\subsubsection{Example} \label{ex1} 
 Suppose we  have a $\bT$-equivariant diagram 
 \begin{equation}
\xymatrix{
  \bX \ar[rr]^{\textup{open}} \ar[rrd]_{p}&& T^*M  \ar[d]\\
  && M}  \,. \label{TM}
\end{equation}
Then either $\Ker(dp)$ or $p^*TM$ give a polarization with
respect to
$$
\bA = \bT \cap \Aut(\bX,\omega) \,, 
$$
where $\omega$ is the canonical symplectic form.

The base $M$ in \eqref{TM}  may be a
quotient
stack, and Nakajima quiver
varieties \cite{Nak1} are constructed as $\bX$ of this form. Recall that
Nakajima varieties and closely related algebraic varieties are among
the most important objects in geometric representation theory. In
fact, stable envelopes give one geometric approach to extracting 
representation theory from them, see \cites{MO1, Opcmi} for an
introduction. 

\subsubsection{}

\begin{Proposition}
 If $T^{1/2}$ is a polarization then $\cS=\Theta(T^{1/2})$ is
 attractive for any $\fC$. 
\end{Proposition}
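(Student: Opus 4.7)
The plan is to verify directly that $\deg_\bA \cS = \deg_\bA \Theta(N_{\bX/\bX^\bA,<0})$, working everywhere on the fixed locus $\bX^\bA$ where one can cleanly separate the characters of $\bA$.

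First I would restrict the polarization to $\bX^\bA$ and split into $\bA$-weight components:
\[
T^{1/2}\big|_{\bX^\bA} = T^{1/2}_{>0} + T^{1/2}_{<0} + T^{1/2}_{0},
\]
with attracting, repelling, and fixed parts relative to $\fC$. Since $\Theta$ is additive on direct sums in $K$-theory, and since classes with trivial $\bA$-character contribute nothing to $\deg_\bA$, we get
\[
\deg_\bA \cS = \deg_\bA \Theta\!\left(T^{1/2}_{>0}\right) + \deg_\bA \Theta\!\left(T^{1/2}_{<0}\right).
\]
Dualization flips the signs of $\bA$-weights, so applying $(\,\cdot\,)^\vee$ exchanges the attracting and repelling summands. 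Combining this with \eqref{polar} and \eqref{TXsplit} (that is, matching $\bA$-weights on both sides of $T\bX = T^{1/2} + (T^{1/2})^\vee$ restricted to $\bX^\bA$) yields the $K_\bA(\bX^\bA)$-identity
\[
N_{\bX/\bX^\bA,<0} = T^{1/2}_{<0} + \bigl(T^{1/2}_{>0}\bigr)^\vee.
\]

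The key step, and the only nontrivial one, is the identity $\deg_\bA \Theta(V^\vee) = \deg_\bA \Theta(V)$ for any $\bA$-equivariant $K$-theory class $V$. This is a reflection of the fact that the $\bA$-degree of a theta line bundle is a quadratic function of the characters appearing in $V$: it is the quasi-periodicity cocycle of $\Theta(V)$ along the $\bA$-lattice, and so depends only on the symmetric second moment of the weights, which is invariant under the involution $[-1]$ on $\cE_\bA$. Once this is in hand, we substitute to find
\[
\deg_\bA \Theta\!\left(N_{\bX/\bX^\bA,<0}\right) = \deg_\bA \Theta\!\left(T^{1/2}_{<0}\right) + \deg_\bA \Theta\!\left(\bigl(T^{1/2}_{>0}\bigr)^\vee\right) = \deg_\bA \Theta\!\left(T^{1/2}_{<0}\right) + \deg_\bA \Theta\!\left(T^{1/2}_{>0}\right),
\]
which matches the expression for $\deg_\bA \cS$ above, proving \eqref{dattr}.

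The only point that requires care is the invariance of $\deg_\bA$ under dualization; the rest is bookkeeping with weight decompositions. One should verify that the lift of $T^{1/2}$ from $K_\bA$ to $K_\bT$ (which is defined only modulo $\cI_\bA$) does not affect the equality of $\bA$-degrees, since $\deg_\bA$ annihilates any $\bT$-character that is trivial on $\bA$. The argument is independent of the choice of chamber $\fC$ because swapping $\fC \leftrightarrow -\fC$ simultaneously swaps $T^{1/2}_{>0} \leftrightarrow T^{1/2}_{<0}$ and $N_{\bX/\bX^\bA,<0} \leftrightarrow N_{\bX/\bX^\bA,>0}$, and the argument is symmetric in these.
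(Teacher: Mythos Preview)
Your proof is correct and follows essentially the same route as the paper: decompose $T^{1/2}\big|_{\bX^\bA}$ by $\bA$-weight, identify $N_{\bX/\bX^\bA,<0} = T^{1/2}_{<0} + (T^{1/2}_{>0})^\vee$ (in $K_\bA$; the paper writes this in $K_\bT$ with the correction term $\du\in\cI_\bA\,K_\bT(\bX^\bA)$, which is exactly the lifting ambiguity you flag at the end), and then use invariance of $\Theta$ under dualization. The only cosmetic difference is that the paper invokes the stronger fact $\Theta(V^\vee)=\Theta(V)$ from \eqref{ThVdual} (the divisor $D_\Theta$ is $[-1]$-symmetric) rather than arguing for equality of $\bA$-degrees via the quadratic form, but your version is sufficient for what is needed.
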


\begin{proof}
  {}From \eqref{polar} and \eqref{TXsplit} we conclude 
\begin{equation}
  \label{TXs}
 N_{\bX/\bX^\bA,<0}   = T^{1/2}_{<0} +
 \left(T^{1/2}_{>0}\right)^\vee+\du \,, \quad \du \in \cI_\bA \, K_\bT(\bX^\bA) \,, 
\end{equation}
where $T^{1/2}_{\gtrless 0}$ denote the attracting and repelling
parts of $T^{1/2}\big|_{\bX^\bA}$.
The equality \eqref{ThVdual} and
$$
\deg_\bA \Theta(\du) = 0
$$
prove the claim. 
\end{proof}

\subsubsection{}

The K-theory class $\du$ depends on the choice of $\fC$, that is, on
the choice of the attracting directions. In this sense it is
dynamic. It also corresponds to the dynamical variables in the
elliptic quantum groups, see \cite{ese}. Dynamical is a Greek word which
starts with $\delta$ and $\upsilon$, the notation $\du$ is chosen
to reflect this.

\subsubsection{}

When one starts composing elliptic stable envelopes, then the 
following formula is useful: 
\begin{equation}
\cS = \Theta(T^{1/2}_\bX) \quad \Rightarrow
\quad
\cS_\bA = \Theta(T^{1/2}_{\bX^\bA}) \otimes \Theta(-\du)
\,. \label{cSbAdu}
\end{equation}
Thus the twist
by $\Theta(\du)$ becomes the dynamical shift which is an important
phenomenon in the theory of elliptic quantum group. To set it up
correctly, it is useful to have the following
formula for $\Theta(\du)$ in terms of the K\"ahler line bundles. 

Let $\{t_i\}\subset \cha(\bT/\bA)$ be a set of coordinates on
$\bT/\bA$. By hypothesis, there exist nonunique $\upsilon_i \in
K_\bT(\bX^\bA)$ such that 
$$
\du = \sum (t_i-1) \upsilon_i \,. 
$$
Therefore
\begin{equation}
  \label{Thdyn}
  \Theta(\du) = \Theta(-{\textstyle \sum} \, t_i \rk \upsilon_i) \otimes
\bigotimes \cU(\upsilon_i,  t_i) \,.
\end{equation}
For our purposes in this
paper it is not important to unpack the bundle $\Theta(\du)$. 

\subsubsection{Example}\label{ex_2} 
Let $\bX=T^* \bP(\C^n)$ with the action of $GL(n) \times \Ct$, where
$\hbar\in \Ct$  scales the cotangent fibers with weight $\hbar^{-1}$.
Let
$$
\bA = \diag(a_1,\dots,a_n) \subset GL(n)\,, 
$$
be a maximal torus and choose $\fC$ so that
$$
a\to 0_\fC \Leftrightarrow \forall i, \,  a_{i}/a_{i+1} \to 0 \,. 
$$
We take the pullback of $T_{\bP(\C^n)}$ as the polarization of $\bX$.

The fixed locus $\bX^\bA=\sqcup F_k$ consists of $n$ isolated points --- coordinate
lines in $\C^n$. They are ordered as follows
$$
F_1 < F_2 < \dots < F_n
$$
in the sense that each $F_i$ lies in the closure of the attracting
manifold of $F_{i+1}$.

The restriction of the tangent bundle to $k$-th point
has the character
$$
T\bX\big|_{F_k} = \sum_{i\ne k}  \left ( \frac{a_i}{a_k} +
  \frac{a_k}{\hbar a_i} \right) \,, 
$$
in which the terms without $\hbar$ give the polarization. Therefore
$$
\du \big|_{F_k} = (\hbar^{-1}-1) \sum_{i < k}   \frac{a_k}{a_i} \,. 
$$
Other choices of $\fC$ will give a different order $>_{\fC}$ on the set
of fixed components $F_k$, and the general formula is
$$
\du_\fC \big|_{F_k} = (\hbar^{-1}-1) \sum_{F_i <_{\fC} F_k}   \frac{a_k}{a_i} \,. 
$$
% We can choose
% $$
% t= \hbar^{-1} \,, \quad \det \upsilon= a_k^{k-1} / (a_1 \cdots
% a_{k-1}) \,. 
% $$ы

% and then \eqref{Thdyn} specializes the identity
% $$
% \prod_{i<k}
% \frac{\vth(t  a_k/a_i)}{\vth(a_k/a_i) \vth(t)} = \frac{\vth(t \det
%   \upsilon)}{\vth(\det \upsilon) \vth(t)} \times \textup{elliptic
%   function} \,. 
% $$

\subsection{Resonant locus}
\label{sec:resonant-locus}

\subsubsection{}

Elliptic stable envelopes are sections of the form \eqref{[Attr]},
except they have poles for certain resonant values of the
parameters. The resonant locus $\bDel$ is defined as follows.

\subsubsection{} 

Recall that we have maps 
\begin{equation}
\Ell_\bT(\bX) \xleftarrow{\,\,\, \iota \,\, }
\Ell_\bT(\bX^\bA) 
\xrightarrow{\,\, p  \,\,\, }\cE_\bT \xrightarrow{\,\, \phi  \,\,\, }
\cE_{\bT/\bA}
\,,\label{ipphi}
\end{equation}
which correspond to
\begin{align*}
  \label{eq:9}
  \iota &= \textup{inclusion $\bX^\bA \to \bX$}\,,\\
  p &= \textup{map to a point $\bX^\bA \to \pt$}\,,\\
  \phi &= \textup{quotient $\bT \to \bT/\bA$} \,. 
\end{align*}

\subsubsection{}
Let $F_j < F_i$ be a pair of components of $\bX^\bA$ that are comparable in the
partial order from Section \ref{s_partial}
and consider the following special case of \eqref{pullA}
\begin{equation}
  \label{pullA2}
  \xymatrix{
    \Ell_\bT(F_j \times F_i) \ar[rr]^{\phi} \ar[rrd]^\psi 
    \ar[d]_{p} && \Ell_{\bT/\bA}(F_j \times F_i)
    \ar[d]^{p} \\
    \cE_\bT \ar[rr]^{\phi} && \cE_{\bT/\bA}
    }\,, 
  \end{equation}
  in which $\psi$ is the diagonal is the commuting square. 

\subsubsection{}
Let $\cS$ be an attractive line bundle on $\bX$ and recall that in
\eqref{cSbA} we have defined a line bundle $\cS_\bA$ on $X^\bA$ with
$\deg_\bA \cS_\bA = 0$. 

\begin{Definition}
  The resonant locus $\bDel$ is the union of
  $$
  p \left(\supp \Rd \phi_*  \left(\cS_{\bA,F_j} \boxtimes \left(
      \cS_{\bA,F_i} \right)^{-1}  \right) \right) \subset \cE_{\bT/\bA}
  $$
  over all pairs $F_j < F_i$  of components of the fixed locus. The
  complement of $\bDel$ is called the nonresonant set. We use the same
  terms for the preimages of $\bDel$ and its complement in
  $\Ell_\bT(\bX)$. 
\end{Definition}

\begin{Definition}
  An attractive line bundle is called nondegenerate if the nonresonant
  set is open and dense in $\Ell_\bT(\bX)$. 
\end{Definition}

\noindent 
Evidently,
$$
\supp \Rd \psi_*  \left(\cS_{\bA,F_j} \boxtimes \left(
    \cS_{\bA,F_i} \right)^{-1}  \right)  \subset \bDel \,.
$$

\subsubsection{}
Let $\cG$ be a coherent sheaf on $\Ell_\bT(\bX)$ and let
  $$
  i_{\bDel} : \Ell_\bT(\bX)_{\textup{nonresonant}} \to \Ell_\bT(\bX)
  $$
  be the inclusion of the nonresonant set. We define
  $$
  \cG(\infty \bDel) =  i_{\bDel,*} \,  i_{\bDel}^* \, \cG \,.
  $$
  Informally, these are sections of $\cG$ with poles of arbitrary
  order along $\bDel$. 
As we will see, elliptic stable envelopes will have such poles.

\subsubsection{}
Note that $\phi$ is an $\cE_\bA$-fibration and that the line bundle 
  $\cS_{\bA,F_j} \boxtimes \left(
      \cS_{\bA,F_i} \right)^{-1}$  has degree $0$ along the fibers of $\phi$. Therefore, the
following general statement can be used to bound $\bDel$.

Abstractly, let $\bphi: \cA \to \cB$ be a Abelian variety over a base
scheme $\cB$ and let $\cL$ be a line bundle on $\cA$ which is algebraically
equivalent to zero on fibers of $\bphi$. 

If $\cB=\Spec \Bbbk$, where $\Bbbk$ is a field, then
\begin{equation}
\Rd \bphi_* \cL = 0  \Leftrightarrow \cL \ne \cO_\cA \,.\label{vanishA}
\end{equation}
By semicontinuity of cohomology, this implies the following

\begin{Lemma}
If $\cL \ne \cO_\cA$ over the generic point of 
every component of $\cB$ then
$$
\cB \setminus \supp \Rd \bphi_* \cL \subset \cB
$$
is an open dense subset. 
\end{Lemma}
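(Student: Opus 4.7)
The plan is to exploit the sharp fiberwise dichotomy already provided by \eqref{vanishA} together with standard upper semicontinuity of cohomology for proper flat morphisms. Since $\cL$ is algebraically equivalent to zero on every fiber $\cA_b$, the result \eqref{vanishA} applied over the residue field at $b$ yields the following dichotomy: either $\cL|_{\cA_b} \simeq \cO_{\cA_b}$, in which case $h^0(\cA_b, \cL|_{\cA_b}) \ne 0$, or $\cL|_{\cA_b}$ is nontrivial, in which case $H^i(\cA_b, \cL|_{\cA_b}) = 0$ for every $i$.

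The main step is to pass from this pointwise statement to an open dense subset of $\cB$. Let
\[
U = \{b \in \cB : h^0(\cA_b, \cL|_{\cA_b}) = 0\} \subset \cB.
\]
Upper semicontinuity of cohomology, which applies because $\bphi$ is proper and $\cL$ is locally free, shows that $U$ is open. In view of the dichotomy, the hypothesis $\cL|_{\cA_\eta} \ne \cO_{\cA_\eta}$ at the generic point $\eta$ of every component of $\cB$ is exactly the statement that $\eta \in U$. Hence $U$ is open and contains the generic point of every component, so it is dense.

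To conclude, on $U$ the dichotomy forces $H^i(\cA_b, \cL|_{\cA_b}) = 0$ for every $i$ and every $b \in U$. Cohomology and base change, in the form of Grauert's theorem applied to the locally constant zero function, then yields $\Rd \bphi_* \cL |_U = 0$. Thus $U \subset \cB \setminus \supp \Rd \bphi_* \cL$, and the latter is therefore an open dense subset of $\cB$.

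The only point requiring any care is the invocation of cohomology and base change, but since $\bphi$ is an abelian scheme (hence proper and smooth) and $\cL$ is a line bundle, the classical forms of these theorems apply directly with no additional hypotheses on $\cB$. I expect no real obstacle.
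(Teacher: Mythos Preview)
Your proof is correct and follows exactly the approach the paper indicates: the paper's entire argument is the single phrase ``By semicontinuity of cohomology, this implies the following'' preceding the Lemma, and your proposal is a careful unpacking of that phrase via upper semicontinuity of $h^0$ plus cohomology and base change. One minor remark: invoking Grauert's theorem tacitly assumes $\cB$ is reduced, but the conclusion $\Rd\bphi_*\cL|_U=0$ holds without that hypothesis by the usual descending induction on $i$ using the base change theorem and Nakayama, so this is not a genuine gap.
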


\subsubsection{Example}\label{ex_3}
  In the situation of Example \ref{ex_2}, let $\cO(1) \in \Pic_\bT(\bX)$
  have the standard linearization, with respect to which 
  $$
  \weight \cO(1) \big|_{F_k} = a_k^{-1} \,.
  $$
  We enlarge the base $\bB$ by pullback via 
  \begin{equation}
    \bB_{\textup{new}} = \bB_{\textup{old}} \times E_z \to
    \bB_{\textup{old}}  \label{newS} 
  \end{equation}
  and take
  $$
  \cS= \Theta(T^{1/2})\otimes \cU(\cO(1),z)
  $$
  where $\cU$ is defined in  \eqref{defcU}. We have 
  \begin{align*}
   \cU(\cO(1),z)\big|_{\Ell_\bT(F_k)} &= \Theta((z-1)(a_k^{-1}-1)) \,,
    \\
    \Theta(-\du) \big|_{\Ell_\bT(F_k)} &=
  \Theta(-(\hbar^{-1}-1) (a^{\mu_k}-1)) 
  \, ,
  \end{align*}
  where
  $$
  \mu_k = ( \underbrace{-1, \dots, -1}_\textup{$k-1$ terms}, k-1, 0,
  \dots) \,. 
  $$
  Therefore, for $i>j$, 
  $$
  \cS_{\bA,F_j} \boxtimes \left(
      \cS_{\bA,F_i} \right)^{-1}  = \Theta((z-1)(a_j/a_i-1)-(\hbar^{-1}-1)  (a^{\mu_j-\mu_i}-1)
 \,.
  $$
This has a chance to be trivial along the $\cE_\bA$-fibers only if the
characters $a_j/a_i$ and $a^{\mu_j-\mu_i}$ are dependent in
$\cha(\bA)$, which happens for 
$j=i-1$ with
$$
a^{\mu_{i-1}-\mu_i} =(a_{i-1}/a_i)^{i-1} \,.
$$
Then we get 
 $$
 \cS_{\bA,F_{i-1}} \boxtimes \left(
      \cS_{\bA,F_i} \right)^{-1} = \Theta((z h^{1-i}-1) (a_{i-1}/a_i -1)) \,.
 $$
 Thus
 $$
 \bDel= \Theta\left(\sum_{i=1}^n z \hbar^{1-i}\right) \,. 
 $$
 In English, this is the locus where
 $$
 z\in \{1,\hbar,\dots, \hbar^{n-1}\}
 $$
 if we think of these as coordinates on $\cE_{\bT/\bA} \times E_z$. 
 Compare this with the poles in $z$ in the explicit formula for
 elliptic stable envelopes for $\bX=T^* \bP(\C^n)$ discussed in
 Section 3.4 of \cite{ese}.

\subsubsection{}

\begin{Proposition}\label{p_newS} 
Every line bundle $\cS$ can be made nondegenerate if one enlarges the
base as in \eqref{newS}. 
\end{Proposition}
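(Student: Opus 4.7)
The plan is to build a nondegenerate twist $\cS' = \cS\otimes \cU(L,z)$ of $\cS$ on the enlarged base $\bB_{\textup{new}} = \bB_{\textup{old}}\times E_z$, where $L$ is a $\bT$-equivariant ample line bundle on $\bX$ (produced by the linearization of the quasi-projective embedding from Section \ref{s_quasiprojective_X}) and $z$ is the new coordinate on $E_z$. The point of introducing $z$ is to give ourselves a free parameter that can move $\cU(\cdot,z)$ away from any prescribed codimension-one degeneracy locus on $\cE_{\bT/\bA}$.

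First I would compute the effect of the twist on the data entering the definition of $\bDel$. From \eqref{cSbA} the twist modifies $\cS_\bA$ by $\cU(\iota^*L,z)$, so for each comparable pair $F_j<F_i$ the relevant degree-zero line bundle on $\Ell_\bT(F_j\times F_i)$ becomes
\[
\left(\cS_{\bA,F_j}\boxtimes \cS_{\bA,F_i}^{-1}\right)\otimes\cU(L|_{F_j}-L|_{F_i},z).
\]
Set $w_{ji}=L|_{F_j}|_\bA - L|_{F_i}|_\bA\in\cha(\bA)$. Over a point of $\cE_{\bT/\bA}\times E_z$ with generic $z$, the extra factor restricts on the corresponding $\cE_\bA$-fiber of $\phi$ to the degree-zero line bundle on $\cE_\bA$ associated with $w_{ji}$ and translated by $z$; provided $w_{ji}\ne 0$, this is non-trivial on the generic fiber, and hence so is the full difference bundle. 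The preceding Lemma then gives an open dense complement of $\supp\Rd\phi_*$ for that pair, and intersecting the finitely many resulting open dense subsets over all comparable pairs yields the nonresonant locus for $\cS'$.

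The crux, and the main obstacle, is verifying $w_{ji}\ne 0$ in $\cha(\bA)$ for every comparable pair $F_j<F_i$. For a direct link $F_j\subset\overline{\sAttr(F_i)}$, choose a one-parameter $\bA$-subgroup whose generic orbit closure meets both $F_i$ and $F_j$; on the resulting proper $\bA$-invariant rational curve, $L$ has strictly positive degree by ampleness, and standard theory of $\bA$-equivariant line bundles on $\bP^1$ shows that the two endpoint weights of $L$ differ by a strictly positive multiple of the attracting tangent weight, which lies in $\fC$. Chaining such links for the general partial order \eqref{part_order} expresses $w_{ji}$ as a positive integer combination of weights in $\fC$, hence non-zero. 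A subtle but essentially routine point is that attracting orbits are only locally closed in $\bX$, so to produce genuinely proper curves one passes to the closed singular subvariety $\fAttr\subset\bX\times\bX^\bA$ of \eqref{fAttr}, or equivalently compactifies $\bA$ by the toric variety $\bAb$ of Section \ref{s_fC}; the pullback of the ample $L$ still delivers the required positivity on every irreducible component of the chain.
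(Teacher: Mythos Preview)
Your proof is correct and follows essentially the same approach as the paper: twist by $\cU(L,z)$ for an ample $L$, observe that the difference bundle acquires the factor $\cU(L|_{F_j}-L|_{F_i},z)$, and use that the $\bA$-weights of an ample line bundle differ on comparable components to conclude nontriviality for generic $z$. The paper separates your inline argument for $w_{ji}\ne 0$ into its own statement (Lemma~\ref{l_weight}), proved via equivariant localization on chains of attracting $\sigma$-orbit closures for a generic cocharacter $\sigma\in\fC$; your degree computation on the rational curves is the same content in slightly different language. One small slip: you write ``weights in $\fC$,'' but $\fC\subset\fa$ while the weights live in $\fa^*$; you mean attracting weights, i.e.\ weights positive on $\fC$.
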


\begin{proof}
  Let $\cO(1)\in \Pic_\bT(\bX)$ be an ample line bundle. Take the
  pullback of $E$ to the new base \eqref{newS}  and consider
  $$
  \cS_\textup{new} = \cS_\textup{old} \otimes \cU(\cO(1), z) \,. 
  $$
  This reduces to old line bundle over the origin of the 
  $E_z$-factor.

 By construction, this twists $\cS_{\bA,F_{i-1}} \boxtimes \left(
      \cS_{\bA,F_i} \right)^{-1}$ by 
 \begin{equation}
  \label{Rpn}
   \cU\left(\cO(1)\big|_{F_j} - \cO(1)\big|_{F_i} ,z\right) \,. 
 \end{equation}
In Lemma \ref{l_weight} below, we check that $\cO(1)$ has different $\bA$-weights
  at $F_i$ and $F_j$ provided $F_j < F_i$. Therefore the line bundle \eqref{Rpn}
  is nonconstant along the $E_z$-direction and, in particular,
  nontrivial over the generic point of every component of
  $$
  \Ell_\bT(\bX)_\textup{new} =  \Ell_\bT(\bX) \times_\bB
  \bB_\textup{new}
  \,.
  $$
  \end{proof}

\subsubsection{}

\begin{Lemma}\label{l_weight} 
Suppose $F_1$ and $F_2$ are two components of the fixed locus such
that $F_1 > F_2$ and suppose $\cL \in \Pic_\bA(\bX)$ is ample. Then
\begin{equation}
\textup{weight} \, \cL \big|_{F_1} - \textup{weight} \, \cL\big|_{F_2}
>_{\fC} 0\label{weightL}
\end{equation}
where $>_{\fC} 0$ means that it is positive on the 
interior of $\fC$ as a linear function on \eqref{LieR}. 
\end{Lemma}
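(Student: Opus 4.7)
The plan is to reduce to a single immediate covering relation in the partial order, fix one cocharacter $\lambda$ in the interior of $\fC$, and then analyze weights of homogeneous coordinates in a projective embedding given by $\cL$, in the style of Hilbert-Mumford.

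By \eqref{part_order} the partial order is the transitive closure of $F \le F'$ whenever $F \subset \overline{\sAttr(F')}$, so I can choose a chain $F_1 = G_0 > G_1 > \cdots > G_m = F_2$ of immediate covers. The difference $\weight\,\cL|_{F_1} - \weight\,\cL|_{F_2}$ telescopes over this chain, and a finite sum of linear functions positive on $\fC^{\circ}$ is again positive on $\fC^{\circ}$, so it suffices to handle a single immediate cover $F_2 \subset \overline{\sAttr(F_1)}$ with $F_1 \ne F_2$. A linear function $f$ is positive on $\fC^{\circ}$ exactly when $f(\lambda) > 0$ for every $\lambda \in \fC^{\circ}$, so I fix an arbitrary such $\lambda$; being in the interior of $\fC$, it is regular with respect to the $\bA$-weights appearing in $N_{\bX/\bX^\bA}$, which gives $\bX^\lambda = \bX^\bA$ and ensures the attracting manifolds for $\lambda$ coincide with those for $(\bA,\fC)$.

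Replacing $\cL$ by a sufficiently high power (which only rescales the inequality), I may assume $\cL$ is very ample, so the complete linear system gives an $\bA$-equivariant embedding $\bX \hookrightarrow \bP(V^*)$ with $V = H^0(\bX,\cL)$. Decompose $V^* = \bigoplus_m W_m$ into $\lambda$-weight spaces. A fixed point $p$ on a component $F$ maps to the line $(\cL|_p)^* \subset W_{-u_F}$, where $u_F$ denotes the value of the $\bA$-character of $\cL|_F$ on $\lambda$; hence $F \subset \bP(W_{-u_F})$. The description of attracting manifolds in $\bP(V^*)$ shows that any $x \in \sAttr(F_1)$ lifts to some $\xi \in V^*$ with $\xi_m = 0$ for $m < -u_{F_1}$ and $[\xi_{-u_{F_1}}] \in F_1$. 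Taking closure preserves this weight bound, so any $y \in F_2 \cap \overline{\sAttr(F_1)}$ lies in $\bP\bigl(\bigoplus_{m \ge -u_{F_1}} W_m\bigr)$; combined with $y \in \bP(W_{-u_{F_2}})$, this yields the weak inequality $u_{F_2} \le u_{F_1}$.

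The main obstacle is upgrading this to strict inequality. Suppose for contradiction that $u_{F_1} = u_{F_2}$ and pick $y \in F_2$ with $y = \lim x_n$, $x_n \in \sAttr(F_1)$. Lift $x_n$ to $\xi_n \in V^*$ and choose scalars $\mu_n$ so that $\mu_n \xi_n \to \xi_y$, where $\xi_y \in W_{-u_{F_1}}$ represents $y$. Projecting onto the weight $-u_{F_1}$ component, $\mu_n (\xi_n)_{-u_{F_1}} \to \xi_y \ne 0$, so $[(\xi_n)_{-u_{F_1}}] \to y$ in $\bP(W_{-u_{F_1}})$. By construction $[(\xi_n)_{-u_{F_1}}] = \lim_{t \to 0} \lambda(t) \cdot x_n \in F_1$, and $F_1$ is closed in $\bX$ as a connected component of the closed subvariety $\bX^\bA$. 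The key subtlety is that the limit $y$, a priori only in the closure of $F_1$ inside $\bP(V^*)$, must actually lie in $F_1$: this holds because $y \in \bX$ and $F_1$ is closed in $\bX$, so $y \in \overline{F_1}^{\,\bX} = F_1$. Thus $y \in F_1 \cap F_2$, contradicting $F_1 \ne F_2$.
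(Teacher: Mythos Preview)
Your proof is correct, but the route differs from the paper's. Both arguments begin the same way: fix a cocharacter in the interior of $\fC$ and reduce to a chain connecting $F_1$ to $F_2$. After that they diverge. The paper views the chain as a chain of closures of one-dimensional $\sigma$-orbits joining consecutive fixed components, and then computes the $\cL$-degree of each such curve by $\sigma$-equivariant localization: the degree equals the difference of the $\sigma$-weights of $\cL$ at the two endpoints, and ampleness of $\cL$ forces each of these to be strictly positive. Telescoping gives the result in three sentences.

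You instead pass to a very ample power, embed $\bX$ equivariantly in $\bP(V^*)$, and run a Hilbert--Mumford-style argument on the weight filtration of $V^*$: the attracting set of $F_1$ lands in the sublinear space of weights $\ge -u_{F_1}$, and the strictness is obtained by a limit argument showing that equality of weights would force $F_1$ and $F_2$ to intersect. This is more elementary in that it avoids the localization formula entirely and makes the strict inequality fully explicit, at the cost of being longer. The paper's version is quicker but leaves to the reader the geometry of the orbit closures in the chain (in particular, that they are curves on which the degree computation makes sense); your version trades that for a direct linear-algebra argument in the ambient projective space.
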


\begin{proof}
 Let 
$$
\sigma: \Ct \to \bA
$$
be a generic cocharacter in the interior of $\fC$. 
Since $F_i > F_j$, these two components are connected by a chain of closures of
attracting $\sigma$-orbits. Computing the $\cL$-degree of these orbits
by $\sigma$-equivariant localization, we find
$$
\left\langle \textup{weight} \, \cL \big|_{F_1} - \textup{weight} \,
\cL\big|_{F_2}, \sigma \right\rangle > 0 \,.
$$
\end{proof}

\subsection{Definition of $\Stab$}

\subsubsection{}

In addition to the set $\Attrc$ defined \eqref{Attrc}, we introduce
\begin{equation}
\Attrl= \bigcup_{F_j <  F_i} \sAttr(F_j) \times F_i \,.\label{Attrl}
\end{equation}
Note that the map $\jmath$ in \eqref{corrM} is proper on the
complement of $\Attrl$ and thus the section $[\Attr]$ in 
\eqref{[Attr]} is defined there. 

\subsubsection{}

\begin{Definition}
Let $\cS$ be a attractive line bundle for a given choice $\fC$ of
attracting directions. The elliptic stable envelope for $\cS$ is a
section
\begin{equation}
  \Stab : \cO_{\cE_\bT}  \to \cS \boxtimes \left(\cS_\bA\right)^\dd
  (\infty \bDel)
  \,. 
  \label{Stab}
  \end{equation} 
which is supported on
$$
\fAttr \subset \bX \times \bX^\bA
$$
and equals $[\Attr]$ on the complement of $\Attrl$. 
\end{Definition}

\subsubsection{}
In terms of algebraic geometry of the scheme $\Ell_\bT(\bX)$, stable
envelopes solve an interpolation problem: they take a given value
modulo the ideal $\cI_{<}$ which is the kernel of the restriction map
$$
0 \to \cI_{<}  \to \cO_{\Ell_{\bT}(\bX \times \bX^\bA)} \to
\cO_{\Ell_{\bT}(\bX \times \bX^\bA  \setminus \Attrl)} \,. 
$$
Existence and
uniqueness in interpolation of sections of a line bundle $\cL$ requires its degree
$\deg \cL$ to satisfy a certain balance: larger degree makes existence
easier and uniqueness harder, and vice versa. Our Definition
\ref{d_attr} provides the right balance. 

\subsubsection{}

The main goal of this paper is to give a direct proof of the following

\begin{Theorem}\label{t1} 
 Elliptic stable envelopes exist and are unique. In fact, they are
 unique among correspondences supported on the set
 $\Attrc$ defined in \eqref{Attrc}. 
\end{Theorem}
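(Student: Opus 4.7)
The plan is to prove Theorem \ref{t1} by induction on the partial order of Section \ref{s_partial} on the components $\{F_i\}$ of $\bX^\bA$, handling uniqueness first and then existence. In both parts, I fix a component $F_i$ and analyze the restriction of a candidate $\Stab$ to the slice $\bX \times F_i$, where it is supported on $\bigcup_{F_j \le F_i}\sAttr(F_j)$, and I then peel off the strata $\sAttr(F_j)\times F_i$ from the outside in using the homotopy equivalence $\pi$ of \eqref{corrM}.

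For uniqueness, let $\Delta = \Stab_1 - \Stab_2$. Since both candidates agree with $[\Attr]$ off $\Attrl$, the section $\Delta$ vanishes on the complement of $\Attrl$. Assuming inductively that $\Delta$ has already been shown to vanish on $\bX \times F_k$ for all $F_k < F_i$, the support of $\Delta|_{\bX \times F_i}$ is contained in $\bigcup_{F_j < F_i}\sAttr(F_j)$; choose a maximal such $F_j$. Restricting to the open part of the stratum $\sAttr(F_j) \times F_i$ and using $\pi$, the section $\Delta$ becomes a section on $\Ell_\bT(F_j \times F_i)$ of the line bundle $\cS_{\bA,F_j}\boxtimes(\cS_{\bA,F_i})^\dd$ (twisted by the appropriate pullback), which by Definition \ref{d_attr} and \eqref{cSbA} has degree zero along the $\cE_\bA$-fibers of $\phi$ in \eqref{pullA2}. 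The very definition of $\bDel$, combined with the vanishing criterion \eqref{vanishA}, forces this section to vanish on the nonresonant locus; because $\Delta$ has poles only along $\bDel$, it vanishes on the whole stratum, and the induction continues.

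For existence, I run the same analysis in the opposite direction, extending the section $[\Attr]$ from $\bX \times \bX^\bA \setminus \Attrl$ inward across the strata $\sAttr(F_j)\times F_i$ in order of increasing $F_i$, and, for fixed $F_i$, from minimal $F_j < F_i$ upward. At each step I would invoke the long exact sequence attached to the ideal sheaf of the stratum; the obstruction to extending across $\sAttr(F_j)\times F_i$ is a section of a quotient of $\Rd\phi_* (\cS_{\bA,F_j}\boxtimes(\cS_{\bA,F_i})^{-1})$, whose support is contained in $\bDel$ by construction. Allowing arbitrary poles along $\bDel$ therefore eliminates the obstruction, while the uniqueness part just proved guarantees that the extension is unique. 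Iterating produces the desired global section \eqref{Stab}, supported by construction in $\fAttr$.

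The main obstacle will be the long exact sequence bookkeeping: one needs to identify precisely which sheaves arise as obstructions at each inductive step, to verify that their supports are swept by the union defining $\bDel$, and to ensure the $\Z$-grading \eqref{cOd} is handled consistently. I expect this to be exactly the content of Section \ref{s_proof1} referenced in the excerpt. Once the correct interplay between $\pi$, $\iota$, $p$, $\phi$ in \eqref{ipphi}--\eqref{pullA2} and the attractivity degree balance has been established, the induction steps themselves reduce to the elementary abelian-variety vanishing in \eqref{vanishA}.
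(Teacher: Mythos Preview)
Your approach is essentially the paper's: fix a component of $\bX^\bA$ in the second factor, peel off the strata $\sAttr(F_j)$ in the first factor one at a time using the long exact sequence of the pair, and kill the obstruction term by the degree-zero property of $\cS_\bA$ together with the definition of $\bDel$. Two points are worth correcting.

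First, the paper does not separate uniqueness and existence. After showing that the connecting maps in the long exact sequence vanish (via an injectivity lemma for $\jb\pf\pi^*$, which you would also need), it obtains a \emph{short} exact sequence whose first term, after tensoring with $\cS\boxtimes(\cS_\bA)^\dd$, has all of $\Rd\psi_*$ supported in $\bDel$. This directly yields an \emph{isomorphism}
\[
H^0\bigl(\bX_{<i}\times F_0,\;\cS\boxtimes(\cS_\bA)^\dd(\infty\bDel)\bigr)\;\xrightarrow{\ \sim\ }\;H^0\bigl(\bX_i\times F_0,\;\cS\boxtimes(\cS_\bA)^\dd(\infty\bDel)\bigr),
\]
giving existence and uniqueness in one stroke. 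Your outer induction on the second-factor component $F_k<F_i$ is unnecessary: the slices $\bX\times F_i$ are disjoint and can be treated independently.

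Second, the support on $\fAttr$ does \emph{not} come ``by construction''. The induction only produces a class supported on $\Attrc$; the paper obtains the sharper support on $\fAttr$ at the very end, by applying the uniqueness statement to the open subvariety $\bX\setminus\fAttr$.
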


The proof of
uniqueness given in
Section 3.5 of \cite{ese} adapts to the setup of the present
paper. Existence of elliptic stable envelopes was shown in \cite{ese} for
Nakajima varieties using global techniques, namely abelianization
\cite{Shen}.
That line of reasoning has
the advantage of giving explicit formulas, see examples in \cite{ese}
and \cite{SmirHilb}. It also has its
limitations, which is why it is good to have an inductive (and, in that
sense, local) argument for existence. 

\subsubsection{}

We refer to \cite{ese} for examples of applications of elliptic stable
envelopes in enumerative geometry and geometric representation
theory. See also \cite{AO2} for their interpretation in mathematical
physics. 

% A few remarks about the definition:
% \begin{itemize}
% \item interpolation problem
% \item balance of degrees for $\cL$
% \item $\Theta(T^{1/2})$ because Lagrangian
% \item features easy to see in specific examples
% \item $(-1)^{\rk T^{1/2}_{<0}}$ 
% \end{itemize}

\subsection{Proof of Theorem \ref{t1}} \label{s_proof1}

\subsubsection{}

Since long exact sequence of cohomology will be needed, we restore the
cohomological grading in $\cOd$ and $\cSd$. 

\subsubsection{} 

Since $\bX^\bA$ is disconnected, we may work with one component at a
time. We fix one such component $F_0$.

\subsubsection{}

Choose an arbitrary refinement of the partial order
\eqref{part_order} to a total order
and define 
$$
Y_i = \bigcup_{F_j  \le F_i} \sAttr(F_j)\,,  \qquad  Y_{<i} = \bigcup_{F_j <
  F_j} \sAttr(F_j)\,. 
$$
We denote by
$$
\bX_i = \bX \setminus Y_i \,, \quad \bX_{<i} = \bX \setminus Y_{<i}
$$
the complements of these sets. The sets $\bX_{<i}$ form an increasing
sequence of open set, eventually covering all of $\bX$.

\subsubsection{}

By definition, 
$$
\Stab\big|_{\bX_{<0} \times F_0} = [\Attr] \,. 
$$
The inductive construction of
stable envelopes refers to extending to all $\bX_{<i} \times
F_0$ by decreasing induction in $i$.

\subsubsection{} 

In one step of this induction, we abbreviate
$$
F = F_i
$$
and we redraw the diagram \eqref{corrM} as follows
\begin{equation}
  \label{corrM2}
  \xymatrix{
   \bX_{<i} \times F \ar[d]_{\forp_1} && \sAttr(F) \ar[dll]_\jb
   \ar[ll]_{\jmath} \ar[d]^\pi\\
   \bX_{<i}   && F \ar[ll]_\iota\,,  
    } 
  \end{equation}
  %
% %
% \begin{equation}
%   \label{embY}
% F_i \xleftarrow{\,\, \pi \,\,} \sAttr(F_i) \xrightarrow{\,\, \jb
%   \,\,}  \bX_{<i}  
% \end{equation}
% %
in which
$$
\jb = \forp_1 \circ \jmath
$$
is a proper embedding.

Let $N=N_{\bX/F}$ be the normal bundle to $F$ in $\bX$ and let
$N_{<0}$ denote the repelling part of this bundle. The normal bundle
to $\jb$ has the form $\pi^* {N_{<0}}$ and thus we have the map 
\begin{equation}
\jb\pf \pi^*: \Thetad(-N_{<0}) \to \cOd_{\Ell_\bT(\bX_{<j})}  \,. \label{pijmath}
\end{equation}

\subsubsection{}

\begin{Lemma}
The map of sheaves \eqref{pijmath} is injective. 
\end{Lemma}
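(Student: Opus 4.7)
The plan is to post-compose the map \eqref{pijmath} with the restriction $\iota^*$ from $\bX_{<i}$ back to $F$, and to identify the resulting composition with multiplication by the elliptic Euler class of $N_{<0}$, which is a non-zero-divisor.

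First, I would observe that $F$ embeds into $\sAttr(F)$ as the zero section of the affine fibration $\pi$: this gives a map $\widetilde{\iota}\colon F \hookrightarrow \sAttr(F)$ with $\pi\,\widetilde{\iota} = \id_F$ and $\jb\,\widetilde{\iota} = \iota$. The diagram
$$
\xymatrix{
F \ar[r]^{\widetilde{\iota}} \ar[d]_{\id} & \sAttr(F) \ar[d]^{\jb} \\
F \ar[r]^{\iota} & \bX_{<i}
}
$$
is Cartesian, since the $\bA$-fixed points of $\sAttr(F)$ are exactly $F$. The intersection is not transverse: we have $N_{\jb} = \pi^* N_{<0}$, which restricts along $\widetilde{\iota}$ to $N_{<0}$, whereas $N_{\id} = 0$. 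The excess normal bundle on $F$ is therefore $N_{<0}$.

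Next, I would apply the excess intersection formula in equivariant elliptic cohomology to obtain, for any section $s$ of $\Thetad(-N_{<0})$ on $\Ell_\bT(F)$,
$$
\iota^*\, \jb\pf\, \pi^*\, s \;=\; e_{<0}\cdot s,
$$
where $e_{<0}$ denotes the elliptic Euler class of $N_{<0}$ — the restriction of the Thom class to the zero section — viewed as a section of $\Thetad(N_{<0})$. Thus $e_{<0}\cdot s$ is a genuine section of $\cOd_{\Ell_\bT(F)}$. This identity can be verified directly in the local model in which $\sAttr(F)$ is identified with the total space of the attracting subbundle $N_{>0}$ over $F$ and $\jb$ factors through a regular closed embedding into $\bX_{<i}$.

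Finally, the class $e_{<0}$ is a product of theta functions in the $\bA$-weights of $N_{<0}$. Each such weight is strictly negative on $\fC$, hence nonzero, so its theta function vanishes only on a proper divisor of $\cE_\bT$; therefore $e_{<0}$ is a non-zero-divisor in $\cOd_{\Ell_\bT(F)}$, and multiplication by $e_{<0}$ is injective on the line bundle $\Thetad(-N_{<0})$. If $\jb\pf\,\pi^*\, s = 0$, applying $\iota^*$ gives $e_{<0}\cdot s = 0$, hence $s = 0$. The main obstacle will be spelling out the excess intersection identity with the precise theta-bundle twists; this reduces to the standard localization computation on the formal neighborhood of $F$ in $\bX_{<i}$.
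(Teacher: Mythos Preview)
Your approach is essentially the paper's: post-compose with $\iota^*$, identify the composite as multiplication by the canonical section $\vth$ of $\Theta(N_{<0})$, and deduce injectivity from the fact that the $\bA$-weights of $N_{<0}$ are nonzero. The one place to tighten is your last step: the implication ``$e_{<0}$ vanishes on a proper divisor of $\cE_\bT$ $\Rightarrow$ $e_{<0}$ is a non-zero-divisor in $\cOd_{\Ell_\bT(F)}$'' is not automatic here, since the paper explicitly makes no formality or odd-vanishing assumptions on $F$. The paper closes this by observing that $\cOd_{\Ell_\bT(F)}$ is pulled back along $\phi\colon \Ell_\bT(F)\to \Ell_{\bT/\bA}(F)$, so any nonzero subsheaf has support equal to a union of $\phi$-fibers, whereas $\{\vth=0\}$ contains no such fiber because each Chern root of $N_{<0}$ restricts nontrivially to $\bA$; you should insert this point in place of the bare ``non-zero-divisor'' claim.
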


\begin{proof}
Consider 
$$
\iota^* \jmath\pf \pi^*:  \Thetad(-N_{<0}) \to  \cOd_{\Ell_\bT(F)} \,. 
$$
This map is multiplication by the canonical section $\vth$ of
$\Thetad(N_{<0})$ that vanishes on the Chern roots of $N_{<0}$,
see \eqref{sTh}. 

The kernel of $\jmath\pf \pi^*$ has to be a subsheaf supported on 
% $$
% D_\Theta=\{s_\Theta=0\} \,, 
% $$
$\{\vth=0\}$ 
but $\cOd_{\Ell_\bT(F)}$ has no such subsheaf. Indeed, since
$\bA$ acts trivially on $F$, $\cOd_{\Ell_\bT(F)}$ is pulled back
via the map $\phi$ in \eqref{pullA}.
On the other hand, $\{\vth=0\}$ contains no fibers of $\phi$ 
since all Chern roots of $N_{<0}$ are
nonzero when restricted to $\bA$, which means that the image of the
map 
$$
c(N_{<0}) : \cE_\bA \to S^{\rk N_{<0}} E
$$
is not contained in the divisor $D_\Theta$ in \eqref{DTh}. 
\end{proof}

\subsubsection{}
Consider the long exact sequence of the pair associate to the
embedding $\jb$ and note that
$$
\bX_i = \bX_{<i} \setminus \sAttr(F)
$$
by construction. 

Since the pushforward maps in this
long
exact sequence are injective, all connecting homomorphism are zero,
and we get the following

\begin{Proposition}
  We have a short exact sequence of sheaves
  \begin{equation}
    \label{shortexact}
    0 \to \Thetad(-N_{<0}) \xrightarrow{\,\, \jb \pf \pi^*\,}
    \cOd_{\Ell_\bT(\bX_{<i})} \to   \cOd_{\Ell_\bT(\bX_i)} \to 0  \,, 
  \end{equation}
  in which the surjection is pullback with respect to the open
  embedding
  $\bX_i \to \bX_{<i}$. 
\end{Proposition}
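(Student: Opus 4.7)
The plan is to derive the sequence from the long exact sequence of the closed-open pair $\sAttr(F)\hookrightarrow \bX_{<i} \hookleftarrow \bX_i$ in $\bT$-equivariant elliptic cohomology, coupled with the injectivity already established in the preceding lemma.

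First, I would write down the long exact sequence of this pair, as provided by the formalism reviewed in Appendix \ref{s_constr}. In each cohomological degree it interlaces the cohomology of $\cOd_{\Ell_\bT(\bX_{<i})}$ with support along $\sAttr(F)$, the sheaf $\cOd_{\Ell_\bT(\bX_{<i})}$ itself, and the pullback $\cOd_{\Ell_\bT(\bX_i)}$ to the open complement. The map from the supported cohomology to $\cOd_{\Ell_\bT(\bX_{<i})}$ is pushforward along $\jb$, the subsequent map is pullback along the open inclusion $\bX_i\hookrightarrow \bX_{<i}$, and the connecting homomorphism goes from $\cOd_{\Ell_\bT(\bX_i)}$ back into the next degree of the supported cohomology.

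Next, I would apply the Thom / Gysin isomorphism for the closed embedding $\jb:\sAttr(F)\hookrightarrow \bX_{<i}$ of a smooth subvariety. Its normal bundle is $\pi^* N_{<0}$, and the affine bundle $\pi$ induces an isomorphism on equivariant elliptic cohomology. Together these identify the supported cohomology with $\Thetad(-N_{<0})$ in such a way that the pushforward map into $\cOd_{\Ell_\bT(\bX_{<i})}$ becomes precisely the arrow $\jb\pf\pi^*$ of \eqref{pijmath}. Since the preceding lemma says this arrow is injective in every degree, the image of each connecting homomorphism lies in the kernel of an injection and must vanish. The long exact sequence therefore splits into short exact sequences in each degree, and reassembling them over $\bullet$ yields the stated exact sequence of $\bullet$-graded sheaves.

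The real obstacle here is foundational rather than computational: one needs the long exact sequence of an open-closed pair and the Thom / Gysin isomorphism available at the sheaf level on $\Ell_\bT(-)$ in equivariant elliptic cohomology, and one must verify that the pushforward they produce literally coincides with the map $\jb\pf\pi^*$ constructed abstractly in \eqref{pijmath}. Granting these inputs from Appendix \ref{s_constr}, the injectivity lemma carries the entire argument, which is why the paper can state the Proposition immediately after the lemma with only a one-line justification.
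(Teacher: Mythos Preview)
Your proposal is correct and matches the paper's argument essentially verbatim: the paper likewise invokes the long exact sequence of the pair $(\bX_{<i},\bX_i)$ associated to the closed embedding $\jb$, identifies the relative term via the Thom isomorphism with $\Thetad(-N_{<0})$, and then uses the injectivity lemma to kill the connecting maps. Your foundational caveat is fair but is exactly what the paper takes as given from its Appendix \ref{s_constr}.
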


{} From this point on, we are interested only in the 0 cohomological
degree part of all sheaves, so we drop the degree grading. 

\subsubsection{}

We now consider the product $\bX_{<i} \times F_0$ and tensor
\eqref{shortexact} with $\cSd \boxtimes
\left(\cSd_\bA\right)^\dd$. We get
  \begin{equation}
    \label{shortexact2}
    0 \to \cS_{\bA, F_i} \boxtimes  \left(\cS_{\bA, F_0}\right)^\dd
    \to \cS \boxtimes \left(\cS_\bA\right)^\dd \Big|_{\bX_{<i} \times
      F_0} \to \cS \boxtimes \left(\cS_\bA\right)^\dd \Big|_{\bX_{i} \times
      F_0}
    \to 0  \,. 
  \end{equation}

\subsubsection{}

  Observe that the second term in
  $$
  \left(\cS_{\bA, F_0}\right)^\dd = \left(\cS_{\bA, F_0}\right)^{-1}
  \otimes \Theta(TF_0)
  $$
  is pulled back via $\phi$. Therefore
  $$
  \supp \Rd \psi_*  \left(\cS_{\bA,F_j} \boxtimes \left(
      \cS_{\bA,F_i} \right)^\dd  \right) \subset \bDel   \,. 
  $$
  This means that \eqref{shortexact2} induces an isomorphism
  $$
  H^0( \bX_{<i} \times
  F_0, \cS \boxtimes \left(\cS_\bA\right)^\dd  (\infty \bDel))
  \to
  H^0( \bX_{i} \times
  F_0, \cS \boxtimes \left(\cS_\bA\right)^\dd  (\infty \bDel)) \,, 
  $$
  which completes the induction step.

  \subsubsection{}
  This proves the existence and uniqueness for stable envelopes as
  correspondences supported on $\Attrc$. To show they are supported on
  $\fAttr$, one applies the uniqueness to the manifold $X \setminus
  \fAttr$.  This finishes the proof. 

  \section{Toric varieties and equivariant K-theory}

  \subsection{Cohomology vanishing}\label{s_Dellam} 

  \subsubsection{}
  Our next goal is to explain how the above line of reasoning may be
  adapted to equivariant K-theory. The first result we need
  is the toric analog of the vanishing \eqref{vanishA}.

  We note that
  the cohomology of the
  sheaves $\cO(\Delta_\lambda)$ that will appear below may be shown to
  vanish 
  in many different ways, including direct \v Cech complex computation
in the style of \cites{Dan,Ful},
or perhaps using the relation of $\cO(\Delta_\lambda)$ to multiplier ideal sheaves on toric
varieties as computed in \cite{Blickle}. Here we use orbifolds.

  \subsubsection{}
  Let $\cR$ be a ring with unit. 
  % For our application, we will take
  % $\cR=K_{\bT/\bA}(F)$, where $F$ is a component of $X^\bA$.
  We denote
  \begin{equation}
  \fa = \cochar(\bA) \otimes_\Z \R \,, \quad \fa^* = \cha(\bA)
  \otimes_\Z \R \,. 
\label{deffa}
\end{equation}
  Let 
  $$
  \Delta = \conv(\{\eta_i\}) \subset \fa^*
  $$
  be a nondegenerate polytope with vertices in the weight lattice of $\bA$. Such
  polytope defines a toric variety $\bbA = \bbA_\Delta$ over $\cR$
  with an equivariant 
  line bundle $\cO(\Delta)$ as follows. 

  \subsubsection{}

  The toric charts $U_{\Delta'} \subset \bbA$ correspond to all
  nonempty faces $\Delta' \subset \Delta$ and
  \begin{equation}
    \Gamma(U_{\Delta'}, \cO(\Delta)) = \bigoplus_{\mu \in
      \Cone_{\Delta'} (\Delta) \cap \cha(\bA)} \cR \, a^\mu
    \,, \label{GammaU} 
  \end{equation}
  where
  $$
  \Cone_{\Delta'} (\Delta)  = \textup{tangent cone to $\Delta$ at
  $\Delta'$}  \, , 
$$
see Figure \ref{fpolytope}. Note, in particular, that $U_\Delta \cong \bA$.

\begin{figure}[!h]
  \centering
  \includegraphics[scale=0.64]{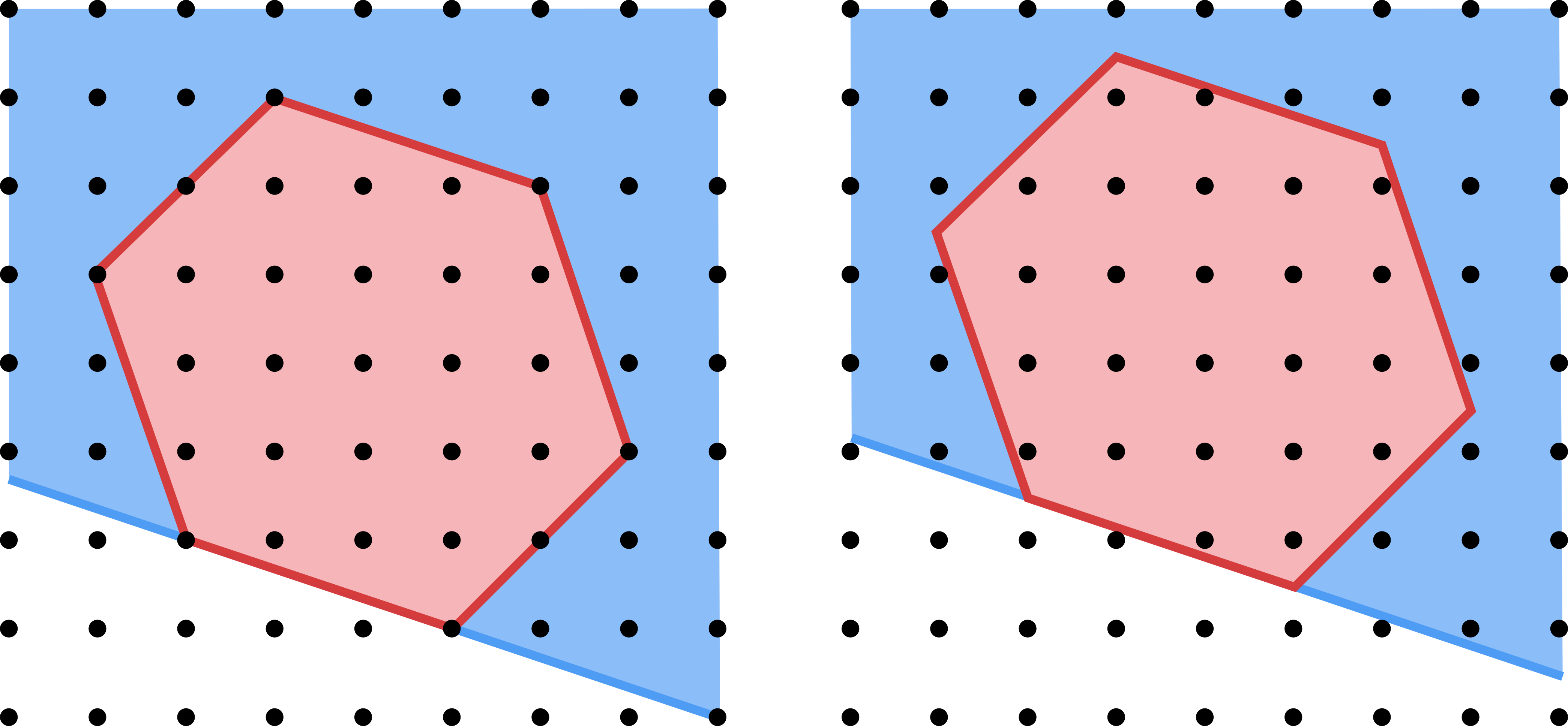}
  \caption{On the left, a lattice polytope with the cone at a
    1-dimensional face shaded blue. On the right, the same after a
    shift by a generic $\lambda$.}
\label{fpolytope}
\end{figure}

\subsubsection{}\label{s_O_Delta_lambda} 

Pick an arbitrary vector
$
\lambda \in  \fa^*
$
and let
$$
\Delta_\lambda = \Delta + \lambda
$$
be the translate of $\Delta$ by the vector $\lambda$. It is a
polytope, but no longer with integral vertices.

The same formula
\eqref{GammaU} defines a sheaf $\cO(\Delta_\lambda)$ on
$\bbA$. Note, however, that this sheaf may fail to be locally
free if $\bbA$ is singular.

\subsubsection{} 

We observe that
  \begin{equation}
    H^0(\bbA, \cO(\Delta_\lambda))= \cP_{\Delta_\lambda,\bA}
      \,, \label{H0D} 
  \end{equation}
  where for any $\Omega \subset \fa^*$ we denote by 
  $$
  \cP_{\Omega,\bA} = 
      \bigoplus_{\mu \in
        \Omega \cap \cha(\bA)} \cR \, a^\mu  \subset \cR[\bA] 
      $$
      the set of polynomials  with Newton polygon inside
      $\Omega$.

      \subsubsection{}
Clearly, there exist a locally finite periodic rational hyperplane
arrangement in $\fa^*$, such that $\cO(\Delta_\lambda)$ does not
change along the strata of this arrangement. Therefore, we may
assume, without loss of generality, that
$$
\lambda \in \fa_\Q^* = \cha(\bA)
  \otimes_\Z \Q \, .
  $$
  In this case, the toric variety $\bbA$ and the sheaf
  $\cO(\Delta_\lambda)$ may
  be defined in one step by
  \begin{equation}
  (\bbA, \cO(\Delta_\lambda)) = \Proj \bigoplus_{n\ge 0} \cP_{n
    \Delta_\lambda,\bA} \,.
\label{Proj1}
\end{equation}

  \subsubsection{}\label{s_orderN} 
  Let $N$ be the order of $\lambda$ in $\fa^*_\Q/\cha(\bA)$ and consider the
  torus $\bA_N$ with character lattice generated by $\lambda$ and
  $\cha(\bA)$. We have
  \begin{equation}
  1 \to \mu_N \to \bA_N \to \bA \to 1 \,, 
\label{bAN}
\end{equation}%
  by construction. Consider
  \begin{align}
  (\bbA_N, \cO(\Delta_\lambda)) &= \Proj \bigoplus_{n\ge 0} \cP_{n
                                  \Delta_\lambda,\bA_N}  \notag \\ 
                                  & \cong  (\bbA_N, \cO(\Delta)\,
                                    a^\lambda)\,, 
\label{Proj2}
\end{align}
where $a^\lambda$ is a character of $\bA_N$.  The following is
immediate

\begin{Lemma}
  We have
  \begin{equation}
    \label{eq:5}
      (\bbA, \cO(\Delta_\lambda)) =  (\bbA_N, \cO(\Delta) \, a^\lambda)
      \Big/ \!\!\!\! \Big/\mu_N \,. 
    \end{equation}
\end{Lemma}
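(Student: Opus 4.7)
The plan is to identify both sides as $\Proj$ of graded rings and observe that the left-hand ring is the $\mu_N$-invariant subring of the right-hand one.

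First I would write out the right-hand side before taking the quotient. By \eqref{Proj2}, $(\bbA_N, \cO(\Delta_\lambda)) = (\bbA_N, \cO(\Delta)\, a^\lambda)$ is the $\Proj$ of the graded $\cR$-algebra whose $n$-th piece is
\[
a^{n\lambda}\cdot \cP_{n\Delta, \bA_N} \;=\; \cP_{n\Delta+n\lambda,\,\bA_N} \;=\; \cP_{n\Delta_\lambda,\,\bA_N},
\]
using that $n(\Delta+\lambda)=n\Delta+n\lambda$ and that $a^{n\lambda}$ is a character of $\bA_N$ since $N\lambda\in\cha(\bA)\subset\cha(\bA_N)$.

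Next I would analyze the $\mu_N$-action. From \eqref{bAN}, restriction of characters gives a surjection $\cha(\bA_N)\twoheadrightarrow\cha(\mu_N)$ whose kernel is precisely $\cha(\bA)$. Hence for any weight $\mu\in\cha(\bA_N)$, the monomial $\cR\cdot a^\mu$ is $\mu_N$-fixed if and only if $\mu\in\cha(\bA)$. Applying this termwise to each graded piece,
\[
\cP_{n\Delta_\lambda,\,\bA_N}^{\mu_N} \;=\; \bigoplus_{\mu\in n\Delta_\lambda\,\cap\,\cha(\bA)}\cR\,a^\mu \;=\; \cP_{n\Delta_\lambda,\,\bA}.
\]
Summing over $n$, the $\mu_N$-invariant subring of the graded algebra defining $(\bbA_N,\cO(\Delta_\lambda))$ is exactly the graded algebra defining $(\bbA,\cO(\Delta_\lambda))$ in \eqref{Proj1}.

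Finally, since $\mu_N$ is a finite group acting on an ample polarization, the GIT quotient is geometric and its homogeneous coordinate ring is the invariant subring:
\[
(\bbA_N, \cO(\Delta)\,a^\lambda)\big/\!\!\!\big/\mu_N \;=\; \Proj\Big(\bigoplus_{n\ge 0}\cP_{n\Delta_\lambda,\,\bA_N}\Big)^{\mu_N} \;=\; \Proj\bigoplus_{n\ge 0}\cP_{n\Delta_\lambda,\,\bA}.
\]
Combined with \eqref{Proj1}, this yields \eqref{eq:5}. The only real point to verify carefully is the equality of characters on both sides (i.e.\ that the natural $\cR$-algebra map commutes with the grading and that the identification $a^{n\lambda}\cP_{n\Delta,\bA_N}=\cP_{n\Delta_\lambda,\bA_N}$ is compatible with the toric restriction maps to all open charts $U_{\Delta'}$ — but this is immediate from \eqref{GammaU}). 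The main subtlety is just tracking the shift by $a^{n\lambda}$ and confirming that no stability conditions need to be imposed because $\mu_N$ is finite.
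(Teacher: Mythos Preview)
Your proof is correct and is precisely the argument the paper has in mind: the paper states the lemma as ``immediate'' from the $\Proj$ descriptions \eqref{Proj1} and \eqref{Proj2}, and you have simply written out the identification of the $\mu_N$-invariant subring of $\bigoplus_n \cP_{n\Delta_\lambda,\bA_N}$ with $\bigoplus_n \cP_{n\Delta_\lambda,\bA}$ that makes this immediate.
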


  \subsubsection{}
  We abbreviate
  $$
  \cO(\Delta_\lambda - \Delta) = \cO(\Delta_\lambda) \otimes
  \cO(\Delta)^{-1} \,.
  $$
  The toric analog of \eqref{vanishA} that we will need is the
  following 
  \begin{Proposition}
    \begin{equation}
    H^i( \bbA,  \cO(\Delta_\lambda - \Delta)) = 
    \begin{cases}
      \cR \, a^\lambda\,, & i=0\,,   \lambda \in \cha(\bA) \,, \\
      0 \,, & \textup{otherwise} \,. 
    \end{cases}
\label{torvan}
\end{equation} %
\end{Proposition}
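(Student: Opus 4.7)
The plan is to use the orbifold presentation \eqref{eq:5} to reduce the computation to cohomology of the structure sheaf on a complete toric variety, twisted only in its equivariant structure.

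First, by the observation in Section \ref{s_O_Delta_lambda}, I may assume $\lambda \in \fa_\Q^*$, so the cover $\bbA_N \to \bbA$ of Section \ref{s_orderN} is available. The identification \eqref{Proj2} shows that the pullback of $\cO(\Delta_\lambda)$ to $\bbA_N$ is $\cO(\Delta) \otimes a^\lambda$, while $\cO(\Delta)$ pulls back to itself. Therefore, as a sheaf on $\bbA$,
$$
\cO(\Delta_\lambda - \Delta) \;=\; \bigl(\cO_{\bbA_N} \otimes a^\lambda\bigr)^{\mu_N},
$$
and since $\mu_N$-invariants correspond to extracting the trivial piece of a $\cha(\mu_N) = \cha(\bA_N)/\cha(\bA)$-grading, this operation is exact and commutes with cohomology:
$$
H^i\bigl(\bbA, \cO(\Delta_\lambda - \Delta)\bigr) \;=\; \bigl(H^i(\bbA_N, \cO_{\bbA_N}) \otimes a^\lambda\bigr)^{\mu_N}.
$$

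Next, since $\Delta$ is nondegenerate, $\bbA_N$ is a complete toric variety, and I would invoke the standard toric vanishing $H^i(\bbA_N, \cO_{\bbA_N}) = 0$ for $i > 0$ together with $H^0(\bbA_N, \cO_{\bbA_N}) = \cR$. This is classical over a field (Danilov) and extends to any Noetherian base ring $\cR$ by flat base change applied to the \v Cech complex of the standard torus-invariant affine cover of $\bbA_N$.

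Finally, taking $\mu_N$-invariants of the one-dimensional $\bA_N$-module $\cR \, a^\lambda$: the character $a^\lambda$ restricts to $\mu_N$ as the class $\lambda \bmod \cha(\bA) \in \cha(\mu_N)$, which is the trivial character precisely when $\lambda \in \cha(\bA)$. This yields the claimed dichotomy. The only delicate points are tracking the equivariant twist $a^\lambda$ correctly through the orbifold quotient, and ensuring toric vanishing is available over an arbitrary Noetherian $\cR$; the interpretation of $\mu_N$-invariants as a graded piece side-steps any issues with characteristics dividing $N$.
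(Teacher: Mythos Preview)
Your proposal is correct and follows the same route as the paper's proof: pass to the cover $\bbA_N$ where $\cO(\Delta_\lambda - \Delta)$ becomes $\cO_{\bbA_N}\,a^\lambda$, invoke the vanishing of higher cohomology of the structure sheaf on a complete toric variety, and descend by taking $\mu_N$-invariants. You are in fact more explicit than the paper on two points it leaves implicit --- that extracting the $\mu_N$-invariant graded piece is exact (hence commutes with cohomology) irrespective of whether $N$ is invertible in $\cR$, and that the toric vanishing holds over an arbitrary base ring --- but the architecture is identical.
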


\begin{proof}
  On $\bbA_N$ we have
  $$
  \cO_{\bbA_N}(\Delta_\lambda-\Delta) = \cO_{\bbA_N} \, a^\lambda \,, 
  $$
  and therefore
  \begin{equation}
    H^i( \bbA_N,  \cO(\Delta_\lambda - \Delta)) = 
    \begin{cases}
      \cR \, a^\lambda\,, & i=0 \\
      0 \,, & i>0 \,. 
    \end{cases}
\label{torvan2}
\end{equation} %
Taking $\mu_N$-invariants concludes the proof. 
  \end{proof}

  % \begin{proof}
%     Let $\Delta' \subset \Delta$ be a face. We denote by
%   $$
%   C^\vee_{\Delta'} \subset \fa
%   $$
%   the dual of $\Cone_{\Delta'}(\Delta)$ translated so that its
%   linear part meets the origin. For instance, $C^\vee_{\Delta} = \{ 0
%   \}$.  The cones $C^\vee_{\Delta'}$ form the fan of $\bbA$.
%   These cones form a closed \v Cech covering of $\fa$. If desired, one
%   can replace it by an open \v Cech covering by taking an
%   $\varepsilon$-neighborhood of all cones in the fan.

%   We compute
%   %
%   \begin{equation}
%     \Gamma(U_{\Delta'}, \cO(\Delta_\lambda-\Delta))_\mu  =
%     \begin{cases}
%       \cR \,, &  (\lambda-\mu)|_{C^\vee_{\Delta'}} \ge 0 \,, \\
%       0 \,, & \textup{otherwise} \,, 
%     \end{cases}
%     \label{GammaU2} 
%   \end{equation}
%  %
%   where the subscript $\mu$ denotes the $a^\mu$-isotypic component and
%   we interpret $\lambda-\mu$ as a linear function on $\fa$.

%   Thus the
%   terms in the \v Cech complex compute
% %
%     \begin{equation}
% \Hd( \bbA,  \cO(\Delta_\lambda - \Delta))_\mu =
%     \begin{cases}
%      \Hd_{\textup{half-space}}(\fa,\cR)\,,   & \lambda \ne \mu  \,, \\
%      \Hd(\fa,\cR) \,,  & \lambda = \mu  \,, 
%     \end{cases}
% \label{torvan2}
% \end{equation}
% %
% where the subscript refers to the cohomology with support in
%   half-space 
%   $\lambda \ge \mu$ and $\cR$ denotes the coefficients of the
%   cohomology. Since all these groups in \eqref{torvan2} vanish, 
%   except
%   $$
%   H^0(\fa,\cR) = \cR\,,
%   $$
%   the proposition follows. 
%  \end{proof}

 \subsubsection{}\label{s_Pnondeg} 
 Let
 $$
 P(a) \in H^0(\bA,\cO(\Delta))
 $$
 be a polynomial with Newton polygon $\Delta$. We will call it
 \emph{nondegenerate} if the 
 coefficients of $P$ corresponding to the vertices of $\Delta$ are
 units in $\cR$.

 Consider the reduction modulo $P$ map 
 \begin{equation}
   \label{modP}
   H^0(\bA,\cO(\Delta_\lambda)) \to
   H^0(\bA,\cO(\Delta_\lambda)\big|_{P=0}) \,. 
 \end{equation}
 The vanishing \eqref{torvan} gives the following interpolation
 property for polynomials with Newton polygon inside
 $\Delta_\lambda$.

 \begin{Proposition}\label{p_int_toric} 
 Assume $P$ is nondegenerate. Then the map \eqref{modP} is
 surjective. It is also injective if $\lambda \notin \cha(\bA)$. If
 $\lambda$ is weight of $\bA$, then the kernel of \eqref{modP} equals $\cR
 a^\lambda P(a)$. 
\end{Proposition}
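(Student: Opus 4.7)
The plan is to deduce this as an immediate consequence of the vanishing \eqref{torvan} by applying cohomology to the tautological Koszul-type short exact sequence
\begin{equation*}
0 \to \cO(\Delta_\lambda - \Delta) \xrightarrow{\ \cdot P\ } \cO(\Delta_\lambda) \to \cO(\Delta_\lambda)\big|_{P=0} \to 0
\end{equation*}
on the toric variety $\bbA$, in which the first map is multiplication by $P$ viewed as a global section of $\cO(\Delta)$, and the quotient is by definition the restriction to the zero scheme of $P$. Once this is in place, Proposition \ref{p_int_toric} is just a transcription of the long exact sequence
\begin{equation*}
0 \to H^0(\bbA, \cO(\Delta_\lambda-\Delta)) \xrightarrow{\ \cdot P\ } H^0(\bbA, \cO(\Delta_\lambda)) \to H^0(\bbA, \cO(\Delta_\lambda)\big|_{P=0}) \to H^1(\bbA,\cO(\Delta_\lambda-\Delta))
\end{equation*}
combined with \eqref{torvan}: the right $H^1$ vanishes unconditionally, giving surjectivity; the left $H^0$ is either $\cR\,a^\lambda$ (if $\lambda\in\cha(\bA)$) or zero, and its image under multiplication by $P$ is precisely $\cR\, a^\lambda P(a)$ or zero, yielding the kernel description and injectivity statement.

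The substantive point to verify is that this is actually a short exact sequence of sheaves. Injectivity of multiplication by $P$ reduces to the claim that $\cO(\Delta_\lambda-\Delta)$ is torsion-free and $P$ is not a zero-divisor. Torsion-freeness follows from the orbifold description \eqref{eq:5}: on the toric cover $\bbA_N$ the sheaf lifts to the honest line bundle $\cO_{\bbA_N}\, a^\lambda$, and taking $\mu_N$-invariants preserves torsion-freeness since $|\mu_N|$ acts as a nonzero scalar on sections. That $P$ is a nonzero-divisor uses nondegeneracy: the unit vertex coefficients of $P$ force $P$ not to vanish at any toric fixed point, hence not identically along any boundary divisor of $\bbA$, so the Cartier divisor $\{P=0\}\subset \bbA$ is contained in the dense torus $\bA$ and the quotient sheaf really is $\cO_{\{P=0\}}$ (trivialized as $\cO_\bA/P$ inside the torus). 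This also justifies identifying $H^0(\bbA,\cO(\Delta_\lambda)|_{P=0})$ with $H^0(\bA,\cO(\Delta_\lambda)|_{P=0})$ since the supports coincide.

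There is no real obstacle in this argument beyond these sheaf-theoretic hygiene checks; the heavy lifting has been done in \eqref{torvan}. The only place where some care is needed is the invocation of the orbifold presentation to cover the case $\lambda\notin\cha(\bA)$, where $\cO(\Delta_\lambda)$ may fail to be locally free on a singular $\bbA$ — here the $\mu_N$-quotient picture of Section \ref{s_orderN} lets us reduce torsion-freeness and the structure of the short exact sequence to the locally free situation on $\bbA_N$, where it is automatic.
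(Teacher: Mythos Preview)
Your approach is essentially identical to the paper's: the paper writes the short exact sequence $0 \to \cO(-\Delta) \xrightarrow{P} \cO \to \cO/P \to 0$, tensors with $\cO(\Delta_\lambda)$, and invokes \eqref{torvan} --- which is exactly your argument, stated more tersely. Your extra care about torsion-freeness of $\cO(\Delta_\lambda)$ (to ensure left-exactness survives the tensoring) is a point the paper leaves implicit.

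One small slip: your claim that $\{P=0\}$ is contained in the open torus $\bA$ does not follow from nondegeneracy. Nondegeneracy prevents $P$ from vanishing \emph{identically} on any boundary divisor, but $\{P=0\}$ can still meet the boundary in higher codimension (e.g.\ on $\bP^1\times\bP^1$ take $P=(1+a_1)(1+a_2)$). This does not affect the exactness argument --- torsion-freeness of $\cO(\Delta_\lambda)$ already gives injectivity of multiplication by $P$ --- but it does mean your identification of $H^0(\bbA,\cO(\Delta_\lambda)|_{P=0})$ with $H^0(\bA,\cO(\Delta_\lambda)|_{P=0})$ is not justified. Fortunately this identification is not actually needed: the proposition is applied later (in Section~\ref{sec:stable-envel-equiv}) with global sections over $\bbA$, and the $\bA$ in the displayed map \eqref{modP} appears to be a typo for $\bbA$.
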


\begin{proof}
  The nondegeneracy of $P$ implies the sequence
  $$
  0 \to \cO(-\Delta) \xrightarrow{\, \, P \, \, } \cO \to \cO/P \to 0
  $$
  of sheaves on $\bbA$ is exact. Tensoring it with
  $\cO(\Delta_\lambda)$ and taking cohomology proves the proposition. 
\end{proof}

 \subsection{Stable envelopes in equivariant K-theory}
\label{sec:stable-envel-equiv}

\subsubsection{}
For simplicity, we assume that $X$ has a polarization $T^{1/2}$ with
respect to $\bA$. 
We refer to \cite{Opcmi} for a general discussion of stable envelopes in
equivariant K-theory in this context. 

In elliptic cohomology, we have the flexibility of twisting the
attractive line bundle $\cS$ by $\cU(L,z)$, where $L$ is 
a line bundle on $X$. We exploited this flexibility in Proposition
\ref{p_newS}. The corresponding parameter in equivariant K-theory is
a fractional line bundle
$$
L \in \Pic(X) \otimes_\Z \R
$$
known as \emph{slope}.

\subsubsection{}

By definition
$$
\Stab \in K_\bT(X \times X^\bA)
$$
is an extension of attracting manifold to a K-theory class
supported on $\Attr^f$ that satisfies
\begin{equation}
\deg_\bA \Stab\big|_{F_j \times F_i} \subset
\deg_\bA \Ld  \left(T^{1/2}\right)^\vee \big|_{F_j} +
\weight_\bA\left(L\big|_{F_j}\right) -
\weight_\bA\left(L\big|_{F_i}\right) \,.\label{degStab}
\end{equation}
Here
$$
\deg_\bA f(a) = \textup{Newton polytope} (f) \subset \fa^* 
$$
is the convex hull of nonzero coefficients of $f$.

\subsubsection{}

For a K-theory class supported on $\Attr^f$, the condition
\eqref{degStab} is equivalent to the, a priori, stronger condition
that
\begin{equation}
\deg_{\bA'} \Stab\big|_{F'\times F_i} \subset
\deg_{\bA'} \Ld  \left(T^{1/2}\right)^\vee \big|_{F'} +
\weight_{\bA'}\left(L\big|_{F'}\right) -
\weight_{\bA'}\left(L\big|_{F_i}\right) \,.\label{degStab2}
\end{equation}
for any subtorus $\bA' \subset \bA$ and any component $F'$ of 
restriction to $X^{\bA'}$. The polytopes in \eqref{degStab2} are the
projections of the corresponding polytopes in \eqref{degStab} to
$(\fa')^*$.

\subsubsection{}

Following the logic of the proof of Theorem \ref{t1}, we consider the
K-theory analog of \eqref{shortexact} 
  \begin{equation}
    \label{shortexact2}
    0 \to K_\bT(\Attr(F_i) \times F_0) \to 
    K_\bT(\bX_{<i} \times F_0) \to   K_\bT(\bX_i \times F_0) \to 0  \,. 
  \end{equation}
  In topological K-theory, we may replace $\bX_{<i}$ by
  \begin{align*}
    \bX'_{<i} &:= \textup{total space of  the normal bundle $\cN$ to
               $\Attr(F_i)$} \\
    \bX'_i &:= \textup{complement of the zero section of $\cN$} \,. 
  \end{align*}
  Then the sequence
  \eqref{shortexact2} becomes
  \begin{equation}
    \label{shortexact3}
    0 \to \cR[\bA] \xrightarrow{\,\, P \, \,} 
   \cR[\bA]  \to   \cR[\bA]/P\to 0  \,, 
  \end{equation}
  with
  $$
  \cR = K_{\bT/\bA}(F_i \times F_0)
  $$
  and
  \begin{equation}
  P = \sum (-1)^k \Lambda^k \cN^\vee \,. 
\label{PLambda}
\end{equation}

\subsubsection{} 

  \begin{Lemma}
  The polynomial \eqref{PLambda} is nondegenerate in the sense of
  Section \ref{s_Pnondeg}. 
\end{Lemma}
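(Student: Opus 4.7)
The plan is to make the $\bA$-weight decomposition of $\cN$ explicit, read off the Newton polytope of $P$ as a zonotope, and then verify that each vertex coefficient is (up to sign) a determinant line bundle in $\cR$, hence a unit.

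First, I would identify $\cN$. Since $\pi : \sAttr(F_i) \to F_i$ is a fibration in affine spaces, the pullback is an isomorphism on bundles, and the discussion around \eqref{Nj} identifies $\cN = \pi^* N_{\bX/F_i, <0}$. Being $\bT$-equivariant with all $\bA$-weights nonzero and repelling, it splits as
$$
\cN = \bigoplus_j \cN_j \otimes a^{w_j}, \qquad w_j <_{\fC} 0,
$$
where the $w_j$ are the distinct $\bA$-weights of $N_{\bX/F_i, <0}$ and each $\cN_j$ is a $\bT/\bA$-equivariant bundle on $F_i$ of rank $r_j$.

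Using the multiplicativity of $\Lambda^\bullet$ on direct sums,
$$
P = \sum_k (-1)^k \Lambda^k \cN^\vee = \prod_j \left( \sum_{k_j=0}^{r_j} (-1)^{k_j} a^{-k_j w_j} \Lambda^{k_j} \cN_j^\vee \right).
$$
Expanding the product, the coefficient of the monomial $a^{-\sum k_j w_j}$ is $\prod_j (-1)^{k_j} \Lambda^{k_j} \cN_j^\vee$, summed over tuples $\vec{k}$ with the same exponent sum. The Newton polytope of $P$ is therefore the zonotope $\Delta = \sum_j r_j [0, -w_j] \subset \fa^*$.

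The key observation is that the vertices of $\Delta$ are indexed by the chambers $\fC'$ of the hyperplane arrangement in $\fa$ dual to $\{w_j\}$: for a generic $\sigma' \in \fC'$ the extremal point of $\Delta$ in the direction $\sigma'$ is achieved by the \emph{unique} tuple with $k_j = r_j$ when $w_j <_{\fC'} 0$ and $k_j = 0$ otherwise. Hence the coefficient at this vertex is exactly
$$
\prod_{j : w_j <_{\fC'} 0} (-1)^{r_j} \det \cN_j^\vee,
$$
a product of invertible line bundles in $\cR = K_{\bT/\bA}(F_i \times F_0)$, up to sign. In particular, it is a unit, which gives the nondegeneracy required in Section \ref{s_Pnondeg}.

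The main point to take care of is the uniqueness of the extremal tuple $\vec{k}$ at each vertex, since \emph{a priori} several tuples could contribute to the same $\bA$-weight and thereby spoil the coefficient. The standard zonotope argument resolves this: extremality in a generic direction $\sigma'$ forces each $k_j$ separately to its maximum or minimum according to the sign of $\langle \sigma', w_j \rangle$, so exactly one tuple contributes to each vertex. No assumption on the $w_j$ beyond their being nonzero repelling weights is needed.
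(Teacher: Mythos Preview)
Your proof is correct and follows essentially the same approach as the paper: decompose $\cN$ by $\bA$-weights, pick a generic linear functional $\sigma'$ (the paper calls it $\xi$) to isolate a vertex of the Newton polytope, and observe that the coefficient there is $\pm$ a product of top exterior powers, hence a unit in $\cR$. Your presentation is a bit more explicit about the zonotope structure and the uniqueness of the extremal tuple, but the argument is the same.
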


\begin{proof}
Let $\eta\in \fa^*$ be a vertex of the Newton polytope of $P$. Since
it is a vertex, there exists $\xi \in \fa$ such that $\eta$ is the
unique maximum of $\xi$ on the polytope.

Decompose $\cN$ according to the characters of $\bA$ 
\begin{equation}
\cN= \bigoplus a^\mu \, \cN_\mu \,, \label{Ndec}
\end{equation}
and note that
$$
\langle \mu, \xi \rangle \ne 0
$$
if $\xi$ is chosen generically.  Then 
$$
P = \pm \Lambda^{\textup{top}} \left( \bigoplus_{\langle \mu, \xi
    \rangle < 0} \cN^\vee_\mu \right) \, a^\eta + \dots
$$
where dots stand for terms of lower weight with respect to $\xi$. 
\end{proof}

\subsubsection{}
{} From \eqref{Ndec} we conclude
$$
P(a) = \prod_{\textup{indivisible $\nu$}} P_\nu(a^\nu) \,.
$$
This corresponds to the decomposition
\begin{align}
  \Spec K_\bT(\bX'_i \times F_0) &= \Spec \cR[\bA]/P  \notag \\
                                   & = \bigcup_{\textup{indivisible $\nu$}}
                                     \Spec \cR[\bA]/P_\nu \notag \\
  &= \bigcup_{\codim \bA'=1}  \Spec
  K_\bT((\bX'_i)^{\bA'} \times F_0)\label{decomXp}
  \end{align}
  where the bijection between indivisible characters $\nu$ and
  codimension one subtori $\bA' \subset \bA$ is given by
  $$
  1 \to \bA' \to \bA \xrightarrow{a^\nu} \Ct \to 1 \,.
  $$

\subsubsection{}
We now apply Proposition \ref{p_int_toric} with 
\begin{align*}
  \Delta & = \deg_\bA \Ld  \left(T^{1/2}\right)^\vee \big|_{F_i} \,, \\
  \lambda &= 
\weight_\bA\left(L\big|_{F_i}\right) - 
\weight_\bA\left(L\big|_{F_0}\right) \,. 
\end{align*}
By \eqref{degStab2}, \eqref{decomXp} and the inductive hypothesis, $\Stab\big|_{\bX'_i
  \times F_0}$ gives an element of $H^0(\bbA,
\cO(\Delta_\lambda)\big|_{P=0})$,
which can be lifted to an element of $H^0(\bbA,
\cO(\Delta_\lambda))$. This lift is unique if $\lambda$ is
not integral. This gives a proof of the following

\begin{Theorem}
Stable envelopes exist in equivariant topological
K-theory. Further, they are 
unique if
$$
\weight_\bA\left(L\big|_{F_j}\right) - 
\weight_\bA\left(L\big|_{F_i}\right) \notin \cha(\bA)
$$
for every pair $F_j < F_i$. 
\end{Theorem}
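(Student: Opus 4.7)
The plan is to mirror the inductive skeleton of the proof of Theorem \ref{t1}. Fix a component $F_0$ of $\bX^\bA$, refine the partial order \eqref{part_order} to a total order on the components of $\bX^\bA$, and construct $\Stab\big|_{\bX_{<i}\times F_0}$ by decreasing induction on $i$, starting from the tautological attracting class $[\Attr(F_0)]$ on $\bX_{<0}\times F_0$. The inductive step extends a stable envelope from $\bX_i\times F_0$ across $\Attr(F_i)$ to one on $\bX_{<i}\times F_0$, using the K-theoretic excision sequence \eqref{shortexact2} and controlling the extension by Proposition \ref{p_int_toric}.

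The reduction proceeds as follows. In topological K-theory the excision sequence \eqref{shortexact2} may be computed on the total space of the normal bundle to $\Attr(F_i)$, where it becomes the Koszul sequence \eqref{shortexact3} for the polynomial $P$ of \eqref{PLambda}. Together with the decomposition \eqref{decomXp} of $\Spec K_\bT(\bX'_i\times F_0)$ into pieces indexed by codimension-one subtori $\bA'\subset\bA$, the restriction to $\bX'_i\times F_0$ of the class to be constructed becomes precisely a tuple of Laurent polynomials in $\bA$ whose Newton polytopes fit, in the sense of Section \ref{s_O_Delta_lambda}, into a shifted polytope $\Delta_\lambda$ with
\[
\Delta = \deg_\bA \Ld\!\left(T^{1/2}\right)^\vee\big|_{F_i}, \qquad \lambda = \weight_\bA\!\left(L\big|_{F_i}\right) - \weight_\bA\!\left(L\big|_{F_0}\right).
\]
The lemma preceding \eqref{decomXp} verifies that $P$ is nondegenerate, which is exactly the hypothesis needed to invoke Proposition \ref{p_int_toric}. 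Its surjectivity statement lifts the inductively given class on $\bX'_i\times F_0$ to a class on $\bX'_{<i}\times F_0$ that still satisfies the degree bound \eqref{degStab}, yielding existence; its injectivity statement, which requires $\lambda\notin\cha(\bA)$, gives uniqueness of the lift and hence of $\Stab$.

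The step I expect to require the most care is the bookkeeping across the decomposition \eqref{decomXp}: one must verify that the degree bounds supplied by the inductive hypothesis assemble, stratum by stratum along the codimension-one subtori $\bA'$, into the single statement that the restriction of the class to $\bX'_i\times F_0$ defines an element of $H^0(\bbA,\cO(\Delta_\lambda)\big|_{P=0})$. This is the content of the equivalence of \eqref{degStab} and \eqref{degStab2} for classes supported on $\Attr^f$: each projected Newton polytope appearing in \eqref{degStab2} matches the Newton polytope available on the corresponding piece of \eqref{decomXp}. Once this matching is in place, the two halves of Proposition \ref{p_int_toric} close the induction, and the support condition on $\fAttr$ (rather than merely on $\Attrc$) is recovered exactly as at the end of Section \ref{s_proof1}, by applying uniqueness on the complement of $\fAttr$.
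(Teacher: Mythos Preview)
Your proposal is correct and follows essentially the same approach as the paper: the same inductive skeleton from Theorem~\ref{t1}, the same reduction via the normal bundle to the Koszul sequence \eqref{shortexact3}, the same application of Proposition~\ref{p_int_toric} with the identical choice of $\Delta$ and $\lambda$, and the same use of the equivalence between \eqref{degStab} and \eqref{degStab2} to organize the degree bounds across the decomposition \eqref{decomXp}. The only addition is your final remark on recovering support on $\fAttr$, which the paper does not restate here but which carries over verbatim from Section~\ref{s_proof1}.
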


\subsubsection{}
The above argument may be also adapted to work in algebraic K-theory,
e.g. by first constructing stable envelopes for a generic attracting
cocharacter $\sigma: \Ct \to \bA$ and then checking the condition
\eqref{degStab2}
inductively on formal neighborhoods. However, a much stronger result
in $D^b \Coh X$ has been already established in \cite{DHLMO} and we refer
the reader there for details.

\subsection{Compactified K-theory}

The logic of Section \ref{sec:stable-envel-equiv} suggests the
following partial compactification of $\Spec K_\bT(\bX)$.

\subsubsection{}\label{s_intertia_fan}

Consider the fan in $\fa$ defined in Section \ref{s_fC} and call it
the \emph{inertia fan} of $\bX$. Recall that we have fixed a cone
$\fC$ of maximal dimension in this fan. This choice will \emph{not} play a role
in the current discussion. Let $\fC'$ be a cone of some dimension in
the inertia fan. It defines a subtorus $\bA' \subset \bA$, with
$$
\fa' =\Lie_\R \bA' = \Span(\fC')
$$
and a choice of attracting/repelling directions for the $\bA$-action
on $\bX^{\bA'}$.

One should think of each linear subspace $\fa'$ as being
decorated by the fixed locus $\bX^{\bA'}$ together with the weights of
its normal bundle. Those are the weights $\{w_i\}$ of Section
\ref{s_fC} that do not vanish on $\fa'$. The fan itself records the 
weights $\{w_i\}$ only up to proportionality and without multiplicity. 

\subsubsection{}

This fan gives a
toric compactification $\bAb$ of $\bA$, and thus a partial toric
compactification of $\bTb \supset \bT$.  Boundary strata correspond to
cones $\fC'$. As we approach a boundary
stratum
, we go to a particular infinity
$0_{\fC'}$ in
the torus $\bA'$. The coordinates in $\bT/\bA'$ are the coordinates
on the boundary stratum. 

For any splitting of 
\begin{equation}
  \label{TTT}
  1 \to \bA' \to \bT \to \bT/\bA' \to 1  \,, 
\end{equation}
we have
\begin{equation}
  \xymatrix{
      \Spec K_{\bT/\bA'}(\bX^{\bA'}) \ar[r] \ar[d] &   \Spec
      K_{\bT}(\bX) \ar[d] \\
     \textup{generic point}
      \ar[r] & \bA'  \,. 
  }\label{generic_point}
\end{equation}
With our hypotheses, the family \eqref{generic_point} extends over a neighborhood of  $0_{\fC'}$.
This is independent of the choice of the splitting 
and gives a partial compactification $\Kbar_\bT(\bX)$  of $\Spec
      K_{\bT}(\bX)$ over
$\bTb$  such
that 
\begin{equation}
  \xymatrix{
      \Spec
      K_{\bT}(\bX) \ar[d] \ar[r] & \Kbar_\bT(\bX) \ar[d] & 
      \Spec K_{\bT/\bA'}(\bX^{\bA'})  \ar[d] \ar[l] \\
     \bT \ar[r]  & \bTb &   \ar[l] \bT/\bA' \,, 
  } \label{KbarX} 
\end{equation}
where $\bT/\bA'$ is embedded as a boundary stratum. Note that
$K_\bT$ denotes a ring, while $\Kbar_\bT$ denotes a scheme. This makes
sense, because $\Kbar_\bT$ is not affine.

\subsubsection{}

Recall that a line bundle $\cL$ on a toric variety $\bTb$ is specified
by a function
\begin{equation}
\ord: \fa \to \R\label{ord}
\end{equation}
which is continuous and integral linear on the cones of the fan. The sections
$f$ of
$\cL$ are characterized by
$$
f = O(a^{\ord})\,, \quad a\to 0_{\fC'} \,, 
$$
for all cones $\fC'$. In what follows, we relax the integrality assumption
on the function \eqref{ord}. This allows the 
sheaves $\cO(\Delta_\lambda)$ considered in Section
\ref{s_O_Delta_lambda}.

To construct a line bundle on $\Kbar_\bT(\bX)$, we may use a different
functions \eqref{ord} on different components $F$ of the fixed locus,
as long as these glue over different strata. Given a polarization $T^{1/2}$
and a slope $L$, we define the line bundle $\cS(T^{1/2},L)$ by 
$$
\ord_{\cS(T^{1/2},L)} =  \textup{maximal weight in } L \otimes \left( \Ld \,
  T^{1/2}  \right)^\vee \,,
\quad  a\to 0_{\fC'}  \,. 
$$
With this definition, the degree bound \eqref{degStab2}  is equivalent
to being a global section of $\cS$. 
Note that the last term in \eqref{degStab2} is a constant from our
current perspective and may be absorbed into the choice of linearization of
$L$.

%$\mathscr{T}\!\!\mathit{h}$

\subsubsection{}\label{s_replace_polarization}

Note we may replace the polarization $T^{1/2}$ by any other virtual
bundle $\cV$ on $\bX$, as long as the weights of $\cV$ define a fan which
refines the inertia fan. For instance, one can use the tangent bundle
to get a line bundle which is, basically, the square of $\cS$.

\subsubsection{}

Recall that, by definition
\begin{equation}
  \label{inducedpo}
  \textup{induced polarization of $\bX^{\bA'}$} =
  \left(T^{1/2}\big|_{\bX^{\bA'}}\right)^{\bA'} \,. 
\end{equation}
Similarly, $L\big|_{\bX^{\bA'}}$ gives the induced slope.

In the compactification \eqref{KbarX}, the same fixed locus $\bX^{\bA'}$ appears at several
infinities, and the natural polarization
of all these copies is different. 

\begin{Definition}
  Given $0_{\fC'} \in \overline{\bA'}$ and virtual vector bundle
  $\cV$, 
  we define 
\begin{equation}
  \label{limitpo}
  \cV_{\lim}= \cV\big|_{\bX^{\bA'}, \ge 0} - \cV^\vee
  \big|_{\bX^{\bA'}, < 0}\,, 
\end{equation}
where the subscripts refer to nonrepelling and repelling directions as
$a\to 0_{\fC'}$. 
\end{Definition}

\noindent
Note that we can write
\begin{equation}
  \label{limitpo2}
  \cV_{\lim}= \left(\cV\big|_{\bX^{\bA'}} \right)^{\bA'} 
    + \delta \cV - \delta \cV^\vee \,, \quad
 \delta \cV = \cV\big|_{\bX^{\bA'}, >0} \,. 
\end{equation}
In particular, the limit of a polarization is a polarization. It 
differs from the induced polarization precisely by the $\delta \cV - \delta \cV^\vee$  term
in \eqref{limitpo2}.

We have the following simple 

\begin{Lemma}\label{l_lim} 
 The boundary strata of $(\Kbar_\bT(\bX),\cS)$ have the form
 $(\Kbar_\bT(\bX^{\bA'}),\cS_{\lim})$, where $\cS_{\lim}$ is defined using
 the limit polarization and the induced slope. 
\end{Lemma}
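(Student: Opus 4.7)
The proof naturally splits into two parts: identifying the underlying scheme of the boundary stratum, and identifying the restricted line bundle.

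For the scheme-theoretic identification, I would iterate the compactification construction \eqref{KbarX}. For a cone $\fC' \subset \fa$ in the inertia fan of $\bX$, the open part of the boundary stratum at $\fC'$ is by construction $\Spec K_{\bT/\bA'}(\bX^{\bA'})$, and its further boundaries correspond to cones $\fC'' \supsetneq \fC'$. Since $\bX^{\bA''} = (\bX^{\bA'})^{\bA''/\bA'}$, these cones $\fC''$ are precisely the cones of the inertia fan of $\bX^{\bA'}$ (relative to the residual $\bA/\bA'$-action) once projected into $\fa/\fa'$. The full stratum thus recovered is $\Kbar_\bT(\bX^{\bA'})$, matching its intrinsic description via the same construction applied to $\bX^{\bA'}$.

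For the line bundle identification, I would compute the ord function $\bar\psi$ of $\cS$ restricted to the boundary stratum at $\fC'$ and compare it with $\ord_{\cS(T^{1/2}_{\lim}, L|_{\bX^{\bA'}})}$. Since $\ord_\cS$ is piecewise linear on the inertia fan and thus linear on the cone $\fC'$, there is a character $u \in \cha(\bT)$, well-defined modulo $\cha(\bT/\bA')$, with $\ord_\cS|_{\fC'} = \langle u, \cdot\rangle|_{\fC'}$; the ambiguity corresponds to the choice of linearization on the stratum and is absorbed into the induced slope $L|_{\bX^{\bA'}}$. Standard toric formalism then gives $\bar\psi(v'') = \ord_\cS(v'') - \langle u, v''\rangle$ for $v'' \in \fC''$ with $\fC'' \supseteq \fC'$. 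A direct weight-by-weight computation using the defining formula for $\ord_\cS$ shows that each weight $\nu$ of $T^{1/2}|_{\bX^{\bA''}}$ with $\nu|_{\fC'} \geq 0$ contributes unchanged to $\bar\psi$, matching the $T^{1/2}|_{\bX^{\bA'}, \geq 0}$ summand of $T^{1/2}_{\lim}$; whereas each weight $\nu$ with $\nu|_{\fC'} < 0$ has its contribution reflected by the subtraction of $\langle u, \cdot\rangle$, producing the ord contribution of $-\nu$ with virtual multiplicity $-1$, matching the $-(T^{1/2})^\vee|_{<0}$ summand. This is exactly the decomposition \eqref{limitpo2}.

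The principal obstacle is that $T^{1/2}_{\lim}$ is virtual, so $\cS(T^{1/2}_{\lim}, L|_{\bX^{\bA'}})$ must be interpreted via the extended ord-function formalism indicated in Section \ref{s_replace_polarization}. Careful sign-bookkeeping is then needed to verify that the ``reflection'' of each repelling weight under the subtraction by $\langle u, \cdot\rangle$ matches precisely the negated-dual contribution coming from the $-(T^{1/2})^\vee|_{<0}$ summand of the limit polarization; once this bookkeeping is done for a single weight, the general case follows by additivity.
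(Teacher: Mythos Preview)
Your proposal is correct and follows essentially the same approach as the paper. The paper compresses the entire line-bundle computation into the single asymptotic identity
\[
1 - w^{-1} \sim
\begin{cases}
  (1-w^{-1}) \big/ (1-w^{-1})\,, & w\to \infty\,, \\
  (1-w^{-1}) \big/ (1-w)\,, & w\to 0\,,
\end{cases}
\]
which encodes exactly your weight-by-weight analysis: each factor $1-w^{-1}$ of $\Ld (T^{1/2})^\vee$, after dividing out its leading monomial on $\fC'$, becomes the corresponding factor for $T^{1/2}_{\lim}$ (unchanged for nonrepelling $w$, and equal to the virtual factor $(1-w)^{-1}$ for repelling $w$). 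Your version unpacks this into the toric $\ord$-function language and makes the scheme-theoretic identification of the stratum explicit, whereas the paper leaves both implicit; but the content is the same.
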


\begin{proof}
  Follows from 
  $$
  1 - w^{-1} \sim
  \begin{cases}
    (1-w^{-1}) \big/ (1-w^{-1})  \,, & w\to \infty \,, \\
    (1-w^{-1}) \big/ (1-w) \,, & w\to 0 \,.
  \end{cases}
  $$
\end{proof}

\subsubsection{}

Recall that every change in polarization may be compensated by a
change in the slope. In particular, we have 
\begin{equation}
  \label{Ldplim}
  \deg _\bA \Ld \,\,  T^{1/2}_{\lim} = \deg_\bA \det N^{1/2}_{>0}  \otimes \Ld \, \, 
  T^{1/2}_\textup{ind} \,, 
\end{equation}
where  $N^{1/2}_{>0}$ is the attracting part of the
polarization restricted to $\bX^{\bA'}$. Therefore, 
$$
\cS_{\lim} = \cS\left(T^{1/2}_{\lim}, L_\textup{ind}\right)=
\cS\left(T^{1/2}_\textup{ind}, L_{\lim}\right) \,, 
$$
where
$$
L_{\lim} = L \otimes (\det
N^{1/2}_{>0})^{-1}  \,.
$$

\subsubsection{}

\noindent 
Recall from
formula (9.1.6) in \cite{Opcmi} that stable envelopes may be
normalized so that
\begin{equation}
  \label{norm_stab}
  \Stab\big|_{\diag \bX^\bA} = (-1)^{-\rk N^{1/2}_{>0}} \left(\frac{\det
      N_{<0}} {\det N^{1/2}} \right)^{1/2} \, \cO_\Attr \,,
\end{equation}
where $N^{1/2}$ denote the moving part of $ T^{1/2}\big|_{\bX^{\bA}}$ and
subscripts denote attracting and repelling directions for
the chamber $\fC$ as in Section \ref{s_fC}. The following is
straightforward: 

\begin{Lemma}\label{l_res_Attr}
The limit of \eqref{norm_stab} as $a\to 0_\fC'$ is the same expression for
the limit polarization. 
\end{Lemma}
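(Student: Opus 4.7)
The plan is to reduce \eqref{norm_stab} to a product of elementary weight factors and track each factor under the boundary limit, matching term by term. First, using the polarization identity $N=N^{1/2}+(N^{1/2})^\vee$, we get $N_{<0}=N^{1/2}_{<0}+(N^{1/2}_{>0})^\vee$, so that
$$\left(\frac{\det N_{<0}}{\det N^{1/2}}\right)^{1/2}=\left(\det N^{1/2}_{>0}\right)^{-1}.$$
Combining with $\cO_\Attr|_{\diag\bX^\bA}=\Lambda^\bullet N^\vee_{>0}$, the right hand side of \eqref{norm_stab} factors as a product over the weights $\mu$ of $N^{1/2}|_{\bX^\bA}$ of elementary pieces built from $a^{-\mu}$ and $(1-a^{\pm\mu})$; this rewriting is the key reduction, since each factor is attached to a single weight $\mu$.

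Next, we decompose the weights $\mu$ by their behavior on $\fC'$. For $\mu|_{\fa'}=0$ the character $a^\mu$ descends to $\bA/\bA'$ and the corresponding factor extends regularly across the boundary $0_{\fC'}$, matching the contribution of the same weight to \eqref{norm_stab} for $(\bX^{\bA'},T^{1/2}_{\lim})$, since by construction the $\bA'$-fixed part of $T^{1/2}_{\lim}$ equals that of $T^{1/2}$ and contributes identically to $N^{\lim,1/2}$ and $N^{\lim}$. For $\mu|_{\fC'}\neq 0$, the limit polarization $T^{1/2}_{\lim}=T^{1/2}|_{\ge 0}-(T^{1/2}|_{<0})^\vee$ either keeps $\mu$ (if $\mu|_{\fC'}>0$) or replaces it by $-\mu$ with opposite sign (if $\mu|_{\fC'}<0$), producing the virtual multiplicities worked out in \eqref{limitpo2}.

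The heart of the argument is then the direct application of the asymptotic rule from Lemma \ref{l_lim}: on one side, $(1-a^{-\nu})$ for $\nu$ with $\nu|_{\fC'}>0$ (that is, $a^{-\nu}\to\infty$) becomes trivial after dividing by its leading $-a^{-\nu}$, while for $\nu|_{\fC'}<0$ the factor $(1-a^{-\nu})$ is already regular and tends to $1$. The leading $-a^{-\nu}$ pieces thrown out by this normalization combine with the $a^{-2\mu}$ polarization prefactors and produce exactly the factors that differ between \eqref{norm_stab} for $T^{1/2}$ and for $T^{1/2}_{\lim}$, namely those coming from replacing repelling weights by their duals. Collecting everything, the surviving expression is \eqref{norm_stab} for $(\bX^{\bA'},T^{1/2}_{\lim})$ at $\diag\bX^\bA$, as required.

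The main obstacle is careful bookkeeping of signs and multiplicities, in particular handling the virtual (negative-coefficient) terms in $T^{1/2}_{\lim}|_{\bX^\bA}$; but these virtual terms are precisely the shadows of the $-a^{-\nu}$ factors discarded in the asymptotic rule, so the match works out cleanly. Since everything is a product of elementary one-weight factors, the verification reduces to the single-weight identities
$$1-w^{-1}=-w^{-1}(1-w),\qquad (\text{with } w=a^\mu\to 0\text{ or }\infty),$$
which is exactly the content of the two cases displayed in the statement's hint.
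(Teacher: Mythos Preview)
The paper gives no proof of this lemma, simply calling it ``straightforward''. Your approach---simplifying \eqref{norm_stab} via the polarization identity, factoring into single-weight contributions, and then applying the asymptotic rule from the proof of Lemma~\ref{l_lim} weight by weight---is exactly the verification the paper has in mind, and it is correct.

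One minor slip: the normal bundle to $\Attr(F)$ in $\bX$ consists of the \emph{repelling} directions, so $\cO_\Attr\big|_{\diag\bX^\bA}=\Lambda^{\!\bullet} N_{<0}^\vee$, not $\Lambda^{\!\bullet} N_{>0}^\vee$. This does not affect your argument: using $N_{<0}=N^{1/2}_{<0}\oplus(N^{1/2}_{>0})^\vee$ together with your computation $\bigl(\det N_{<0}/\det N^{1/2}\bigr)^{1/2}=(\det N^{1/2}_{>0})^{-1}$ and the sign $(-1)^{\rk N^{1/2}_{>0}}$, the whole expression still collapses to $\prod_{\mu}(1-a^{-\mu})$ over the weights $\mu$ of $N^{1/2}$, and your weight-by-weight limit analysis proceeds unchanged.
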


\subsubsection{}

\begin{Proposition}\label{p_stable_compact}
Let $F$ be a component of $\bX^\bA$ and let the slope $L$ be generic
and linearized so that
it has zero $\bA$-weight on $F$. For any $\alpha \in K_{\bT/\bA}(F)$,
we have
\begin{equation}
\Stab_{T^{1/2},L} \, \alpha \in H^0\left(\Kbar_\bT(\bX), \cS(T^{1/2},L)\right)\label{StabH0}
\end{equation}
and its restriction to the boundary are the stable envelopes for
$\bX^{\bA'}$, extended by zero outside the component containing $F$. 
\end{Proposition}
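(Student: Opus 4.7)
The plan is to derive both assertions from the defining degree bound \eqref{degStab2} of $\Stab$, the boundary formulas in Lemmas \ref{l_lim} and \ref{l_res_Attr}, and uniqueness of K-theoretic stable envelopes. For \eqref{StabH0}, I would unpack the order function defining $\cS(T^{1/2},L)$: a class $c$ on $\bX\times F$ extends to a global section of $\cS$ on $\Kbar_\bT(\bX)$ precisely when, for every subtorus $\bA'\subset\bA$ and every component $F''$ of $\bX^{\bA'}$, its Newton polytope on $F''\times F$ lies inside
$$
\deg_{\bA'} \Ld (T^{1/2})^\vee\big|_{F''} + \weight_{\bA'}\bigl(L\big|_{F''}\bigr) - \weight_{\bA'}\bigl(L\big|_F\bigr).
$$
This is precisely \eqref{degStab2}; the factor $\alpha\in K_{\bT/\bA}(F)$ contributes no $\bA$-weight, and our linearization of $L$ kills the $F$-term, giving \eqref{StabH0}.

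For the boundary identification, let $\rho$ denote the restriction of $\Stab_{T^{1/2},L}\alpha$ to the boundary stratum at $0_{\fC'}$. By Lemma \ref{l_lim} together with \eqref{Ldplim}, $\rho$ is a section of $\cS(T^{1/2}_{\lim}, L_{\textup{ind}})$ on $\Kbar_\bT(\bX^{\bA'})$. Let $\Stab'_F$ denote the stable envelope for the $\bA/\bA'$-action on $\bX^{\bA'}$ at the component $F\subset (\bX^{\bA'})^{\bA/\bA'}$ with limit polarization $T^{1/2}_{\lim}$ and induced slope $L_{\textup{ind}}$, supplied by the K-theoretic existence theorem above for generic slope. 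To conclude $\rho=\Stab'_F\alpha$ I would verify three defining properties: (a) support in $\Attr^f_{\bA/\bA'}\cap(\bX^{\bA'}\times F)$, obtained by pushing the support $\Attr^f\cap(\bX\times F)$ of $\Stab\cdot \forp_2^*\alpha$ through the $a\to 0_{\fC'}$ limit and observing that only $\bA'$-nonrepelling directions survive, which places $\rho$ inside the component of $\bX^{\bA'}$ containing $F$ and yields the "extension by zero" clause on other components; (b) the section-of-$\cS$ property just noted; (c) diagonal normalization on $\bX^\bA$, which is Lemma \ref{l_res_Attr}. Uniqueness of $\Stab'_F$ then identifies $\rho$ with $\Stab'_F\alpha$.

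The hard part is the support degeneration in (a): one must show that, under a generic cocharacter $\sigma\in\fC$ tending to the face $\fC'$, the closure $\overline{\Attr^f}$ meets the boundary stratum $\bX^{\bA'}$ precisely in $\Attr^f_{\bA/\bA'}$; equivalently, long $\bA$-attracting chains factor in the limit into $\bA'$-attraction (which is absorbed in the specialization to $\bX^{\bA'}$) followed by residual $\bA/\bA'$-attraction inside $\bX^{\bA'}$. Once this standard attractor degeneration is in hand, together with Lemmas \ref{l_lim} and \ref{l_res_Attr}, the K-theoretic uniqueness theorem closes the argument.
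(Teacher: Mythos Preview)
Your argument for \eqref{StabH0} and your use of Lemmas \ref{l_lim} and \ref{l_res_Attr} for the degree bounds and diagonal normalization match the paper exactly; the paper in fact says that only the boundary restriction statement requires proof, treating \eqref{StabH0} as a restatement of \eqref{degStab2}.

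The substantive divergence is in how you obtain the ``extension by zero'' clause. You route it through support: you want to show that the limit of $\Attr^f\cap(\bX\times F)$ lands in the connected component of $\bX^{\bA'}$ containing $F$, and you flag the requisite attractor-degeneration statement as the hard part. The paper bypasses this entirely with an integrality argument. Restriction to the boundary stratum at $0_{\fC'}$ means extracting the coefficient of $a^{\ord}$ in the $a\to 0_{\fC'}$ expansion, and this coefficient can be nonzero only when the function $\ord$ is integral on $\fC'$. On a component $F'$ of $\bX^{\bA'}$ not containing $F$, the weight $\weight_{\bA'}(L|_{F'})-\weight_{\bA'}(L|_F)$ is nonzero by Lemma \ref{l_weight}, hence non-integral by genericity of $L$, so $\ord$ is non-integral and the boundary restriction vanishes there. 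This is one line and uses nothing about attractor geometry.

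Your support route is also shakier than you suggest: the set $\Attr^f\cap(\bX^{\bA'}\times F)$ need not lie in a single connected component of $\bX^{\bA'}$, since $\bA$-attracting chains can pass between components of $\bX^{\bA'}$. What actually forces vanishing on the other components is not support but the non-integrality of the leading exponent, exactly as above. With the integrality argument in hand, the paper then checks only the degree bounds (via Lemma \ref{l_lim}) and the diagonal normalization (Lemma \ref{l_res_Attr}) on the component containing $F$ and invokes uniqueness; no attractor-degeneration lemma is needed.
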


\begin{proof}
Only the statement about restriction to the boundary requires
proof. Restriction to the boundary means taking the coefficient of 
$a^{\ord}$ as $a\to 0_{\fC'}$. It can only be nonzero if the function
$\ord$ is integral on $\fC'$. Therefore, by Lemma \ref{l_weight} and
genericity of $L$, 
the restriction to the boundary vanishes outside of the connected
component containing $F$.

On the connected component of $\Kbar_\bT(\bX^{\bA'})$ containing $F$,
the restriction of $\Stab_{T^{1/2},L} \, \alpha$ 
satisfies the degree bounds for stable envelopes.  By Lemma
\ref{l_res_Attr}, it correctly restricts to $F$. Thus it is the stable
envelope for the $\bA$-action on $\bX^{\bA'}$. 
\end{proof}

\section{Nodal degeneration of stable envelopes}
\label{sec:stable-envel-tate}

\subsection{$\Ell_\bT(\pt)$ near a node}

\subsubsection{} 

Equivariant cohomology theories over $\bB$ are constructed from $1$-dimensional
group schemes over $\bB$ and, in this section, we are interested in
the case when a nodal fiber $E_0$ appears in a family of elliptic
curves. In a neighborhood of the nodal fiber (formal or analytic),
there is a relation between $K_\bT(\bX)$ and $\Ell_\bT(\bX)$. Our goal is to explain this relation
and verify that it 
respects stable envelopes.

\subsubsection{}

Over a field, the identity component of the group smooth points of
$E_0$ is a  torus of rank 1, which may require a quadratic extension to split.
For simplicity, let us assume it is split. Moreover, 
as our local model, we will take $\bB = \Spec \cR[[q]]$, where $\cR$ is
a commutative ring with unit, and 
\begin{equation}
\xymatrix{
  E_0  \ar[r] \ar[d] & E \ar[d] & E_q = \Gm / q^\Z  \ar[l] \ar[d]  \\
  q=0 \ar[r] & \bB &  \Spec \cR\Fq  \ar[l] \,, 
  }\label{local model} 
\end{equation}
see Appendix \ref{s_Tate} for a reminder about Tate's construction
of $\Gm / q^\Z$.

Tate's construction only requires
the convergence of the standard $\vth$-series \eqref{theta_ser}.  Therefore, one could replace
$\cR[[q]]$ with any complete normed commutative ring with a
nonzerodivisor
 $q$ of norm $\|
q\| < 1$, for instance functions holomorphic in the unit disk of
$\C$. 

\subsubsection{}

% \task{move to appendix about elliptic cohomology}

For brevity, we denote $\cE=\Ell_\bT(\pt)$. 
We begin with general remarks about the structure of
$\cO_{\Ell_\bT(\bX)}$ as a sheaf over $\cE$. We observe
that it belongs to a certain abelian subcategory of
$\Coh \cE$, which is independent of $q$.

\subsubsection{}

The manifold $\bX$ may be represented by a cell complex  built from $\bT$-equivariant cells of the form
$\bT/\Gamma  \times D^n$, where $\Gamma \subset \bT$ is a
subgroup and $\bT$ acts trivially on the disc $D^n$. Note that only 
finitely many possible
stabilizers $\Gamma$ appear for a given $\bX$. We call them the
\emph{inertia subgroups} of $\bX$  and denote their set by
$\bGamma_\bX=\{\Gamma\}$. The sets
\begin{equation}
\{\log \Gamma \}_{\Gamma \in \bGamma_\bX} \subset \ft = \Lie_\R
\bT\label{lGX} \,, 
\end{equation}
where $\log \Gamma$ is defined in Section \ref{logexp}, form a
periodic stratification of $\ft$.

If $\bX$ is a smooth manifold, the stratification \eqref{lGX}
corresponds to a periodic hyperplane arrangement --- the $\bT$-weights
in the normal bundle to the fixed locus. In general (e.g.\ by an equivariant
embedding into a smooth manifold) we may enlarge the set $\bGamma_\bX$
so that it corresponds to a periodic hyperplane arrangement. We will
assume that this is the case and will refer to \eqref{lGX} as the  
\emph{periodic inertia fan} of $\bX$, compare with Sections \ref{s_fC}
and 
\ref{s_intertia_fan}.

\subsubsection{} 

Reflecting the structure of cell and of the attaching maps, the sheaf $\cO_{\Ell_\bT(X)}$ is
built from the  sheaves
$$
\cO_{\Ell_\bT(\bT/\Gamma)} = \iota_{\Gamma,*} \cO_{\Ell_\Gamma(\pt)}
\in \Coh \Ell_\bT(\pt)
$$
where
$$
\iota_\Gamma: (\Gamma, \pt) \to (\bT, \pt) 
$$
is an equivariant map. We will identify $\Ell_\Gamma(\pt)$ with its 
image under $\iota_\Gamma$.

\subsubsection{}

Let $\bGamma = \{ \Gamma \}$ be a lattice of subgroups of
$\bT$. The corresponding $\Ell_\Gamma(\pt)$ define a stratification of
$\cE$. Further, the subvarieties
\begin{equation}
\diag \Ell_\Gamma(\pt)  \subset \cE^k\label{strata_k}
\end{equation}
define a stratification of $\cE^k$, in which the dimension of
consecutive strata differ by $k$.

The following definition is inspired by Bezrukavnikov's definition of
perverse coherent sheaves, see \cite{ArinkBezr}. 

\begin{Definition}
Let $f: \cF_1 \to \cF_2$ be a morphism in $\Coh \cE$. We say that $f$
is locally constant with respect to $\bGamma$ if it is a restriction
to the diagonal in $\cE^k$ of a morphism of constant rank along the stratification
\eqref{strata_k} for $k=2$.  We denote the category of locally constant
sheaves and locally constant morphism by $\Coh_{\bGamma} \cE$. 
\end{Definition}

\begin{Proposition}
For any $\bT$-equivariant map
$
f: X \to Y
$
between finite cell complexes, the corresponding map
$$
\Ell(f): \cO_{\Ell_\bT(X)} \to \cO_{\Ell_\bT(Y)} 
$$
is locally constant with respect to the stratification generated by
$\bGamma_X$ an $\bGamma_Y$. 
\end{Proposition}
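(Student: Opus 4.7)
The plan is to reduce to the case of an equivariant map between individual cells and then verify local constancy directly from the structure of the subschemes $\cE_\Gamma := \Ell_\Gamma(\pt) \subset \cE$.

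The first step is a reduction to cells. Both $\cO_{\Ell_\bT(X)}$ and $\cO_{\Ell_\bT(Y)}$ are assembled iteratively from the building blocks $\iota_{\Gamma,*} \cO_{\cE_\Gamma}$ via pushouts coming from $\bT$-equivariant cell attachments. Let $\bGamma$ be the common enlargement of $\bGamma_X$ and $\bGamma_Y$ whose associated arrangement is periodic and contains all relevant strata. A key sublemma is that $\Coh_{\bGamma} \cE$ is closed under such pushouts: a colimit of locally constant morphisms remains locally constant, because the diagonals needed to verify constant rank already appear as strata in the refined stratification. This reduces the problem to the case of a $\bT$-equivariant cellular map between single cells $\bT/\Gamma_X \times D^n \to \bT/\Gamma_Y \times D^m$.

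Since $D^n$ and $D^m$ are equivariantly contractible, this further reduces to a map $f: \bT/\Gamma_X \to \bT/\Gamma_Y$, which exists only if $\Gamma_X \subseteq \Gamma_Y$ (using abelianness of $\bT$) and is then specified by a translation $\tau \in \bT/\Gamma_Y$. The induced morphism $\Ell(f): \iota_{\Gamma_X,*} \cO_{\cE_{\Gamma_X}} \to \iota_{\Gamma_Y,*} \cO_{\cE_{\Gamma_Y}}$ is the composition of the closed embedding $\cE_{\Gamma_X} \hookrightarrow \cE_{\Gamma_Y}$ with translation by the image of $\tau$ in $\cE$. To exhibit local constancy, I would lift $\Ell(f)$ to a morphism $\widetilde{f}$ on $\cE \times \cE$ using the group law of $\cE$: define $\widetilde{f}: p_1^* \iota_{\Gamma_X,*} \cO_{\cE_{\Gamma_X}} \to p_2^* \iota_{\Gamma_Y,*} \cO_{\cE_{\Gamma_Y}}$ by composing restriction to $\cE_{\Gamma_X}$ (via the first coordinate) with translation by the difference of the two coordinates. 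On each stratum $\diag \cE_\Gamma$ the rank on fibers depends only on the combinatorial relation of $\Gamma$ to $\Gamma_X$ and $\Gamma_Y$, hence is constant along the stratum; restriction to the main diagonal recovers $\Ell(f)$.

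The main obstacle will be the bookkeeping in the reduction step: rigorously verifying that attaching maps contribute locally constant morphisms to $\cO_{\Ell_\bT(-)}$ and that colimits preserve local constancy with respect to the refined stratification. This requires tracking precisely how the inertia subgroups of attaching spheres fit into $\bGamma$, but once $\bGamma$ has been chosen to include all of these, the per-cell verification in the previous paragraph closes the argument.
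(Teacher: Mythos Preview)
Your reduction step has a real gap, not just a bookkeeping one. An equivariant map $f: X \to Y$ does not decompose into maps between individual cells: even after equivariant cellular approximation, a cell $\bT/\Gamma \times D^n$ of $X$ lands in a skeleton of $Y$, not in a single cell of $Y$. So your closure-under-colimits sublemma, even if granted, only reduces to maps from one cell of $X$ into all of $Y$, which is not the situation you then analyze. The passage ``this reduces the problem to the case of a $\bT$-equivariant cellular map between single cells'' is exactly where the argument breaks; the remaining per-cell computation, while correct for what it treats, does not cover the general case.

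The paper avoids this entirely by a global construction. It forms the fiber product $X^{(2)} = X \times_{X/\bT} X$ over the orbit space, which carries a natural $\bT^2$-action and a diagonal equivariant map $(\bT,X) \to (\bT^2, X^{(2)})$. The map $f$ induces a $\bT^2$-equivariant $f^{(2)}: X^{(2)} \to Y^{(2)}$, and $\Ell(f^{(2)})$ is then a morphism of sheaves over $\cE \times \cE$ whose restriction to the diagonal recovers $\Ell(f)$; the constant-rank condition along the strata $\diag \Ell_\Gamma(\pt)$ follows from the inertia structure of $X^{(2)}$. Your hand-built lift on a single orbit, using the group law of $\cE$, is precisely what the $X^{(2)}$ construction produces in that special case; the paper's point is that the construction globalizes functorially, so no cell-by-cell assembly is needed.
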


\begin{proof}
  Define
  $$
  X^{(2)} = X \times_{X/\bT} X\,, 
  $$
  where the product is over the quotient by $\bT$. This is built from
  the cells of the form $(\bT/\Gamma)^2 \times D^n$, with the
  attachment maps as before. 
  It has a natural $\bT^2$ action such that
  \begin{equation}
  (\bT, X) \xrightarrow{\quad \diag \quad} (\bT^2, X^{(2)})
\label{XtoX2}
\end{equation}
is an equivariant morphism. The stabilizers of points now have the
form $\diag \Gamma \subset \bT^2$. 
  
  The map $f$ as above gives rise to a  $\bT^2$-equivariant map
  $$
  f^{(2)}:   X^{(2)} \to  Y^{(2)} 
  $$
  and is obtained from it after the restriction to the diagonal
  \eqref{XtoX2}, which proves the proposition. 
\end{proof}

Intuitively, the proposition is clear, as the behavior of $f$ on
different strata is determined by the corresponding map between fixed
points. There are other ways to prove the proposition, for instance,
the restriction of $f$ to a any cyclic subgroup of $\cE$ should be
equivariant with respect to all automorphisms of that subgroup.

\subsubsection{}

The equations of $\Gamma\subset \bT$  and
$\Ell_\Gamma(\pt) \subset \Ell_\bT(\pt)$ are encoded by the
kernel in the following exact
sequence
$$
0 \to \Gamma^\perp \to \chr(\bT) \to \chr(\Gamma) \to 0 \,. 
$$
In particular, the divisibility of the generators of $\Gamma^\perp$
determines the numbers $\{m_i\}$ in
\begin{equation}
\Ell_\Gamma(\pt) \cong E^{\dim \Gamma} \times \prod
E[m_i]\label{EllGa} \,, 
\end{equation}
where  $E[m]$ denotes the scheme of points
of order $m$ on $E$.

\subsubsection{}

 The category $\Coh_{\bGamma} \cE$ is determined by the structure of the
 lattice $\bGamma$ and the structure of subschemes $E[m] \subset E$ of
 points of order $m$. Since $E$ is a Tate elliptic
curve, we have\footnote{In particular, a Tate
  elliptic curve is never supersingular, which is the place where we
  get extra morphisms and, hence, more information can be captured by
  elliptic cohomology.}
$$
E[m] \cong \mu_m \times \Z/m \Z \,. 
$$
Here $\mu_m \subset \Gm$ is the scheme of $m$th roots of unity and
$\Z/m \Z$ is a constant group scheme which records the valuation a
point of order $m$ as in \eqref{val}. As a result, the category $\Coh_{\bGamma} \cE$ is
independent of $q$.

% and they
% index irreducible components of the universal stabilizer 
% %
% \begin{equation}
%   \fS = \left\{ (t,x) \, \big| \, t x= x \right\} \subset \bT 
%   \times \bX \,. 
%   \label{fS}
% \end{equation}
% %

\subsubsection{}

Our next goal is to construct a degeneration of $\Ell_\bT(\pt)$ in
which the inertia subgroups $\Ell_\Gamma(\pt)$ 
are transverse to the special fiber and  have
the exact same structure (in particular, intersect each other in the exact
same way) in the special and the generic fibers. This will extend the
constant family of subcategories $\Coh_{\bGamma}(\cE)$  to the central
fiber. 

While this cannot be achieved for all subgroups of $\bT$, for any
finite set $\bGamma$ such degeneration may be
constructed after a base change of the form $q=(q^{1/M})^M$ and a birational
transformation. An elementary example
is plotted in Figure \ref{fLeg}.

\begin{figure}[!h]
  \centering
  \includegraphics[scale=0.33]{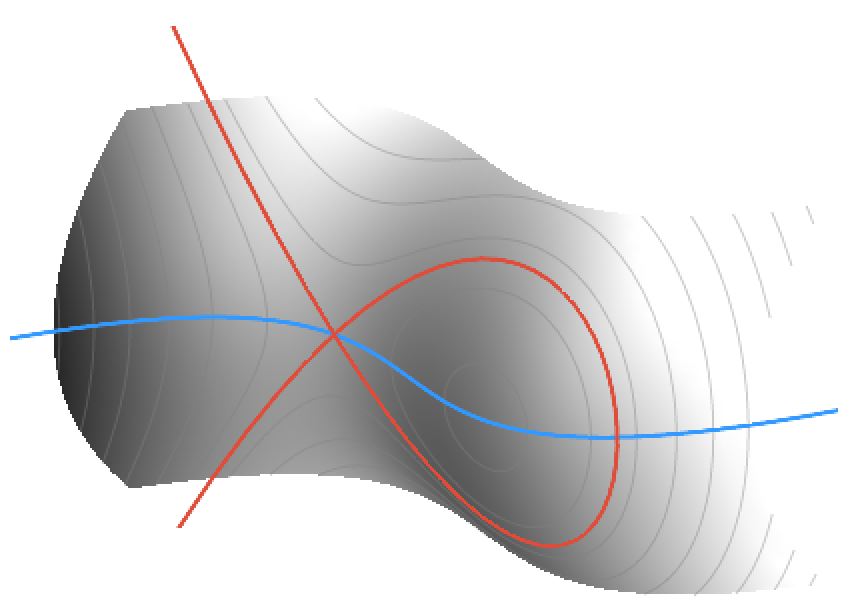}
  \hspace{0.5cm}\raisebox{-0.49cm}{\includegraphics[scale=0.33]{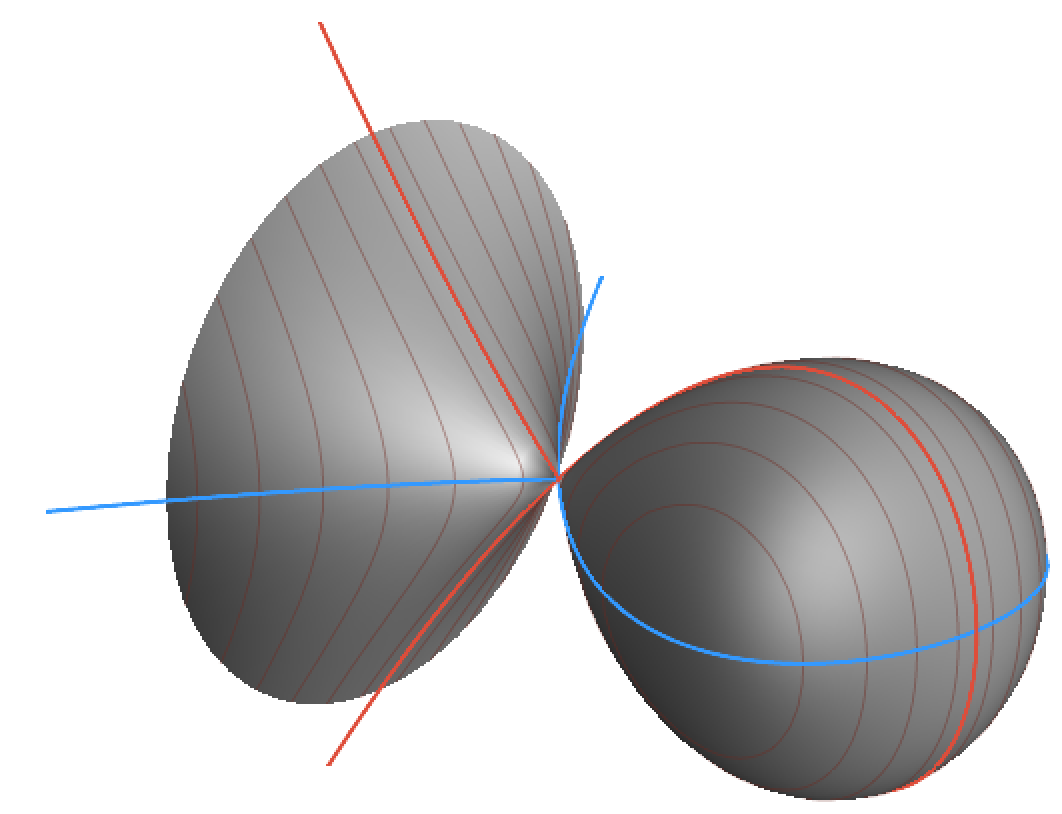}}
 \caption{In the Legendre family of elliptic curves (left), the
   subscheme $E[2]$ (plotted in blue) is tangent to the nodal fiber (red). After a base
   change $q=(q^{1/2})^2$ we get a surface which looks like the
   surface of revolution of the special fiber (right). Blowing up the
   singular point, we get $E[2]$ to intersects the nodal fiber
   transversally in 4 points, counting the identity point at
   infinity. The new nodal fiber is a union of two $\bP^1$ glued at
   two points.}
\label{fLeg}
\end{figure}

\subsubsection{}

In general, a suitable degeneration of $\Ell_\bT(\pt)$ can be
constructed in one step as follows, see Appendix \ref{s_cQ}.
The vector space
$$
\ft = \cochar(\bT) \otimes_\Z \R 
$$
will for now play the role of $\fa$ in Appendix \ref{s_cQ}. 

The construction of $\cE_\cQ$ in Appendix \ref{s_cQ} involves the following choices:
\begin{itemize}
\item a hyperplane arrangement $\mu_i(x) \in \Z$ in $\ft$, which
  refines the periodic fan of $\bX$, 
\item a line bundle $\cQ$ on the nodal fiber, which
  gives the pieces of the dual periodic tessellation of $\ft^*$ their particular shape and position.
\end{itemize}

 We call this data the \emph{periodic fan} and \emph{periodic
   tessellation}, respectively. The former
 describes which weights $\mu_i$ appear in
  \begin{equation}
    \cQ = \sum m_i \, \Qq(\mu_i) + \lambda\,, \quad \lambda \in
    \ft^*_\Q \,, 
\label{excQ}
\end{equation}
while the latter determines the
  multiplicities $m_i$ and the
  fractional linear shift $\lambda$.

  \subsubsection{}

Degenerations of abelian varieties with an ample line bundle is a 
well developed theory due to V.~Alexeev and his predecessors, see
\cite{Alex}. For our bookkeeping purposes,
we allow the fractional shift $\lambda$ and thus the orbifold line bundles of the kind discussed in Section
\ref{s_orderN} and \ref{s_fraction}. 

\subsubsection{} 

The strata $\eta$ of the period fan and the pieces $\eta^\vee$ of the
periodic tessellation are in a natural bijection. They also correspond
to the toric strata $\left\{ O_{\eta} \right\}$ of the nodal fiber of
$\cE_\cQ$.  Those have
the form
$$
O_{\eta} = \bT/\exp(d\eta)\,, 
$$
where $d\eta \subset \ft$ is the tangent space to $\eta$ and
$\exp(d\eta)$ is the corresponding subtorus as in \eqref{logGamma}.
See Appendix \ref{s_Gammaeta}.

\subsubsection{}
As everywhere else in this paper, our focus will be on a special
subtorus $\bA \subset \bT$. In particular, we will focus on the
restriction to $\fa \subset \ft$ of the hyperplane arrangement
corresponding to $\bGamma$. In terms of $q\to 0$ limits of
$\vartheta$-functions, this means we assume $\bnu(t_i)=0$,
that is,
$$
\log t_i = o(\log q)\,,  \quad q\to 0 \,, 
$$
for all
characters $t_i$ of $\bT/\bA$.

\subsection{Nodal K-theory}

\subsubsection{}

% We use the letter $\cQ$ to represent the totality of the 
% choices described in Section \ref{s_chcQ}.

\begin{Definition}
   We define
the nodal K-theory $\Kn_\bT(\pt)_\cQ$ as the central fiber of
$\cE_\cQ$.
\end{Definition}

\subsubsection{}

The stratifications $\{O_{\eta}\}$ and $\{\Ell_\Gamma(\pt)\}$ are
transverse in the sense that the latter intersects
the nodal fiber transversely and 
\begin{equation}
O_{ \Gamma, \eta} \overset{\textup{\tiny def}}= \Ell_\Gamma(\pt)
\cap O_{\eta} \cong 
\begin{cases}
  \Gamma/\exp(d\eta)\,, &  \eta \subset \log \Gamma \,, \\
  \varnothing \,, & \textup{otherwise} \,. 
\end{cases}\label{transv}
\end{equation}

\subsubsection{}\label{s_defKbar0}

By the transversality \eqref{transv}, the construction of
$\Ell_\bT(\bX)$ canonically extends to the zero fiber, and we get
a scheme
$$
\Kn_\bT(\bX)_\cQ  \to \Kn_\bT(\pt)_\cQ \,.
$$
The neighborhood of any stratum $\eta$ in the periodic fan determines a fan in
$\ft/d\eta$. This fan refines the inertia fan of
$$
\bX^\eta \overset{\tiny\textup{def}} = \bX^{\exp(\eta)} \,, 
$$
and hence we can use it to construct
$\Kbar_{\bT/\exp(d\eta)}(\bX^{\eta})$. The following is clear. 

\begin{Proposition} For all $\eta$, we have the following pullback diagram
  \begin{equation}
    \label{unionKbar}
    \xymatrix{ 
       \Kbar_{\bT/\exp(d\eta)}(\bX^{\eta})  \ar[r] \ar[d] &\Kn_\bT(\bX)_\cQ \ar[d] \\
      \overline{O}_\eta\ar[r] & \Kn_\bT(\pt)_\cQ  \,. 
      } 
    \end{equation}
  \end{Proposition}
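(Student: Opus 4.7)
The plan is to reduce the statement to a cell-by-cell check using the equivariant cell decomposition of $\bX$ and the transversality \eqref{transv} that was established in Section \ref{s_defKbar0}. Since both $\Kn_\bT(\bX)_\cQ$ and $\Kbar_{\bT/\exp(d\eta)}(\bX^\eta)$ are built functorially from their respective cells, it is enough to verify the pullback diagram one cell at a time and then observe that the attaching maps respect the construction by the locally-constant-morphism proposition already proved in the previous subsection.

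First I would treat a single equivariant cell $\bT/\Gamma \times D^n$ of $\bX$. Its contribution to $\Ell_\bT(\bX)$ is $\iota_{\Gamma,*}\cO_{\Ell_\Gamma(\pt)}$. Pulling back along $\overline{O}_\eta \to \Kn_\bT(\pt)_\cQ$ picks out, by \eqref{transv}, the subscheme $O_{\Gamma,\eta} = \Gamma/\exp(d\eta)$ precisely when $\eta\subset \log\Gamma$, and nothing otherwise. But the condition $\eta\subset \log\Gamma$ is equivalent to $\exp(\eta)\subset\Gamma$, which is exactly the condition that the cell $\bT/\Gamma\times D^n$ lies in the fixed locus $\bX^\eta = \bX^{\exp(\eta)}$. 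The residual stabilizer of this cell as a $\bT/\exp(d\eta)$-space is precisely $\Gamma/\exp(d\eta)$. Thus, cell by cell, the pullback matches the cell structure of $\Ell_{\bT/\exp(d\eta)}(\bX^\eta)$ viewed over the generic part of $\overline{O}_\eta$.

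Next I would match the partial toric compactifications over the boundary. The $\overline{O}_\eta$ which sits in $\Kn_\bT(\pt)_\cQ$ is glued from the boundary strata $O_{\eta'}$ with $\eta$ in the closure of $\eta'$, i.e.\ by the star of $\eta$ in the periodic fan, projected to $\ft/d\eta$. By the assumption that the periodic fan refines the periodic inertia fan of $\bX$ (Section \ref{s_intertia_fan} combined with the construction of $\cE_\cQ$ in Appendix \ref{s_cQ}), this projected fan refines the inertia fan of $\bX^\eta$ with respect to the residual torus $\bT/\exp(d\eta)$. This is precisely the data used in \eqref{KbarX} to define $\Kbar_{\bT/\exp(d\eta)}(\bX^\eta)$, and so the boundary completion of the cell-by-cell picture above produces exactly $\Kbar_{\bT/\exp(d\eta)}(\bX^\eta)$.

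The main obstacle is purely bookkeeping: one has to check that the fractional shift and multiplicities \eqref{excQ} in the periodic tessellation restrict consistently to the star of $\eta$, so that the resulting local toric data on $\overline{O}_\eta$ agrees with the line bundle data $\cS(T^{1/2}_{\lim},L_{\textup{ind}})$ implicit in the construction of $\Kbar_{\bT/\exp(d\eta)}(\bX^\eta)$. Given Lemma \ref{l_lim} and the compatibility \eqref{Ldplim} already recorded in the compactified K-theory section, this is automatic stratum by stratum, so no further input is needed and the diagram \eqref{unionKbar} is indeed a pullback.
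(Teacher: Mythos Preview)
The paper gives no proof of this Proposition: it is introduced with the words ``The following is clear,'' relying entirely on the sentence just before it (the star of $\eta$ in the periodic fan projects to a fan in $\ft/d\eta$ that refines the inertia fan of $\bX^\eta$) and the transversality \eqref{transv}. Your first two paragraphs are a correct and reasonable unpacking of exactly this: the cell-by-cell check via \eqref{transv} identifies which cells survive the pullback over $O_\eta$ (namely those with $\exp(\eta)\subset\Gamma$, i.e.\ the cells of $\bX^\eta$), and the star-of-$\eta$ description of $\overline{O}_\eta$ supplies the toric compactification that defines $\Kbar_{\bT/\exp(d\eta)}(\bX^\eta)$.

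Your third paragraph, however, is off target. The Proposition is a statement about underlying schemes only; the line bundles $\cS(T^{1/2}_{\lim},L_{\textup{ind}})$ and $\Thn(\cV,L)$ are not defined until the \emph{next} subsection and play no role here. The fractional shift $\lambda$ in \eqref{excQ} affects only the line bundle on the central fiber, not its scheme structure (see Section \ref{s_fraction}), and the multiplicities $m_i$ are already absorbed into the definition of the fan and hence of $\overline{O}_\eta$ itself. So there is no ``main obstacle'' to address---you can simply delete that paragraph, and what remains matches the paper's intent.
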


\subsubsection{}

There is a finer stratification by subschemes of the form
\begin{equation}
K_{\Gamma, \eta,  i} \to O_{\Gamma, \eta} \,, \quad
K_{\Gamma, \eta,  i}
= \Spec K_{\Gamma/\exp(d \eta)}(F_i)
\,, \label{strataKbar}
\end{equation}
where $F_i$ are the connected components of $\bX^{\eta}$.

\subsubsection{Example} 

Suppose $\bT\cong \Ct$, $\cR$ is a field of $\chr\ne 2$,  and
$$
\bGamma = \{1, \mu_2, \bT\}\,, 
$$
where $\mu_2=\{\pm 1\}$ is the group of square roots of
unity. We have 
$$
\log \mu_2 = \{0, \tfrac12\}  + \Z \,, 
$$
and points $\eta \in \log \mu_2$ correspond to two kinds of 1-dimensional strata $\eta^\vee \subset
\ft^*$.  The intervals between points in $\log \mu_2$
correspond to $0$-dimensional strata in $\ft^*$. The combinatorics of
the different strata is illustrated in Figure \ref{ffloor}.

\begin{figure}[!h]
  \centering
  \includegraphics[scale=0.75]{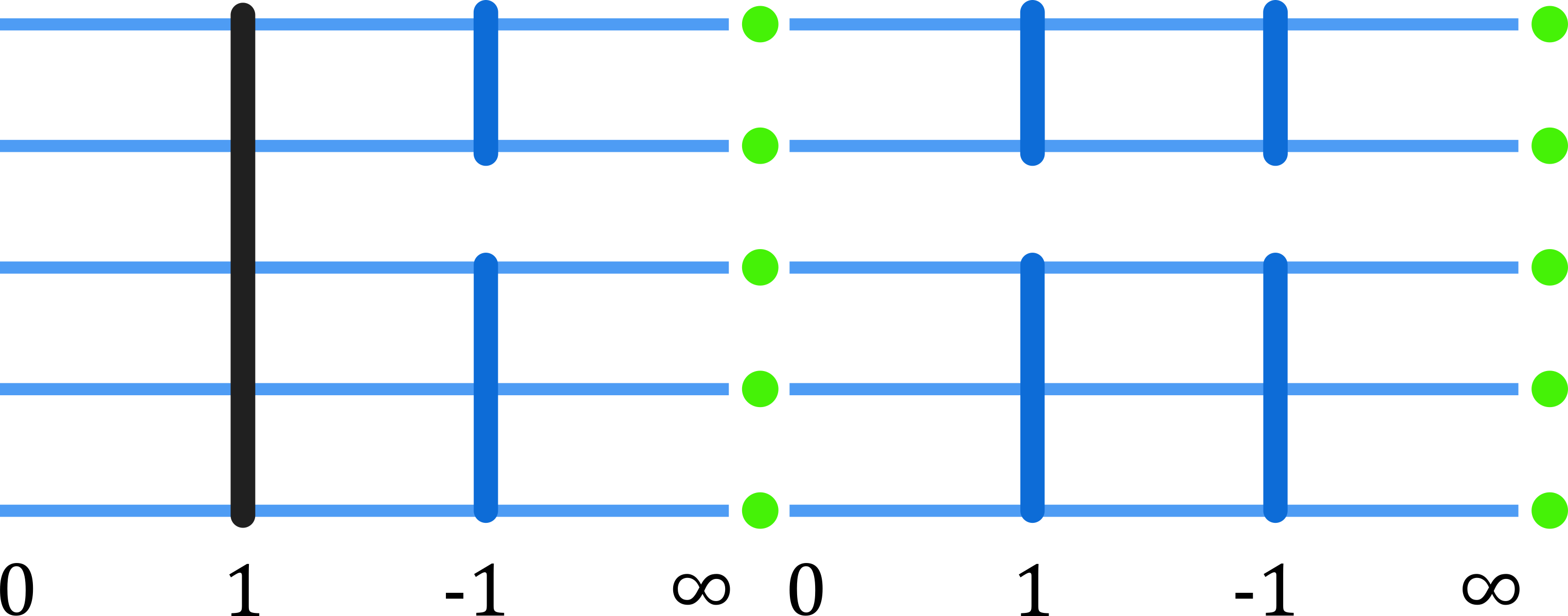}
  \caption{The combinatorics of the strata of $\Kn_{\bT}(\bX)$, assuming
    $\bX$ is connected, $\bX^{\mu_2}$ has two components, which
   break into 2 and 3 components, respectively, in $\bX^\bT$. Strata of
   the form $K_{\bT, \eta,  i}$ are indicated by connected
   components; there are three 1-dimensional ones and ten
   0-dimensional ones among them. The generic fiber over each
   $O_{\eta}$ is $\Spec K(\bX^\bT)$. We have
   $K_{1,0} \cong \Spec K(\bX)$, and the three remaining fibers are
   isomorphic to $\Spec K(\bX^{\mu_2})$.}
\label{ffloor}
\end{figure}

Strata over $O_{0}$ and $O_{\frac12}$ have the form $\Spec K_\bT(\bX)$ and
$\Spec K_\bT(\bX^{\mu_2})$, respectively, where the latter is a double cover of
$\Spec K_{\bT/\mu_2}(\bX^{\mu_2})$. In particular, the fiber $\Spec K(\bX^{\mu_2})$ appears
for a total of three times, corresponding to the nontrivial points of
order 2 of $\Ell_\bT(\pt)$. 

Also note that, as the picture is trying to suggest, the preimage of the
generic point of $ O_{\Gamma, \eta}$ need not be dense in
$K_{\Gamma, \eta,  i}$. For instance, the $\bT$ action on $\bX$ may be
free, in which case $\Kn_{\bT}(\bX)= K_{1,0}$. 

\subsubsection{}

A choice of a component of $X^\bT$ gives a consistent choice of
a component of $X^\eta$ for every $\eta$. The corresponding reduced
subschemes of $\Kn_\bT(X)_\cQ$ assemble into a copy of
$\Kn_\bT(\pt)_\cQ$. 

In a superficial parallel with the theory of buildings, these reduced
subscheme perhaps
may be called the \emph{floors} of $\Kn_\bT(X)$.  The subsets
\eqref{unionKbar} of maximal dimension,
which extend over several floors and share walls with their neighbors, may
probably be called the \emph{units} of $\Kn_\bT(X)$.  If
nodal K-theory proves to be a useful notion, people may 
find it
convenient to refer to different parts of $\Kn_\bT(X)$ as slabs,
beams, stairwells etc. A floor plan of a real $\Kn_\bT(X)$ may be
seen in Figure \ref{ftriangle}.

\subsubsection{}

Conversely, given
$$
\cO_{\Kn_{\bT}(X)_\cQ}\in \Coh_{\bGamma} \Kn_\bT(\pt)_\cQ
$$ 
it extends canonically in a constant way discussed above to $\cO_{\Ell_\bT(X)}\in \Coh_{\bGamma} \cE$
for a Tate elliptic curve. This is, effectively,
Grojanowski's original construction of the equivariant
elliptic cohomology over $\C$ in a Tate curve context.

\subsection{Theta bundles and stable envelopes}

\subsubsection{}

As defined in Section \ref{s_defKbar0}, the scheme $\Kn_{\bT}(\bX)_\cQ$
has a stratification and a line bundle inherited from
$\Kn_{\bT}(\pt)_\cQ$.  This construction may be generalized as
in the construction of compactified K-theory, see in particular Section
\ref{s_replace_polarization}. 

Let $\cV\in K_\bT(X)$ be a virtual vector bundle on $\bX$ such that all normal weights to
$\bX^\bT$ appear as weights of $\cV\big|_{\bX^\bT}$.  For instance, one 
can take $\cV=T\bX$. If we only compactify directions in $\bA$, we may
take
$$
\cV = T^{1/2} \,.
$$
We consider the periodic fan in $\ft$ defined by the weights
of $\cV\big|_{\bX^\bT}$. 

%Let $L\in \Pic_\bT(X) \otimes \Q$ be a fractional line bundle on $X$.

\subsubsection{}

For every stratum $\eta$ in the periodic fan, set
$$
\cV^\eta = \left(\cV\big|_{\bX^\eta} \right)^\eta
$$
Let $\eta\to \eta'$ indicate that $\eta$ is in the closure of $\eta'$.
Note the reversal of the arrow, which reflects the fact that 
$O_{\eta'}$ is in the closure of $O_{\eta}$. As in
Lemma \ref{l_lim}, we have
$$
\lim_{\eta\to \eta'} \Ld \,\,  \cV^{\eta} = \pm \det
\delta_{\eta,\eta'} \cV \otimes \Ld \,\,  \cV^{\eta'} \,, \quad
\delta_{\eta,\eta'} \cV = \cV^\eta\big|_{\bX^{\eta'},>0} \,. 
$$

\begin{Lemma}
There is a unique, up to multiple, assignment of a line bundle
$\bla_\eta \in \Pic_\bT(\bX)$ to each stratum $\eta$ such that
\begin{equation}
\det
\delta_{\eta,\eta'}  = \bla_{\eta} \otimes \bla_{\eta'}^{-1} \,. \label{bnubnu}
\end{equation}
\end{Lemma}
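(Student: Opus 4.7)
The relation $\det\delta_{\eta,\eta'} = \bla_\eta \otimes \bla_{\eta'}^{-1}$ on each edge $\eta \to \eta'$ is a coboundary equation for the $0$-cochain $\eta \mapsto \bla_\eta$ on the face poset of the periodic fan in $\ft$, with target the $1$-cochain $(\eta\to\eta') \mapsto \det\delta_{\eta,\eta'}$ (read, on the right-hand side, after restriction to $\bX^{\eta'}$). Uniqueness up to a multiple is immediate: if $\{\bla_\eta\}$ and $\{\bla'_\eta\}$ are two solutions, then $\bla_\eta \otimes (\bla'_\eta)^{-1}$ is constant across every edge (the $\det\delta_{\eta,\eta'}$ factors cancel), hence constant on the connected face poset, so equal to a single $L \in \Pic_\bT(\bX)$.

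\textbf{Cocycle.} For existence, the main step is to verify the cocycle identity on every three-chain $\eta \to \eta' \to \eta''$:
\begin{equation*}
\det\delta_{\eta,\eta''} \,=\, \det\delta_{\eta,\eta'}\big|_{\bX^{\eta''}} \otimes \det\delta_{\eta',\eta''}.
\end{equation*}
Taking determinants, this reduces to the $K$-theoretic equality $\delta_{\eta,\eta''}\cV = \delta_{\eta,\eta'}\cV\big|_{\bX^{\eta''}} + \delta_{\eta',\eta''}\cV$ in $K_\bT(\bX^{\eta''})$. I would prove it by weight bookkeeping: pick generic cocharacters $\sigma_1$ pointing from $\eta$ into $\eta'$ and $\sigma_2$ from $\eta'$ into $\eta''$, and analyze the combined direction $\sigma_1 + \varepsilon\sigma_2$ with $\varepsilon \to 0^+$. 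A $\bT$-weight $\mu$ of $\cV^\eta\big|_{\bX^{\eta''}}$ is attracting along this flow iff either $\langle\mu,\sigma_1\rangle > 0$, or $\langle\mu,\sigma_1\rangle = 0$ and $\langle\mu,\sigma_2\rangle > 0$. The first case captures precisely the weights of $\delta_{\eta,\eta'}\cV\big|_{\bX^{\eta''}}$ (those not fixed by $\exp(d\eta')$), and the second those of $\delta_{\eta',\eta''}\cV$ (which lie in $\cV^{\eta'}$). This mirrors the limit calculation underlying Lemma \ref{l_lim}; any residual sign discrepancy is absorbed into the ``up to a multiple'' ambiguity.

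\textbf{Construction and main obstacle.} Assuming the cocycle identity, I would fix the chamber $\fC$ as base stratum, set $\bla_\fC = \cO_\bX$, and propagate $\bla_\eta = \bla_{\eta'} \otimes \det\delta_{\eta,\eta'}$ along any chain in the poset from $\fC$ down to $\eta$. The cocycle identity together with contractibility of the face poset of the periodic fan guarantees path-independence. The genuine technical obstacle is the lifting step: $\det\delta_{\eta,\eta'}$ naturally lives in $\Pic_\bT(\bX^{\eta'})$, while the lemma requires $\bla_\eta \in \Pic_\bT(\bX)$. I would resolve this by decomposing $\cV$ on all of $\bX$ as a $\bT$-equivariant bundle into weight summands for the subtorus $\exp(d\eta') \subset \bT$ (which acts on all of $\cV$ and whose weight eigenspaces exist globally since it is central in the abelian $\bT$), and selecting those summands with the combinatorial sign pattern in $\sigma_1$ that defines $\delta_{\eta,\eta'}\cV$. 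Their determinant is a canonical element of $\Pic_\bT(\bX)$ restricting to $\det\delta_{\eta,\eta'}$ on $\bX^{\eta'}$; inserting these global lifts into the telescoping product produces the desired $\bla_\eta$ and completes the proof.
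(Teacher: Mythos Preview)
Your core strategy matches the paper's proof exactly: frame the problem as showing that $(\eta\to\eta')\mapsto\det\delta_{\eta,\eta'}$ is a trivial $1$-cocycle on the adjacency graph of strata, and verify the cocycle identity on triangles $\eta\to\eta'\to\eta''$. The paper does this in one line by observing that the limit operation factors as $\lim_{\eta\to\eta''}=\lim_{\eta'\to\eta''}\circ\lim_{\eta\to\eta'}$, which immediately gives $\det\delta_{\eta,\eta''}=\det\delta_{\eta,\eta'}\otimes\det\delta_{\eta',\eta''}$. Your weight-bookkeeping argument with cocharacters $\sigma_1,\sigma_2$ is a correct and more explicit unpacking of the same identity.

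There is, however, a genuine error in your final paragraph. You claim that $\cV$ decomposes globally on $\bX$ into weight summands for the subtorus $\exp(d\eta')$ ``since it is central in the abelian $\bT$''. This is false: centrality in $\bT$ is automatic and irrelevant. A $\bT$-equivariant vector bundle on $\bX$ decomposes into $\bA'$-weight subbundles only where $\bA'$ acts \emph{trivially on the base}, i.e.\ on $\bX^{\bA'}$, not on all of $\bX$. (Take $\bX=\bP^1$, $\bT=\Gm$ acting by scaling, $\cV=T\bP^1$: the weights at the two fixed points are $\pm1$, and there is no global splitting.) So your proposed lift of $\det\delta_{\eta,\eta'}$ to $\Pic_\bT(\bX)$ does not work.

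That said, the obstacle you are trying to overcome is not one the paper addresses either: its proof stops at the cocycle identity. In the subsequent use of $\bla_\eta$ (as a slope in $\Kbar(\bX^\eta)_{\cV^\eta,\bla_\eta}$ and in $\cS(\cV^\eta,L\otimes\bla_\eta)$), only the restriction of $\bla_\eta$ to $\bX^\eta$ matters, and the cocycle equation itself lives on $\bX^{\eta'}$. So the insistence on $\bla_\eta\in\Pic_\bT(\bX)$ should not be read too strictly; your attempt to force a global lift is solving a problem that does not need solving, and your method for doing so is incorrect.
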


\begin{proof}
We need to show that $\delta_{\eta,\eta'}$ is a trivial 1-cocycle on
the adjacency graph of the strata $\eta$. Loops in this graph are
generated by triangles of the form $\eta\to \eta' \to \eta''$. Since
$$
\lim_{\eta \to \eta''} = \lim_{\eta' \to \eta''}  \circ \lim_{\eta \to
  \eta'}
$$
we have
$$
\det
\delta_{\eta,\eta''} = \det
\delta_{\eta,\eta'} \otimes \det
\delta_{\eta',\eta''}
$$
thus showing the triviality of the cocycle. 
\end{proof}

\noindent 
We will normalize the choice of $\bla$ so that $\bla_0$ is trivial. 

\subsubsection{}

\begin{Definition}
  We define the nodal K-theory of $\bX$ as the union
  \begin{equation}
  \Kn_\bT(\bT)_{\cV}  = \bigcup_\eta \Kbar(\bX^\eta)_{\cV^\eta,
    \bla_\eta} \,.
\label{KnX}
\end{equation}
  Given a a fractional line bundle $L\in \Pic_\bT(X) \otimes \Q$, we
  denote by $\Thn(\cV,L)$ the orbifold line bundle on \eqref{KnX}
  obtained by gluing $\cS(\cV^\eta, L \otimes \bla_\eta)$. 
\end{Definition}

\noindent
Note that for every pair $\eta \to \eta'$ there is an inclusion among
the corresponding terms in \eqref{KnX}. Thus \eqref{KnX} is
effectively a union over $0$-dimensional strata $\eta$.

\subsubsection{Example}

Take $\bX=\bP^2$ and $\cV=T \bX$, with the action of
$$
\bT = \diag (a_1, a_2, a_3) \subset PGL(3) \,. 
$$
Its weights at fixed points are cyclic permutations of
$\{a_2/a_1,a_3/a_1\}$, and the corresponding tiling of each floor is
by parallelograms with these sides (often called lozenges), see Figure
\ref{ftriangle}.

Since $\{a_2/a_1,a_3/a_1\}$ is a basis of $\chr(\bT)$, each lozenge corresponds to a copy of $\bP^1 \times
\bP^1$ in $\Kn_{\bT}(\bX)$ with a line bundle that is
isomorphic to $\cO_{\bP^1}(1) \boxtimes \cO_{\bP^1}(1)$. 

The inertia lattice $\bGamma$ has the form 
$$
\bGamma = \left\{ \raisebox{1.5cm}{\xymatrix{
 & \bT \\
  \{a_1=a_2\}  \ar[ur] &  \{a_1=a_3\} \ar[u]  &  \{a_2=a_3\} \ar[ul]\\
 & \{1\} \ar[ul] \ar[u] \ar[ur] 
} }\right\} \,. 
$$
The stratum corresponding to the trivial subgroup $\{1\}$ is at the
center of each lozenge. All three floors are glued together there like
the three coordinate 2-planes in 3-space. Two
floors are glued together where $a_i=a_j$. See Figure
\ref{ftriangle}. 

\begin{figure}[!h]
  \centering
  \includegraphics[scale=0.16]{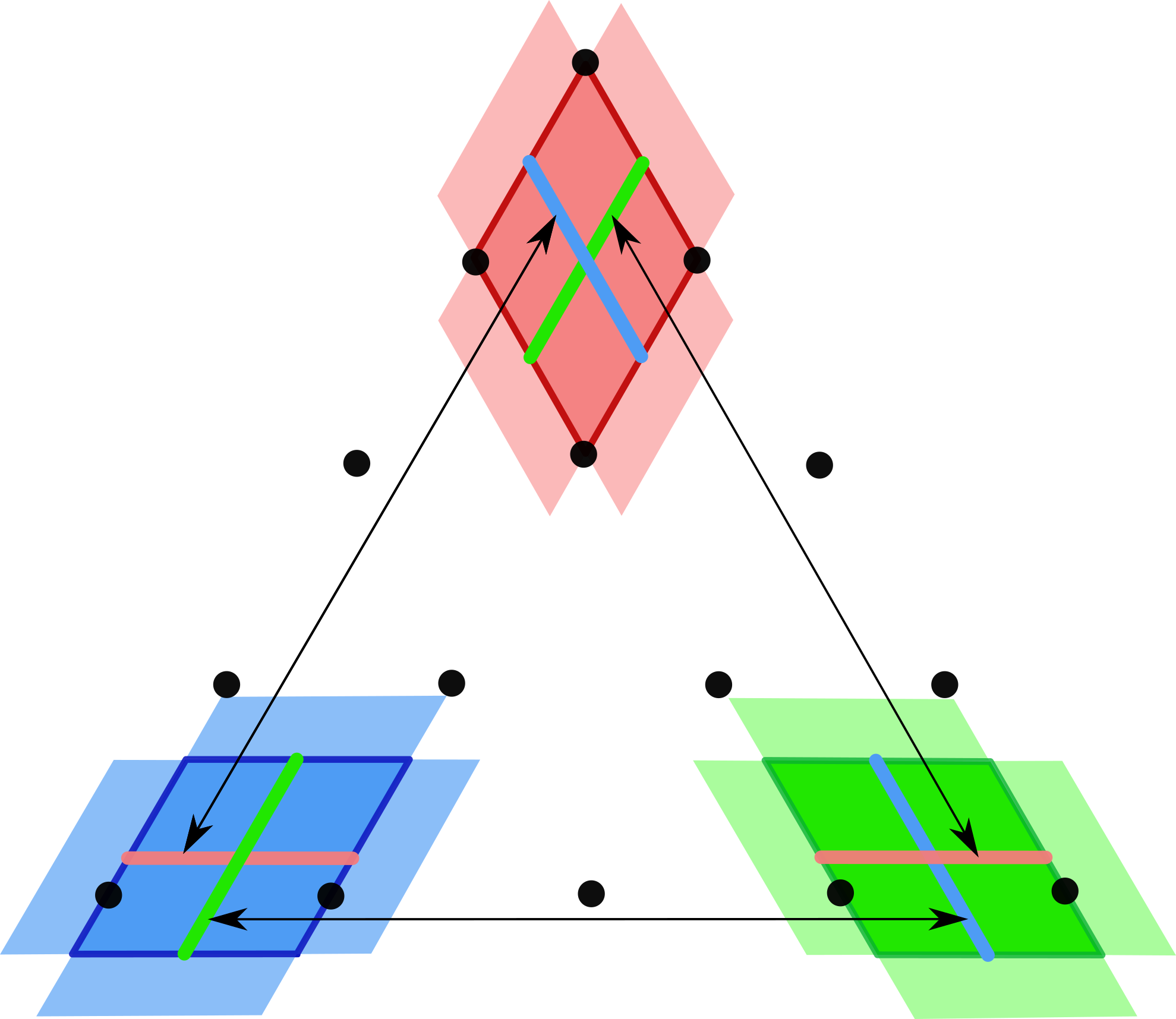}
 \caption{The floors of $\Kn_{\bT}(\bP^2)_{T\bP^2,L}$ plotted with
   respect to 
   the lattice $\chr \bT\subset \ft^*$. Each floors is tiled by lozenges with sides
   that are cyclic permutations of  $\{a_2/a_1,a_3/a_1\}$. The tilings
   are shifted with respect to each other by a weights of a fractional
   multiple $L$ of $\cO_{\bX}(1)$, visible as a large triangle (i.e.\ the
   toric polytope of $\bX$ itself) of
   fractional size. Arrows indicate gluing. Note that since this
   picture is in the dual of $\Lie \bT$, the gluing is not by
   embedding, but rather by projection along $\Gamma^\perp$. 
   }
\label{ftriangle}
\end{figure}

\subsubsection{}

\begin{Proposition}
The scheme $\Kn_{\bT}(\bX)_{\cV}$ with the line bundle $\Thn(\cV,L)$ deforms to $\Ell_\bT(\bX)$ with a
line bundle which is a K\"ahler shift of $\Theta(\cV)$. 
\end{Proposition}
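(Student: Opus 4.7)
The plan is to realize the deformation as a flat family $\pi\colon \mathcal{X}\to \Spec\cR[[q]]$ together with a relative line bundle $\mathcal{T}$, whose generic fiber is $(\Ell_\bT(\bX),\Theta(\cV)\otimes (\text{K\"ahler shift}))$ and whose central fiber is $(\Kn_\bT(\bX)_\cV,\Thn(\cV,L))$. The construction uses the degeneration $\cE_\cQ\to \Spec\cR[[q]]$ of Appendix \ref{s_cQ}, where the defining datum $\cQ$ is arranged to match $(\cV,L)$: I choose the hyperplane arrangement in $\ft$ so that it refines the periodic inertia fan and contains the weights of $\cV\big|_{\bX^\bT}$, the multiplicities $m_i$ in \eqref{excQ} so that the dual tessellation encodes the characters of $\cV$, and the fractional shift $\lambda\in \ft^*_\Q$ so that it encodes the slope $L$.

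\smallskip

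With $\cE_\cQ$ fixed, I first extend the scheme. Because $\cO_{\Ell_\bT(\bX)}$ lies in the $q$-independent category $\Coh_{\bGamma_\bX}\cE$, it extends canonically across $q=0$; by the transversality \eqref{transv}, on each toric stratum $O_\eta\subset \Kn_\bT(\pt)_\cQ$ the extension agrees with $\Kbar_{\bT/\exp(d\eta)}(\bX^\eta)$, producing the total space $\mathcal{X}$. Next I extend the line bundle. On the generic fiber the candidate is $\Theta(\cV)$ twisted by the K\"ahler line bundle $\cU(L,z)$ (plus a further twist by $\Theta$ of the characters appearing in $\lambda$), so that the resulting class is integrally polarized across the family. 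Using the standard $q\to 0$ asymptotics of the theta factors $\vartheta(a^\mu q^{\langle \mu,\bnu\rangle})$, the order of vanishing of this section along each component of the special fiber is exactly the piecewise-linear function
$$
\ord = \text{maximal weight in } L\otimes (\Ld\, \cV^\eta)^\vee,\quad a\to 0_{\fC'},
$$
that was used in Section \ref{sec:stable-envel-equiv} to define $\cS(\cV^\eta,L)$.

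\smallskip

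The last step is to identify the restriction of $\mathcal{T}$ to the $\eta$-th stratum with $\cS(\cV^\eta, L\otimes \bla_\eta)$, rather than with $\cS(\cV^\eta, L)$. This comes from iterating Lemma \ref{l_lim}: passing from the floor at the origin of $\ft$ to the floor at $\eta$ converts the induced polarization into the limit polarization, introducing precisely the discrepancy $\det\delta_{0,\eta}$, which is $\bla_\eta\otimes\bla_0^{-1}=\bla_\eta$ under the normalization $\bla_0=\cO$. The fact that these local models glue into a single object $\Thn(\cV,L)$ is exactly the triviality of the cocycle $\{\det\delta_{\eta,\eta'}\}$ proved in the Lemma just above the definition of $\Thn(\cV,L)$. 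The main obstacle is this bookkeeping: one has to show that the same data $(\cV, L)$ which produces the family of line bundles from the theta/K\"ahler side matches the normalizations $\bla_\eta$ of the nodal side stratum by stratum and across walls $\eta\to\eta'$. Once the cocycle match is verified, the deformation claim follows by taking the central fiber of $\mathcal{T}$.
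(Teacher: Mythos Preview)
Your argument is correct in outline and uses the same ingredients as the paper --- the degeneration $\cE_\cQ$, the transversality \eqref{transv}, the $q$-independent category $\Coh_{\bGamma}\cE$, and the cocycle computation for $\bla_\eta$ --- but you organize them differently from the paper's proof.

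The paper does not work globally on $\bX$ and then check compatibility stratum by stratum. Instead it reduces everything, via the $\bT$-equivariant cell decomposition, to pieces $\Kn_\Gamma(Y)_\cV$ on which $\Gamma$ acts \emph{trivially} on $Y$. On such a piece there is nothing to do beyond the appendix: it is literally a constant thickening (by $K(Y)$) of the degeneration of $\Ell_\Gamma(\pt)$ already constructed in Appendix~\ref{s_cQ}, and the polarizing line bundle is visibly a K\"ahler shift of $\Theta(\cV)$ on that base. Because the degeneration of each cell is canonical, the attachment maps automatically carry over. This bypasses your explicit $\vartheta$-asymptotic computation and your stratum-by-stratum matching of $\cS(\cV^\eta,L\otimes\bla_\eta)$: those facts are still true, but in the paper's organization they are consequences rather than steps of the proof.

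What your route buys is an explicit identification of the line bundle on each floor, which is useful if one wants formulas. What the paper's route buys is brevity and a cleaner reason why everything glues: the gluing is inherited from the topological attachment maps of cells, not verified combinatorially via the cocycle $\{\det\delta_{\eta,\eta'}\}$.
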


\begin{proof}
The cell decomposition of $\bX$ exhibits $\Kn_{\bT}(\bX)_{\cV}$ as
built out of pieces $\Kn_{\Gamma}(Y)_{\cV}$, where $\Gamma$ acts
trivially on $Y$. Thus each of the pieces 
is a constant thickening of a degeneration of 
$\Ell_{\Gamma}(\pt)$, polarized by a K\"ahler shift of $\Theta(\cV)$. Since the deformation is canonical, it is
respected by the attachment maps. 
\end{proof}

% \subsubsection{}

% Let $\bT'\subset \bT$ be a subtorus. By construction, we have an open embedding 
% %
% \begin{equation}
% \Kn_{\bT'}(\pt)_{\cQ\big|_{\bT'}} \times (\bT/\bT') \subset 
% \Kn_{\bT}(\pt)_{\cQ} \,. \label{Tp_open}
% \end{equation}
% %
% We may consider a partially nodal $\bT$-equivariant K-theory
% of $\bX$, in which we only glue strata over the open set
% \eqref{Tp_open}. This one can do with weaker assumptions on $V$,
% namely we need that all $\bT'$-weights of $N_{\bX/\bX^{\bT'}}$ appear as
% weights of the restriction of $V$ to the fixed locus.

\subsubsection{}
Recall from the discussion of compactified K-theory that it is
possible to compactify only directions in a subtorus $\bA \subset
\bT$. We can do the same in \eqref{KnX}, in which case the periodic
fan lies in $\fa$ and we can take $\cV= T^{1/2}$. 

\subsubsection{}
  \begin{Theorem}
    Stable envelopes for $\bA$-action on $\bX^\eta$ glue to a section of
    $\Thn(T^{1/2},L)$ and deform to elliptic stable envelopes for
    $\bX$. 
  \end{Theorem}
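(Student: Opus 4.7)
The plan is to prove the theorem in two stages: first, to check that the K-theoretic stable envelopes on the $\bX^\eta$ are compatible on the overlaps of the pieces in \eqref{KnX}, giving a global section of $\Thn(T^{1/2}, L)$ on $\Kn_\bT(\bX)_{T^{1/2}}$; and second, to realize this section as the $q=0$ specialization of the elliptic stable envelope of $\bX$ constructed by Theorem \ref{t1}.

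For the gluing step, I would proceed stratum by stratum along the periodic fan. For every maximal stratum $\eta$, Proposition \ref{p_stable_compact} applied to the $\bA$-action on $\bX^\eta$, with polarization $\cV^\eta = (T^{1/2}|_{\bX^\eta})^\eta$ and slope $L \otimes \bla_\eta$, produces a section of $\cS(\cV^\eta, L \otimes \bla_\eta)$ on $\Kbar_\bT(\bX^\eta)$ whose restriction to each boundary $\overline{O}_\eta \cap \overline{O}_{\eta'}$ is again the stable envelope for the further fixed locus $\bX^{\eta'}$, but computed with the $\eta$-limit polarization $\cV^\eta_{\lim}$ of \eqref{limitpo}. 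By \eqref{limitpo2}, the limit polarization and the induced polarization $\cV^{\eta'}$ differ by $\delta_{\eta,\eta'}\cV - (\delta_{\eta,\eta'}\cV)^\vee$, and by \eqref{Ldplim} this means the underlying line bundles are related by tensoring with $\det \delta_{\eta,\eta'}\cV = \bla_\eta \otimes \bla_{\eta'}^{-1}$, which is exactly the cocycle built into the definition of $\Thn(T^{1/2},L)$. The matching of the sections (and not merely of the line bundles) follows because Lemma \ref{l_res_Attr} identifies the $a \to 0_{\fC'}$ limit of the normalization \eqref{norm_stab} with the same normalization for the limit polarization. Thus the sections on the various $\Kbar_\bT(\bX^\eta)$ patch to a global section of $\Thn(T^{1/2},L)$ on $\Kn_\bT(\bX)_{T^{1/2}}$.

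For the deformation step, I would use the flat degeneration from the preceding proposition: $\Kn_\bT(\bX)_{T^{1/2}}$ is the central fiber of a flat family over $\Spec \cR[[q]]$ whose generic fiber is $\Ell_\bT(\bX)$, with the line bundle $\Thn(T^{1/2},L)$ deforming to the K\"ahler shift of $\Theta(T^{1/2})$ corresponding to the slope $L$. Theorem \ref{t1} produces the elliptic stable envelope $\Stab$ on the generic fiber, supported on $\fAttr$ and uniquely characterized by the interpolation property \eqref{Stab}. Flatness of the family and the transversality \eqref{transv} of the inertia subschemes to the nodal fiber ensure that the short exact sequence \eqref{shortexact} specializes correctly to $q=0$, giving at each inductive step the toric vanishing of Section \ref{s_Dellam} in place of \eqref{vanishA}. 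Consequently, the specialization of $\Stab$ to $q=0$ satisfies the nodal analog of the interpolation property on each stratum, and by uniqueness (now an application of Proposition \ref{p_int_toric} piece by piece) it agrees with the glued section built in the first step. Running the inductive proof of Theorem \ref{t1} within the family also works and gives the same conclusion.

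The main obstacle is the combinatorial bookkeeping around the gluings, and specifically the consistency of the dynamical shifts $\bla_\eta$ with the polarization shifts $\det \delta_{\eta,\eta'}\cV$ around every triangle $\eta \to \eta' \to \eta''$, together with the compatibility of the half-integer prefactor $(\det N_{<0}/\det N^{1/2})^{1/2}$ in \eqref{norm_stab} under iterated limits. Once Lemma \ref{l_lim}, Lemma \ref{l_res_Attr}, and the cocycle equation for $\bla$ are assembled carefully, the patching is forced, and the rest of the argument is a standard flat specialization calculation.
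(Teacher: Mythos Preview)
Your proposal is correct and tracks the paper's proof closely. The gluing step is exactly the content of Proposition \ref{p_stable_compact} (you have simply unpacked the cocycle bookkeeping that the paper leaves implicit), and your approach (b) to the deformation step---running the inductive construction over the family, relying on the semicontinuity of the cohomology vanishing---is precisely what the paper does.

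The one mild difference is directional: your primary deformation argument (a) specializes the elliptic stable envelope from the generic fiber to $q=0$ and invokes uniqueness of the K-theoretic stable envelope there, whereas the paper deforms the glued nodal section outward from $q=0$ and invokes uniqueness of the elliptic stable envelope on the generic fiber. Both directions work once the cohomology vanishing is known over the whole family, and you already note the family argument as an alternative, so there is no gap.
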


  \begin{proof}
    The first claim follows from Proposition
    \ref{p_stable_compact}. Inductive construction of stable envelopes
    is based on cohomology vanishing, which is semicontinuous. Thus they
    deform to sections of  a K\"ahler shift of $\Theta(T^{1/2})$. By
    uniqueness, these sections are elliptic stable envelopes for
    $\bX$. 
  \end{proof}

  This results differs by a change of perspective only from an earlier
  result of Kononov and Smirnov, proven in \cite{KonSmi} with somewhat different
  hypotheses and using different means.

  \begin{Theorem}{\cite{KonSmi}} The nodal limit of elliptic stable envelopes for
    $\bX$ gives elliptic stable envelopes for $\bX^\Gamma$, $\Gamma
    \subset \bA$. 
  \end{Theorem}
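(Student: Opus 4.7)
The plan is to reduce the statement to the preceding theorem and to the uniqueness of elliptic stable envelopes in Theorem \ref{t1}. The idea is that the nodal degeneration of $\Ell_\bT(\bX)$ provides a scheme with strata indexed by $\eta$ in the periodic fan, and restriction to the stratum with $\eta = \log \Gamma$ yields the corresponding geometry for $\bX^\Gamma$.

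First, I would unpack the nodal limit. By the preceding theorem, the elliptic stable envelope $\Stab^\bX$ for $\bX$, viewed as a section of $\Theta(T^{1/2}) \otimes L$-twist on $\Ell_\bT(\bX \times \bX^\bA)$, degenerates under $q \to 0$ to a section of $\Thn(T^{1/2}, L)$ on $\Kn_\bT(\bX)_{T^{1/2}}$. By the construction \eqref{KnX} of nodal K-theory, the latter scheme is a union over strata $\eta$ of the pieces $\Kbar_{\bT/\exp(d\eta)}(\bX^\eta)$, and the previous theorem identifies the restriction of this limit to each piece with the compactified K-theory stable envelope for $\bX^\eta$ (with limit polarization and appropriately shifted slope).

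Second, I would specialize this to $\eta = \log \Gamma$ for a subgroup $\Gamma \subset \bA$. The corresponding stratum is $\Kbar_{\bT/\Gamma}(\bX^\Gamma)$, and by Proposition \ref{p_stable_compact} the restriction of $\Stab^\bX$ there is, after the appropriate limit/induced polarization identification of Lemma \ref{l_lim} and the shift \eqref{Ldplim}, a compactified K-theory stable envelope for the $\bA$-action on $\bX^\Gamma$. The normalization on the diagonal follows from Lemma \ref{l_res_Attr}, and the support condition $\fAttr$ restricts correctly because each chain of attracting closures for $\bA$ on $\bX$ that lies inside $\bX^\Gamma$ is precisely a chain for the induced $\bA/\Gamma$-action on $\bX^\Gamma$.

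Third, I would apply the preceding theorem in reverse, now to $\bX^\Gamma$ in place of $\bX$: the section of $\Thn(T^{1/2}_{\bX^\Gamma}, L)$ assembled from these compactified stable envelopes deforms uniquely, by semicontinuity and the uniqueness half of Theorem \ref{t1}, to the elliptic stable envelope for $\bX^\Gamma$. Combining the two deformations shows that the nodal limit of $\Stab^\bX$ restricted to the $\Gamma$-stratum must be the nodal limit of the elliptic stable envelope for $\bX^\Gamma$. Uniqueness upstairs then identifies the two.

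The main obstacle is a bookkeeping one: reconciling the various K\"ahler and polarization shifts that enter \eqref{bnubnu}, \eqref{Ldplim}, and \eqref{norm_stab} when one passes from $\bX$ through the stratum $\eta = \log \Gamma$ to $\bX^\Gamma$, so that the "induced" data on $\bX^\Gamma$ coming from the nodal limit of $\bX$ matches the "intrinsic" data used to define the elliptic stable envelope of $\bX^\Gamma$. Once this matching is made explicit, the proof is essentially a corollary of the preceding theorem and Proposition \ref{p_stable_compact}; the commutation of the two limits (nodal degeneration and fixed-locus restriction) is guaranteed by the transversality \eqref{transv}.
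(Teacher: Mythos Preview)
The paper does not give a separate proof of this statement: it simply records it as a theorem of Kononov--Smirnov and remarks that it ``differs by a change of perspective only'' from the preceding theorem. Your proposal is precisely an attempt to spell out that change of perspective, and the overall architecture is correct: the nodal limit of $\Stab^\bX$ is, by the preceding theorem, the glued collection of K-theoretic stable envelopes on the strata $\Kbar(\bX^\eta)$; restricting to the sub-union of strata that assembles $\Kn_\bT(\bX^\Gamma)$ and then applying the preceding theorem to $\bX^\Gamma$ identifies this restriction with the nodal limit of $\Stab^{\bX^\Gamma}$.

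One imprecision worth fixing: you write ``specialize to $\eta=\log\Gamma$'', but $\log\Gamma$ is a periodic union of affine subspaces, not a single stratum of the periodic fan. What you actually want is the union of strata $\eta$ with $\eta\subset\log\Gamma$ (equivalently $\Gamma\subset\exp(\eta)$); by \eqref{unionKbar} this sub-union is exactly $\Kn_\bT(\bX^\Gamma)$ sitting inside $\Kn_\bT(\bX)$, with the periodic fan for $\bX^\Gamma$ being the restriction of that for $\bX$ to $\log\Gamma$. With this said, your ``third step'' is not really applying the preceding theorem in reverse: the preceding theorem already contains both directions (the K-theoretic envelopes glue \emph{and} deform uniquely to the elliptic ones), so once you have identified the restriction with the glued K-theoretic envelopes for $\bX^\Gamma$, you are done. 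The bookkeeping of polarization and slope shifts that you flag as the main obstacle is real but is handled by Lemma~\ref{l_lim} and \eqref{Ldplim}, exactly as in the proof of Proposition~\ref{p_stable_compact}.
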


  \appendix

  \section{Constructions in elliptic cohomology}\label{s_constr}

 \subsection{Chern and Thom classes} 

\subsubsection{Chern classes}

A $\bT$-equivariant  complex vector bundle $V$ of rank $r$ over $\bX$ defines a map
\begin{equation}
c: \Ell_\bT(\bX) \to \Ell_{GL(r)}(\pt) = S^r E \,, \label{defc}
\end{equation}
see Section (1.8) in \cite{GKV} and Section 5 in \cite{Ganter}. 
The coordinates in the target of \eqref{defc} are 
symmetric functions on $E^r$ --- symmetric functions in elliptic 
Chern roots.

\subsubsection{Thom classes}

The Thom class of $V$ is, by definition, 
$$
\Theta(V) = c^* \cO(D_\Theta) 
$$
where $c$ is the map \eqref{defc} and 
the divisor 
\begin{equation}
D_\Theta = \{0\} + S^{r-1} E \subset  S^{r} E\label{DTh}
\end{equation}
is 
formed by those $r$-tuples that contain $0$.

\subsubsection{} 

Note that since
$D_\Theta$ is effective,  we have a canonical section 
% %
% \begin{equation}
% s_\Theta: \cOd_{\Ell_\bT \bX} \to \Theta(V)\label{sTh} \,. 
% \end{equation}
% %
%
\begin{equation}
\vth_V: \cO_{\Ell_\bT \bX} \to \Theta(V)\label{sTh} \,. 
\end{equation}
This section may be interpreted as the Euler class of the vector bundle $V$ --- the
elliptic cohomology class assigned to the locus cut out by a regular section of
$V$. 

\subsubsection{}

Since 
$$
\Theta(V_1 \oplus V_2)  = \Theta(V_1) \otimes \Theta(V_2) \,, 
$$
we have a group homomorphism
\begin{equation}
\Theta: K_\bT(\bX) \to \Pic \left(\Ell_\bT(\bX)\right)  \,. 
\label{Thom_map}
\end{equation}
Also observe that
\begin{equation}
\Theta(V^\vee)= \Theta(V) \label{ThVdual}
\end{equation}
because the divisor \eqref{DTh} is centrally symmetric. The canonical
section \eqref{sTh} changes sign by $(-1)^{\rk V}$. 
% We have
% $$
% \vth_{V^\vee} = (-1)^{\rk V} \, \vth_V \,, 
% $$

\subsubsection{}

The rank $\rk V$ of a K-theory class on $\bX$ is a locally constant
function $\bX \to \Z$, that is, an element of $H^0(\bX,\Z)$. 
We denote
$$
\fmo  = \Ker (K_\bG(\bX) \xrightarrow{\, \, \rk \, \,} H^0(\bX,\Z)) \,. 
$$
This ideal is the first step in the filtration of $K_\bT(\bX)$ by the
codimension of support.

\begin{Lemma}\label{l_cube} 
  The map $V \mapsto \Theta(V)$ factors through the quotient
  $K_\bG(\bX)/\fmo^3$. 
\end{Lemma}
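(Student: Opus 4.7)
The plan is to reduce, via the splitting principle, to the case of line bundles and then invoke the Theorem of the Cube for the abelian scheme $E \to \bB$.

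The main content is the case of three line bundles $L_1, L_2, L_3$ on $\bX$. Expanding
\[
([L_1] - 1)([L_2] - 1)([L_3] - 1) = [L_1 L_2 L_3] - [L_1 L_2] - [L_1 L_3] - [L_2 L_3] + [L_1] + [L_2] + [L_3] - [\cO] \in \fmo^3,
\]
it suffices to show that $\Theta$ sends the right-hand side to the trivial line bundle. The crucial observation is that the elliptic first Chern class \eqref{defc} intertwines tensor product of line bundles with the group law on $E$, by \eqref{groupl}: writing $c_i : \Ell_\bT(\bX) \to E$ for the Chern class of $L_i$ and $m_I : E^3 \to E$ for the sum along the coordinates indexed by $I \subset \{1,2,3\}$, the Chern class of $L_I = \bigotimes_{i \in I} L_i$ equals $m_I \circ (c_1,c_2,c_3)$. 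Applying $\Theta$ to the above expansion then identifies its image with the pullback, via $(c_1,c_2,c_3) : \Ell_\bT(\bX) \to E^3$, of the ``cube'' line bundle
\[
D_3(\cO(\{0\})) \;=\; \bigotimes_{I \subset \{1,2,3\}} \bigl(m_I^* \cO(\{0\})\bigr)^{(-1)^{3-|I|}}
\]
on $E^3$ (with the convention that $m_\emptyset$ is the constant map to $0$). The Theorem of the Cube for $E/\bB$ asserts precisely that $D_3(L)$ is canonically trivial for every line bundle $L$ on $E$; applied here it gives the desired vanishing.

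For general classes, I would invoke the splitting principle: pull back to the relative triple flag variety $\fF = \fF(V_1) \times_\bX \fF(V_2) \times_\bX \fF(V_3)$, which is an iterated projective bundle over $\bX$. By the elliptic projective bundle theorem the pullback $p^* : \Pic(\Ell_\bT(\bX)) \to \Pic(\Ell_\bT(\fF))$ is injective, so it is enough to check the claim on $\fF$. There each $V_i$ acquires a complete filtration with line-bundle subquotients $L_{i,1}, \dots, L_{i,r_i}$, so $[V_i] - r_i = \sum_j ([L_{i,j}] - 1)$ in $\fmo$, and the product $\prod_i ([V_i] - r_i)$ becomes a $\Z$-linear combination of terms of the form $([L_{1,j_1}] - 1)([L_{2,j_2}] - 1)([L_{3,j_3}] - 1)$, each killed by $\Theta$ by the first step.

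The main obstacle, and essential input, is the Theorem of the Cube for the abelian scheme $E/\bB$; once this and the tensor-product compatibility \eqref{groupl} of the elliptic Chern class are in hand, the rest is formal. A minor technical point is verifying the injectivity of $p^*$ on $\Pic(\Ell_\bT(-))$ for an iterated projective bundle, which follows from the elliptic projective bundle formula together with standard descent for line bundles along a morphism whose higher direct images of the structure sheaf vanish.
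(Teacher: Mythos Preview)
Your proof is correct, and it shares with the paper the essential input: the Theorem of the Cube. The route, however, is genuinely different.

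The paper does not split on $\bX$. Instead, it reduces to the universal case $\bG = \prod GL(r_i)$, $\bX=\pt$, where $\cE = \Ell_\bG(\pt) = \prod S^{r_i} E$ is already smooth and projective over $\bB$. Given $V_1,V_2,V_3 \in \fmo$, it forms the external tensor product $p_1^* V_1 \otimes p_2^* V_2 \otimes p_3^* V_3$ on $\bG^3$, notes that $\Theta$ of this class is trivial on each slice $\{1\}\times\cE\times\cE$ (because $V_i|_{1}=0$), and then invokes the Theorem of the Cube directly on $\cE^3$. Restricting to the diagonal gives $\Theta(V_1 V_2 V_3)=\cO_\cE$. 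No splitting principle, no flag bundles, no injectivity statement for $\Pic$ is needed; the price is that one must use the general form of the cube theorem for proper schemes over a base (not just abelian schemes), which is why the paper cites the Stacks Project entry.

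Your approach trades this for the classical abelian-variety version on $E^3$, which is cleaner on the cube side, but then you must justify that $p^*$ is injective on $\Pic(\Ell_\bT(-))$ for an iterated projective bundle. This is true but is not entirely formal in elliptic cohomology; your sketch (``projective bundle formula plus $R^{>0}p_*\cO=0$'') is the right shape, though in the paper's framework one would need to say what $\Ell_\bT(\bP(V))\to\Ell_\bT(\bX)$ looks like to make it precise. A slicker variant of your argument would be to split at the universal level instead: pull back along the finite surjection $\prod E^{r_i} \to \prod S^{r_i} E$, where injectivity on $\Pic$ is immediate from the norm map, and then run your $E^3$ cube computation there.
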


\noindent See Appendix \ref{s_A1} for the proof.

\subsection{Fixed loci}

\subsubsection{}
We have two natural maps of the form  \eqref{map_pairs} associated to the
fixed locus $\bX^\bA$ namely
\begin{alignat}{2} 
  \label{incl}
  (\bT, \bX^\bA) & \xrightarrow{\quad\,\, 1_\bT \times \iota\quad\,\,\, \,} &&\, (\bT, \bX) \,, \\
                 (\bT, \bX^\bA) & \xrightarrow{\quad\phi\times 1_{\bX^\bA}\quad} &&\,(\bT/\bA, \bX^\bA) \,.
\end{alignat}
where $\iota: \bX^\bA \to \bX$ is the inclusion and $\phi: \bT\to \bT/\bA$ is the
quotient.

\subsubsection{}

The diagram 
\begin{equation}
  \label{pullA}
  \xymatrix{
    \Ell_\bT(\bX^\bA) \ar[rr]^{\phi} \ar[d]_p && \Ell_{\bT/\bA}(\bX^\bA) \ar[d]^p \\
    \cE_\bT \ar[rr]^{\phi} && \cE_{\bT/\bA}
    }\,, 
  \end{equation}
  is a pullback diagram. Its  horizontal maps are quotients by a 
  free action of $\cE_A$.

  \subsubsection{}

Given a line bundle $\cL$ on $\Ell_\bT(\bX)$, we denote by $\deg_\bA$
the degree of its restriction to the fibers of $\phi$ in
\eqref{pullA}, see Appendix \ref{A_degree}. This degree is an element of $S^2
\cha(\bA)$ which depends on the component of the fixed locus. In other
words:
\begin{equation}
  \label{degbA}
  \deg_\bA : \Pic(\Ell_\bT(\bX)) \to H^0(X^\bA, S^2
\cha(\bA)) \,. 
\end{equation}
For instance, 
\begin{equation}
\deg_\bA \Theta(V) =( c_1^2 - 2 c_2) \left(V|_{\pt \in X^\bA}\right)
\,. \label{degThV}
\end{equation}
where we take equivariant Chern classes of the fiber of $V$ at a point of
$X^\bA$. Formula \eqref{degThV} follows from \eqref{degThV2}. 

\subsection{Push-forwards}

\subsubsection{}\label{s_push}

Recall the functor
\begin{equation}
  \label{f_*}
f_*: \Coh \Ell_{\bG_1}(\bX_1) \to  \Coh \Ell_{\bG_2}(\bX_2)
\end{equation}
induced by a map of the form \eqref{map_pairs}.

% If $\Ker(\bG_1 \to \bG_2)$ is finite then
% $$
% R^i f_* =0\,, \quad i>0\,,
% $$
% because the underlying map of
% schemes is finite. 

% \subsubsection{} 

Pushforwards in equivariant elliptic cohomology are defined for proper complex
oriented equivariant maps, and are homomorphisms
\begin{equation}
f\pf \in \Hom_{\Coh \Ell_{\bG_2}(\bX_2)}
(f_* \Theta_{\bX_1}(-N_f),\cO_{\Ell_{\bG_2}(\bX_2)}) \label{fpf}
\,, 
\end{equation}
where $N_f \in K_\bT(\bX_1)$ is the normal bundle to $f$.
\subsubsection{} 

Note the 
difference
$$
f\pf \ne f_*  \,. 
$$
For starters, $f_*$ is a functor, while $f\pf$ is a map in
the target category of $f_*$.

\subsubsection{Duality}
Given a line bundle $\cL$ on $\Ell_\bT(\bX)$, we set
\begin{equation}
\cL^\dd = \cL^{-1} \otimes \Theta(T\bX) \,. \label{dd}
\end{equation}
This is the appropriate twist of duality for line bundles in our
situation. 
If $p_\bX: X \to \pt$ is proper, this gives a pairing
\begin{equation}
  \label{eq:12}
  p_{\bX,*}( \cL \otimes \cL^\dd )  \xrightarrow{\quad p_{\bX,\ppf} \quad}
  \cO_{\cE_\bT} \,. 
\end{equation}

\subsubsection{Supports}

For any coherent sheaf $\cG$ on  $\Ell_\bT(\bX)$ and any
$\bT$-invariant open $U \subset \bX$, we have a subsheaf
\begin{equation}
  \label{Gsupp}
  \cG_{\supp \subset \bX \setminus U} = \Ker \left( \cG \to \cG|_{U}
  \right) 
\end{equation}
of sections supported in $\bX\setminus U$. The map in \eqref{Gsupp} is
the functorial pullback with respect to the inclusion $U \to \bX$.

We
will be using \eqref{Gsupp} for locally free sheaves $\cG$. 

\subsubsection{}
For general complex oriented maps, we have the following
generalization of \eqref{fpf}
\begin{equation}
f\pf \in \Hom_{\Coh \Ell_{\bG_2}(\bX_2)}
(f_* \Theta_{\bX_1}(-N_f)\cs,\cO_{\Ell_{\bG_2}(\bX_2)}) \label{fpf}
\,, 
\end{equation}
where the subsheaf
$$
\Theta_{\bX_1}(-N_f)\cs\subset \Theta_{\bX_1}(-N_f)
$$
is formed by sections $s$ such that $f\big|_{\supp s}$ is
proper.

\subsubsection{Correspondences}

Consider the diagram
\begin{equation}\label{diagcorr} 
  \xymatrix{&\bX_2 \times \bX_1 \ar[dl]_{\forp_1} \ar[dr]^{\forp_2} \\
    \bX_2 \ar[dr]_{p_2}&& \bX_1  \ar[dl]^{p_1}\\
    & \pt 
  }
\end{equation}
in which
$$
\forp_1 = 1_{\bX_2} \times p_1 \,, \quad \forp_2 = p_2 \times 1_{\bX_1} \,.
$$
In particular
$$
N_{\forp_1} = -  \forp_2^* \, T \bX_1\,.
$$
The pull-push formula
\begin{equation}
  \label{pullpush}
  \alpha \mapsto (\forp_2)\pf ( \alpha \forp_1^*(\, \cdot \,))  \,, \qquad
  \end{equation}
gives a map 
\begin{equation}
p_{12,*} \Theta_{\bX_1 \times \bX_2}(\forp_2^* \,  T\bX_1)\cs \to
\cHom\Big(
p_{1,*}  \cO_{\Ell_\bT(\bX_1)} , p_{2,*}  \cO_{\Ell_\bT(\bX_2)}
\Big)\label{pullpush2}
\end{equation}
of sheaves on
$\cE_\bT$. Here
$$
p_{12} = p_1 \circ \forp_2 = p_2 \circ \forp_1 \,.
$$

\subsubsection{}

More generally, if $\cL_i \in \Pic(\Ell_\bT(\bX_i))$ then a section
\begin{equation}
\alpha \in H^0(\cL_2 \boxtimes \cL_1^\dd)  \label{boxtimes}
\end{equation}
gives a map
\begin{equation}
p_{1,*}  \, \cL_1  \xrightarrow{\quad  (\forp_1)\pf ( \alpha \forp_2^*(\, \cdot
  \,))  \quad}  p_{2,*}  \, \cL_2 \label{pullpushL}
\end{equation}
in $\Coh \cE_\bT$. In \eqref{boxtimes} and elsewhere, the symbol
$\boxtimes$ denotes the tensor product of pullbacks via two projection
maps.

\subsubsection{}
Elliptic stable envelopes are objects of the form
\eqref{boxtimes} on $\bX \times \bX^{\bA}$ for certain special lines
bundles $\cL_i$.

\subsection{K\"ahler line bundles} \label{s_Kah}

\subsubsection{} 

K\"ahler line bundles give a rich supply of line bundles of
$\bA$-degree $0$. The terminology is chosen for consistency with \cite{ese}, because the variables
$z$ corresponds to the K\"ahler parameters in enumerative
context.

\subsubsection{} 

Let $V$ be vector bundle on $\bX$ and let $\Ct_z$ be a new copy of the
group $\Ct$ with coordinate $z$, acting trivially on $\bX$. As in
\eqref{pullA}, we have 
\begin{equation}
  \label{eq:3}
  \Ell_{\bT \times \Ct_z}(\bX) = \Ell_{\bT}(\bX) \times E_z\,, \quad
  E_z=\cE_{\Ct_z}\,. 
\end{equation}
Consider the K-theory class
$$
(z-1)(V-\C^{\rk V}) \in \fmo^2 \subset K_{\bT \times
  \Ct_z}(\bX)\,, 
$$
where $\C^{\rk V}$ is a trivial bundle of rank $r$.
It gives a line bundle 
\begin{equation}
\cU(V,z) = \Theta((z-1)(V-\C^{\rk V})) \in \Pic(\Ell_{\bT}(\bX) \times E_z)
\, , \label{defcU}
\end{equation}
which has degree $0$ along each factor.

\subsubsection{}

Clearly, 
$$
\cU(V_1 \oplus V_2,z) \cong \cU(V_1,z)  \otimes \cU(V_2,z)\,. 
$$

\begin{Lemma}
  We have
  \begin{align}
    \label{euVz1}
    \cU(V,z) &\cong \cU(\det V,z)\,, \\
    \cU(V,z_1z_2) &\cong \cU(V,z_1)
    \otimes \cU(V,z_2)\,. \label{euVz2}
  \end{align}
\end{Lemma}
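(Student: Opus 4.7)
The plan is to deduce both isomorphisms from Lemma \ref{l_cube}, which says that the assignment $V \mapsto \Theta(V)$ descends to $K_{\bT \times \Ct_z}(\bX)/\fmo^3$. So in each case it suffices to show that the two K-theory classes whose $\Theta$'s one wishes to compare differ by an element of $\fmo^3$. The key observations are that $z - 1 \in \fmo$ (pulled back from $K(\Ct_z)$) and $V - \C^{\rk V} \in \fmo$.

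For the multiplicativity in $z$, one expands
\begin{equation*}
z_1 z_2 - 1 = (z_1 - 1)(z_2 - 1) + (z_1 - 1) + (z_2 - 1),
\end{equation*}
so that
\begin{equation*}
(z_1 z_2 - 1)(V - \C^{\rk V}) \;-\; (z_1 - 1)(V - \C^{\rk V}) \;-\; (z_2 - 1)(V - \C^{\rk V}) \;=\; (z_1 - 1)(z_2 - 1)(V - \C^{\rk V}),
\end{equation*}
which is a product of three elements of $\fmo$, hence lies in $\fmo^3$. Applying $\Theta$ and invoking Lemma \ref{l_cube} gives \eqref{euVz2}.

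For \eqref{euVz1}, the essential point is the congruence
\begin{equation*}
V - \C^{\rk V} \;\equiv\; \det V - \C \pmod{\fmo^2}
\end{equation*}
in $K_\bT(\bX)$. By the splitting principle, it is enough to verify this when $V = L_1 \oplus \cdots \oplus L_r$ is a sum of line bundles. In that case both sides have rank $0$ and their difference is
\begin{equation*}
\sum_i (L_i - 1) \;-\; \Bigl( \prod_i L_i - 1 \Bigr),
\end{equation*}
which, by iterating the identity $L_1 L_2 - 1 = (L_1 - 1)(L_2 - 1) + (L_1 - 1) + (L_2 - 1)$, is manifestly a sum of products of elements of $\fmo$, hence lies in $\fmo^2$. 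Multiplying by the further factor $z - 1 \in \fmo$ yields
\begin{equation*}
(z - 1)(V - \C^{\rk V}) \;\equiv\; (z - 1)(\det V - \C) \pmod{\fmo^3},
\end{equation*}
and Lemma \ref{l_cube} gives \eqref{euVz1}.

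I do not anticipate a real obstacle: all of the weight is carried by Lemma \ref{l_cube}, and the congruences above are elementary manipulations in the augmentation filtration. The only mild subtlety is making sure that the splitting-principle reduction is legitimate in the $\bT \times \Ct_z$-equivariant setting, which it is since one is checking an identity of classes in the augmentation ideal and the Thom-class map in question is already known to extend to virtual bundles.
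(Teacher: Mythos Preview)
Your proposal is correct and takes essentially the same approach as the paper: the paper's proof is simply ``Follows immediately from Lemma \ref{l_cube},'' and you have supplied exactly the congruences modulo $\fmo^3$ that make this immediate. The explicit expansions you give are the natural ones, and the splitting-principle step for \eqref{euVz1} is a standard way to make the congruence $V - \C^{\rk V} \equiv \det V - \C \pmod{\fmo^2}$ transparent.
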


\noindent
Here $\cU(V,z_1z_2)$ is the pullback by the addition map $E \times E \to
E$, as in \eqref{groupl}.

\begin{proof}
  Follows immediately from Lemma \ref{l_cube} \,. 
  % Take $(a,b,c) \in \left(\Ct\right)^3$. 
  % By the theorem of the cube, we have
  % $$
  % \Theta((a-1)(b-1)(c-1)) \cong \cO_{E^3} \,, 
  % $$
  % or, equivalently, 
  % $$
  % \Theta((a-1)(c-1)+(b-1)(c-1)) \cong\Theta((ab-1)(c-1)) \,. 
  % $$
  % This proves the identity \eqref{euVz2} for
  % $$
  % V = \textup{defining representation} \in K_{GL(V)}(\pt) 
  % $$
  % as an identity of line
  % bundles on $S^r E
  % \times E \times E$. The general statement \eqref{euVz2} is, by construction, pulled back from
  % $S^r E \times E \times E$.  The argument for \eqref{euVz1} is
  % similar. 
\end{proof}

\subsubsection{}

By \eqref{euVz1} and homotopy invariance, the map $V\mapsto \cU(V,z)$
factors through the discrete group
$$
K_\bT(\bX) \xrightarrow{\quad \det \quad} \Pic_{\bT}(\bX)
\xrightarrow{\quad \pi_0 \quad}
NS_\bT(\bX) \overset{\tiny\textup{def}} = \Pic_{\bT}(\bX) \big/
\Pic_{\bT}(\bX)_0 \,. 
$$

\subsubsection{}

If $V$ pulled back via $f: \bX \to Y$, then similarly $\cU(V,z)$ is
pulled back from $\Ell_\bT(Y)$. In particular, line bundles $\cU(w, z)$, where $w$ is a character of
$\bT$, are pulled back from $\cE_\bT$.

\section{Line bundles on $\cE_\bA$}

\subsection{Proof of Lemma \ref{l_cube}} \label{s_A1}

\begin{proof}
  It suffices to consider the universal case
  $$
  \bG = \prod GL(r_i)\,, \quad  \bX=\pt \,. 
  $$
  To save on notation, we set
  $$
  \cE = \Ell_\bG(\pt) = \prod S^{r_i} E \,. 
  $$
  This is smooth and projective over $\bB$. 
  
  Consider the diagonal embedding and three projections 
  $$
  \bG \xrightarrow{\quad\diag\quad}  \bG^3
  \xrightarrow{\quad p_1,p_2,p_3\quad}   \bG \,. 
$$
The ideal $\fmo^3$ is spanned by $V$ of the form
$$
V = \diag^* \left( p_1^*(V_1) \otimes  p_2^*(V_2) \otimes p_3(V_3)
\right) \,, \quad V_i \in \fmo \,. 
$$
Since $V_i \in \fmo$, we have $V_i\big|_{1\in G} =0$.  Therefore
$$
\left. \Theta\left(\bigotimes  p_i^*(V_i)  \right)\right|_{\{1\} \times
  \cE
  \times \cE} = \cO_{\cE^2}  \,, 
$$
and similarly for the other factors.

By the theorem of the cube, see \cite{stacks} for a version over a general
base scheme $\bB$, we have
$$
\Theta\left(\bigotimes  p_i^*(V_i)  \right) = \cO_{\cE^3} \,.
$$
Therefore $\Theta(V) = \diag^* \cO_{\cE^3} = \cO_\cE$, as was to be
shown.
\end{proof}

\subsection{The dual Abelian variety} 

Since the character lattice $\cha(\bA)$ is dual to the
cocharacter lattice in \eqref{cEbT} and $E$ is principally polarized,
we have 
$$
\cE^\vee_\bA = \cha(\bA) \otimes_\Z E  = \cE_{\bA^\vee}\,,
$$
where $\bA^\vee$ is the dual torus.

If $\{a_i\}$ is a basis of $\cha(\bA)$ and $\{a^\vee_i\}$ is the dual
  basis of characters of $\bA^\vee$ then
  $$
  \eta = \sum (a_i-1)(a^\vee_i-1)  \in \fmo^2/\fmo^3 \subset K_{\bA \times
    \bA^\vee}(\pt)/\fmo^3 
  $$
  is a canonical element and
  $$
  \Theta(\eta) = \textup{Poincar\'e line bundle on $\cE_{\bA} \times
      \cE_{\bA^\vee}$}
  \,.
  $$

  \subsection{The degree of a line bundle} \label{A_degree}

  By definition, we have an exact sequence
  $$
  0 \to \Pic_0(\cE)  \to \Pic(\cE) \to \NS(\cE) \to 0\,, 
  $$
  where the discrete group $\NS(\cE)$ is  the Neron-Severi group of
  $\cE$.  For an Abelian variety, we have
  $$
  \Pic_0(\cE) = \cE^\vee\,, \quad \NS(\cE) = \Hom_{\textup{symmetric}}(\cE,\cE^\vee)  \,.
  $$
By definition, the degree $\cL$ of a line bundle is its class in
  $\NS(\cE)$.  To compute it, note that 
  $\cL$ defines a morphism $\cE_\bA \to
  \cE_{\bA^\vee}$ by 
  by the formula
  $$
  a \mapsto (\textup{multiplication by $a$})^* \cL \otimes \cL^{-1}
\,. 
$$ 
We denote this morphism by
$$
\deg \cL \in \Hom_{\textup{symmetric}}(\cE_\bA, \cE_\bA^\vee) \cong
\NS(\cE) \,. 
$$

{} From definitions, if
$$
V = \sum_{\mu \in \cha(\bA)} V_\mu \, a^\mu \in K_{\bA}(\bX^\bA)
$$
then 
\begin{equation}
\deg \Theta(V)
= \sum \rk(V_{\mu}) \, \mu^2 \in S^2 \cha(\bA) \subset
\Hom_{\textup{symmetric}}(\cE_\bA, \cE_\bA^\vee) \,.\label{degThV2}
\end{equation}
Note that for a very general elliptic curve $E$, we have
$$
\Hom(E,E) = \Z \,,
$$
and therefore $\NS(\cE) =  S^2\cha(\bA)$. We will not encounter line
bundles of other degrees in this paper.

\begin{figure}[!h]
  \centering
   \includegraphics[scale=0.4]{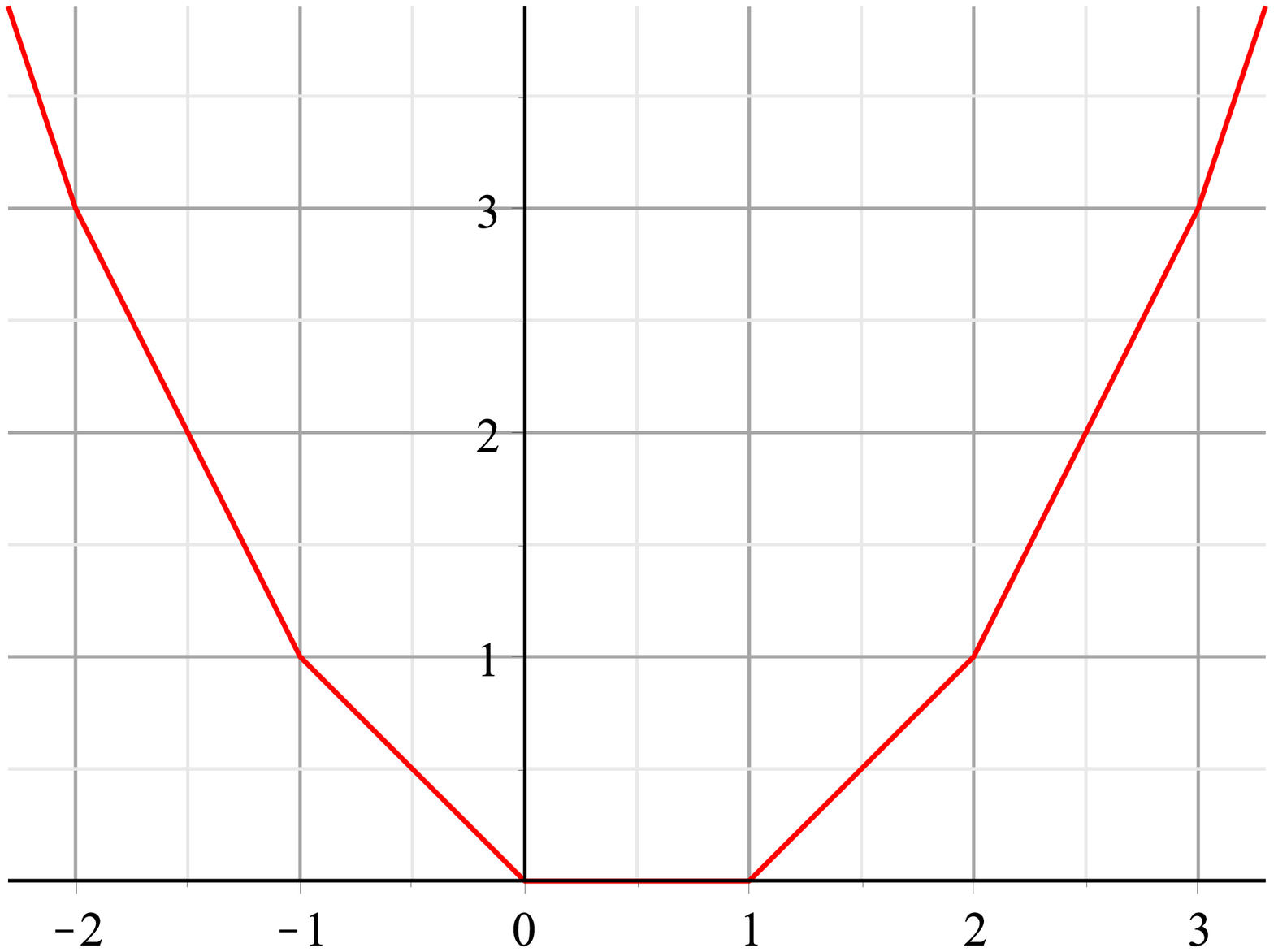}
 \caption{The function $\Qq(x)$}
\label{f1}
\end{figure}

\section{Degeneration of Abelian varieties}

Degeneration of Abelian varieties is a very rich, old, and well-developed
subject, see \cite{Brion} for an introduction to its basics including
Alexeev's theory 
\cite{Alex}. Here we summarize a few elementary aspects of this theory
that will play a role in this paper.

\subsection{Tate elliptic curve}\label{s_Tate}

Let $\Qq(x)$ be a solution of
$$
\Qq''(x) = \sum_{n\in \Z} \delta(x-n) \,,
$$
which vanishes for $x\in [0,1]$. Explicitly,
$$
\Qq(x) = x \flx - \tfrac12 \flx (\flx+1)
$$
see the plot in Figure \ref{f1}. We note that changing $\Qq(x)$  by an integral
form of degree $\le 1$ in $x$  does not affect what
follows, while the effect of changing it by a rational form of degree
$\le 1$ will be discussed in Section \ref{s_fraction}.

Let $\cR$ be a ring with unit. The Tate elliptic curve
over $\cR[[q]]$ is the quotient of the toric surface $\Sigma_\Qq$ over
$\cR$ with polyhedron 
\begin{align}
\Delta_\Qq &= \{(x,y), y \ge \Qq(x)\}\notag \\
  &= \conv\left\{ (k,m) \in \Z^2, m \ge \frac12 \,
    k(k-1)\right\} \label{DelQq} 
\end{align}
by a certain action of $\Lambda \cong \Z$. The completion of $\cR[q]$
to $\cR[[q]]$ will be required to take the quotient.

By definition, 
\begin{equation}
\Sigma_\Qq = \Proj \bigoplus_{\frac{m}{n} \ge
  \Qq\left(\frac{k}{n}\right)}
\cR \, t^n a^k q^m \,, \quad
n\ge 0 \,, \label{Sigmaproj}
\end{equation}
where the $n=0$ term is interpreted as $\cR[q]
$ and the
grading is by $n$. The fact that the algebra in \eqref{Sigmaproj} is
not finitely generated should not a be a concern, as the invariants of
$\Lambda$ will be finitely generated.

The group $\Lambda$ is generated by the
transformation\footnote{Up to a rescaling of the
coordinates, the group $\Lambda$ is the translation part in the affine
Weyl group of type $\widehat{A}_1$. We note that while hyperplane
arrangement appearing in the theory of stable envelopes may be
considered
as a generalization of the notion of roots and coroots in classical
Lie theory, the reflection group aspect is lost in this generalization.}
\begin{equation}
\sigma: (t,a,q)\mapsto (at,qa,q) \,. \label{actsig}
\end{equation}
In other words,
$$
\Lambda = \left\langle {\textstyle
    \begin{pmatrix}
      1 & 0 & 0\\
      1 & 1 & 0\\
      0 & 1 & 1
    \end{pmatrix}}
  \right\rangle
\subset SL(3,\Z)\,, 
$$
acting on triples $(n,k,m)\in \Z^3$. The group $\Lambda$ preserves the
cone over $\Delta_\Qq$ by virtue of \eqref{DelQq} because it preserves
the lattice and the quadratic form $2mn+kn-k^2$. 

In conventional terms, the action \eqref{actsig} is the action by
$q$-difference operators on functions of $a$. In particular, familiar
$\vth$-functions like
\begin{equation}
\vth_0(a,q) = \sum_{k\in \Z} a^k q^{\Qq(k)} \in \cR[a^{\pm
  1}][[q]]\label{theta_ser}
\end{equation}
are well defined and satisfy
\begin{equation}
\sigma( t \vth_0(c a,q)) =  c^{-1} \, t \vth_0(c a,q) \,,\label{thca}
\end{equation}
assuming $c$ is invertible and invariant under $\sigma$.

To continue the parallel with the classical theory of theta-functions,
one may characterize the functions
\begin{equation}
  \label{Fn}
  \cF_n= \widehat{\bigoplus_{\frac{m}{n} \ge
  \Qq\left(\frac{k}{n}\right)}}
\cR \, a^k q^m  \subset \cR[a^{\pm 1}]\Fq \,, 
\end{equation}
appearing in \eqref{Sigmaproj}, in terms of their norms at various
points
$$
a_0: \cR[a^{\pm 1}]\Fq \xrightarrow{\qquad}  \cR'\FqN \,, \quad N=1,2,\dots
\,,
$$
extending a ring homomorphism $\cR \to \cR'$. 
As usual, we will denote by $f(a_0,q)$ the image of $f(a,q)$ under the
corresponding map. The norm will be measured by the valuation 
\begin{equation}
\cR'\FqN \xrightarrow{\quad\bnu\quad} \tfrac 1N \Z \subset
\R\,,\label{val}
\end{equation}
given by the minimal nonzero power of $q$. In principle, one can also
consider evaluations of $a$ to irrational powers of $q$, but this does
not add
anything new.

\begin{Lemma}
  Functions in $\cF_n$ are those satisfying the
  following norm bound: 
\begin{equation}
\cF_n = \left\{f \, \Big| \, \forall a_0,
  \bnu(f(a_0,q))+n \, \Qq \left(\bnu (a_0)\right) \ge 0\right\}
\,.\label{Fn2}
\end{equation}
\end{Lemma}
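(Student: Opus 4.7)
The plan is to prove the two containments separately, with the main work being a convexity identity for $\Qq$. For the inclusion $\cF_n \subseteq \{f : \text{norm bound}\}$, I would expand $f = \sum c_{k,m} a^k q^m$ with support in $\{(k,m) : m \geq n\Qq(k/n)\}$. For any $a_0$ with $\bnu(a_0) = \alpha$, each monomial contributes $q$-valuation at least $k\alpha + m \geq k\alpha + n\Qq(k/n)$, so the required bound $\bnu(f(a_0,q)) \geq -n\Qq(\alpha)$ reduces to $\min_{\beta \in \tfrac1n\Z}[\beta\alpha + \Qq(\beta)] \geq -\Qq(\alpha)$. I would derive this from the pointwise inequality
\[
\Qq(\alpha) + \Qq(\beta) + \alpha\beta \geq 0 \qquad \forall\ \alpha, \beta \in \R.
\]
Writing $\alpha = a + s$ and $\beta = b + t$ with $a, b \in \Z$ and $s, t \in [0, 1)$, the left side collapses to $\tfrac{c(c-1+2u)}{2} + st$ with $c = a + b$ and $u = s + t \in [0,2)$, and the inequality can be checked by case analysis on $c$: trivial when $c \in \{0,1\}$ or when $c \geq 2$ or $c \leq -3$ (both terms visibly nonnegative), and requiring the strict bound $s, t < 1$ when $c \in \{-1, -2\}$ to control $st$ against $u-1$ and $2u-3$ respectively.

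For the reverse inclusion, I would compare the Newton polygon $N$ of $f$, i.e.\ the convex hull in $\R^2$ of $\{(k,m) : c_{k,m} \neq 0\}$, with the scaled polytope $n\Delta_\Qq$. Since both are epigraphs of convex functions, the containment $N \subseteq n\Delta_\Qq$ is equivalent to the pointwise comparison of support functions
\[
h_N(\alpha, 1) = \min_{(k,m) \in N}(k\alpha + m) \geq -n\Qq(\alpha) = h_{n\Delta_\Qq}(\alpha, 1)
\]
for all $\alpha \in \R$; the right-hand equality is precisely the minimization already carried out in the first part. To obtain the left-hand inequality, I would evaluate at $a_0 = c_0 q^\alpha$ for rational $\alpha$ generic (avoiding the finitely many slopes appearing on the lower hull of $N$ in a fixed compact range) and a generic unit $c_0 \in \cR^\times$. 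For such $\alpha$ the minimum in $h_N$ is attained at a single vertex $(k_0, m_0)$ of $N$, no cancellation occurs, and $\bnu(f(c_0 q^\alpha, q)) = k_0\alpha + m_0 = h_N(\alpha, 1)$; the norm bound then yields the inequality on a dense set of $\alpha$, and continuity in $\alpha$ extends it to all of $\R$.

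The main obstacle will be cancellation: distinct monomials of $f(a_0, q)$ lying on the same face of $N$ in direction $(\alpha,1)$ can in principle sum to zero and inflate the valuation beyond $h_N(\alpha, 1)$, which would defeat the support-function comparison. I would handle this by reducing to the case that $\cR$ is a field (by passing to the residue field at a generic point of $\Spec \cR$), so that the coefficient of each power of $q$ in $f(c_0 q^\alpha, q)$ becomes a nonzero Laurent polynomial in $c_0$; choosing $c_0$ outside the finite union of zero loci attached to the faces of $N$ relevant to a given compact range of $\alpha$ then kills all cancellations and completes the argument. The general case follows because the support condition defining $\cF_n$ can be checked coefficient-by-coefficient and is thus detected on residue fields.
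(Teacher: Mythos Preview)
Your argument is correct in substance and is, at its core, the same as the paper's, though you have not recognised it as such. The paper's entire proof is the single observation that $\Qq$ is Legendre self-dual up to a sign flip:
\[
\Qq^\vee(\alpha)\;=\;\max_{x}\bigl(\alpha x-\Qq(x)\bigr)\;=\;\Qq(-\alpha).
\]
Your pointwise inequality $\Qq(\alpha)+\Qq(\beta)+\alpha\beta\ge 0$ is exactly the statement $\Qq^\vee(-\alpha)\le\Qq(\alpha)$, and your case analysis is a by-hand verification of it. Once the full identity is in place, both inclusions in the lemma fall out with almost no further work, so the Newton-polygon and support-function apparatus you set up, while valid, is heavier than necessary.

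There is one genuine gap. For the reverse inclusion you assert $h_{n\Delta_\Qq}(\alpha,1)=-n\Qq(\alpha)$ and say this ``is precisely the minimization already carried out in the first part''; but in the first part you proved only the inequality $\min_\beta\bigl(\alpha\beta+\Qq(\beta)\bigr)\ge -\Qq(\alpha)$, not the equality. The reverse inclusion needs the opposite inequality (equivalently, that your pointwise bound is sharp), and without it $h_N\ge -n\Qq$ does not imply $h_N\ge h_{n\Delta_\Qq}$. The fix is immediate from your own computation: taking $\beta=-\lfloor\alpha\rfloor$ gives $t=0$ and $c=0$, hence $\tfrac{c(c-1+2u)}{2}+st=0$, so equality is attained and the Legendre identity is complete.

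On cancellation: your reduction to residue fields works, but recall that the evaluation points $a_0$ are allowed to land in $\cR'(\!(q^{1/N})\!)$ for an \emph{arbitrary} $\cR$-algebra $\cR'$. Taking $\cR'=\cR[c_0^{\pm1}]$ and $a_0=c_0 q^\alpha$ makes the monomials $c_0^k$ linearly independent over $\cR$, so no cancellation is possible and $\bnu(f(a_0,q))=\min_{(k,m)\in\operatorname{supp} f}(k\alpha+m)$ exactly, with no genericity needed.
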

\begin{proof}
  Follows from the observation that
  the function $\Qq(x)$ is its own Legendre transform up to a
flip of sign
\begin{equation}
\Qq^\vee(\alpha) = \max_{x}  \left( \alpha x - \Qq(x) \right) =
\Qq(-\alpha) \,.\label{QqLeg}
\end{equation}
\end{proof}

\noindent 
{}It is obvious from a description of the form \eqref{Fn2} that 
$$
\cF_0 = \cR[[q]] \,.
$$
and that 
$$
\cF_n \cF_m \subset \cF_{n+m} \,. 
$$
The group $\Lambda$ acts on each $\cF_n$ by $q$-difference
operators. These operators cover the action $a\mapsto q a$ and exist because
\begin{equation}
\Qq(x+1)=\Qq(x) + x \,.\label{Qqdiff}
\end{equation}
By definition, 
\begin{equation}
E_\textup{Tate} = \Proj \left(\bigoplus_{n \ge 0} \cF_n^\Lambda
  \right)
  \,.\label{Etate} 
\end{equation}
\begin{Lemma}
The algebra in \eqref{Etate} finitely generated over $\cR[[q]]$. 
\end{Lemma}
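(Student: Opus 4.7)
The plan is to identify an explicit finite set of generators built out of theta series, then reduce finite generation of the whole algebra to the classical fact that the section ring of a degree-one line bundle on an elliptic curve is generated in bounded degree.

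First, I would pin down each graded piece $\cF_n^\Lambda$. A $\Lambda$-invariant element of $\cF_n$ is characterized by the $q$-difference equation
$$
f(a,q) = a^n \, f(qa,q)
$$
(coming from the action \eqref{actsig}) together with the growth bound \eqref{Fn2}. The action of $\Lambda$ on the lattice points of the level-$n$ slice of $\conv(\Delta_\Qq)$ has exactly $n$ orbits, indexed by residue classes $k \bmod n$, and each orbit yields a $q$-adically convergent theta series
$$
\vartheta_{k,n}(a,q) = \sum_{j \in \Z} a^{k+nj}\, q^{n\Qq(k/n + j)}\,, \qquad k=0,1,\dots,n-1\,.
$$
Convergence follows from the Legendre self-duality \eqref{QqLeg} of $\Qq$, and $\Lambda$-invariance from the functional equation \eqref{Qqdiff}. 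One checks that these $n$ series form a free $\cR[[q]]$-basis of $\cF_n^\Lambda$; in particular each graded piece is finitely generated over $\cR[[q]]$, of rank $n$ for $n\ge 1$.

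Second, I would invoke the classical statement that the section ring $\bigoplus_{n\ge 0} H^0(E,L^{\otimes n})$ of a degree-one line bundle on an elliptic curve is generated in degrees $\le 3$: one finds a distinguished element $x \in H^0(L^{\otimes 2})$ and $y \in H^0(L^{\otimes 3})$ such that $1, x, y$ generate, with a single cubic Weierstrass relation in degree $6$. Translating via the identification in Step 1, it suffices to exhibit explicit $x \in \cF_2^\Lambda$ and $y \in \cF_3^\Lambda$ (constructed as specific combinations of the $\vartheta_{k,n}$) and verify that $1, x, y$ generate $\bigoplus_n \cF_n^\Lambda$ as an $\cR[[q]]$-algebra.

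Third, to upgrade from a field to the base $\cR[[q]]$, I would reduce modulo $q$. The central fiber of the quotient $\Sigma_\Qq/\Lambda$ is a nodal rational curve, for which the graded ring is visibly finitely generated via its toric description. The $q$-adic completeness of $\cR[[q]]$ then lifts finite generation by a Nakayama-type argument: any $\cR[[q]]$-submodule of $\bigoplus_n \cF_n^\Lambda$ that surjects onto the $q=0$ reduction of each graded piece already surjects onto the whole (since the graded pieces are finite free).

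The main obstacle is precisely this last passage from field to mixed-characteristic base $\cR[[q]]$. While finite generation of the fraction-field version is essentially classical, one needs to check that products of chosen generators express arbitrary elements of $\cF_n^\Lambda$ with coefficients in $\cR[[q]]$ itself rather than in its localization --- equivalently, that the relevant change-of-basis matrices have invertible leading terms modulo $q$. This reduces to an explicit check of leading coefficients of the $\vartheta_{k,n}$ at $q=0$, where the sums collapse to Laurent monomials indexed by vertices of $\Delta_\Qq$ and the nondegeneracy is automatic.
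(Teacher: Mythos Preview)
Your approach is correct and lands in the same place as the paper's, but you take a detour through the generic fiber that the paper skips entirely. The paper's argument is your Step~3 alone, made concrete: it says ``it suffices to prove finite generation modulo $q$'' (your Nakayama reduction, implicitly using that each $\cF_n^\Lambda$ is finite over $\cR[[q]]$), and then identifies the $q=0$ algebra explicitly. Setting $s = ta$, the invariant subalgebra modulo $q$ becomes
\[
\{\,f(t,s)\in\cR[t,s]\ \mid\ f(t,0)=f(0,t)\,\}\,,
\]
the homogeneous coordinate ring of $\bP^1_\cR$ with $0$ and $\infty$ identified. The paper then writes down generators $f_1=t+s$, $f_2=ts$, $f_3=t^2 s$ together with the Weierstrass relation $f_3^2+f_2^3-f_1f_2f_3=0$, recovering Tate's formula.

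Your Step~1 (theta basis) is more than is needed: all that is required for the Nakayama step is that each graded piece is finite over $\cR[[q]]$, which is visible from the polytope description without writing down the $\vartheta_{k,n}$. Your Step~2 (classical structure of the section ring of a degree-one line bundle on an elliptic curve) is purely motivational and plays no logical role once you pass to the special fiber. Where you say the nodal-curve ring is ``visibly finitely generated via its toric description,'' the paper supplies exactly that visible computation. So your proof and the paper's coincide at the essential step; the paper is just more direct and more explicit about the central fiber, while you spend effort on generic-fiber structure that ends up unused.
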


\begin{proof}
  It suffices to prove finite generation modulo $q$. Setting $s=ta$ in
  \eqref{Sigmaproj} this amounts to proving finite generation of
  the following subalgebra
  $$
  \cR[t,s] \supset \left\{ f(t,s) \, \big|\, f(t,0)=f(0,t) \right\}\,, 
  $$
  which is the homogeneous coordinate ring of $\bP^1_\cR$ with $0$ and
  $\infty$ identified. One sees directly that it is generated over $\cR$ by
  $$
    f_1 = t+s\,, \quad 
    f_2 = ts \,, \quad
    f_3 = t^2s \,, 
    $$
    which satisfy the relation
    $$
    f_3^2 + f_2^3 - f_1 f_2 f_3 = 0\,, 
    $$
  in which one can recognize formula (46) in \cite{Tate}. 
\end{proof}

\subsection{A higher rank generalization}\label{s_cQ}

\subsubsection{}

Let let $\bA\cong \Gm^r$ be a split torus of some rank $r$ and let
$\{\mu_i\}$ be a collection of weight of $\bA$ spanning the vector
space $\fa^*$ in
\eqref{deffa}. 
Interpreting each $\mu_i$
as a linear function on $\fa$, we define
\begin{equation}
\cQ = \sum \Qq(\mu_i) \,.\label{cQ}
\end{equation}
This is a nondegenerate convex function on $\fa$.

As our running
example, we will take $r=2$ and
\begin{equation}
\cQ(x,y) = \Qq(2x) + \Qq(y) + \Qq(x-y) \,.\label{cQex}
\end{equation}
This function is plotted in Figure \ref{f2} together with its Legendre
transform. 
\begin{figure}[!h]
  \centering
  \includegraphics[scale=0.4]{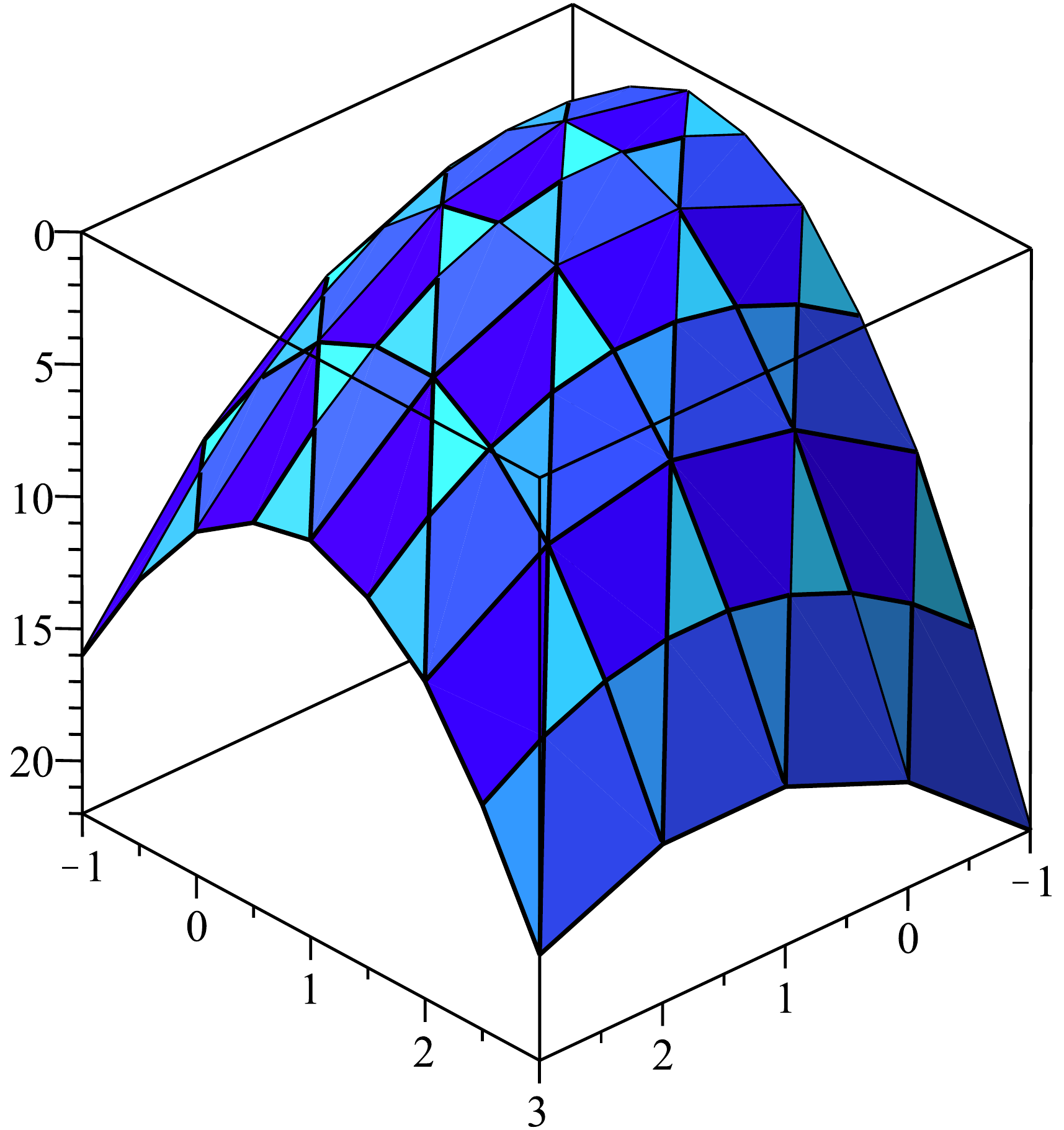}
  \hspace{0.5cm}\raisebox{-0.35cm}{\includegraphics[scale=0.54]{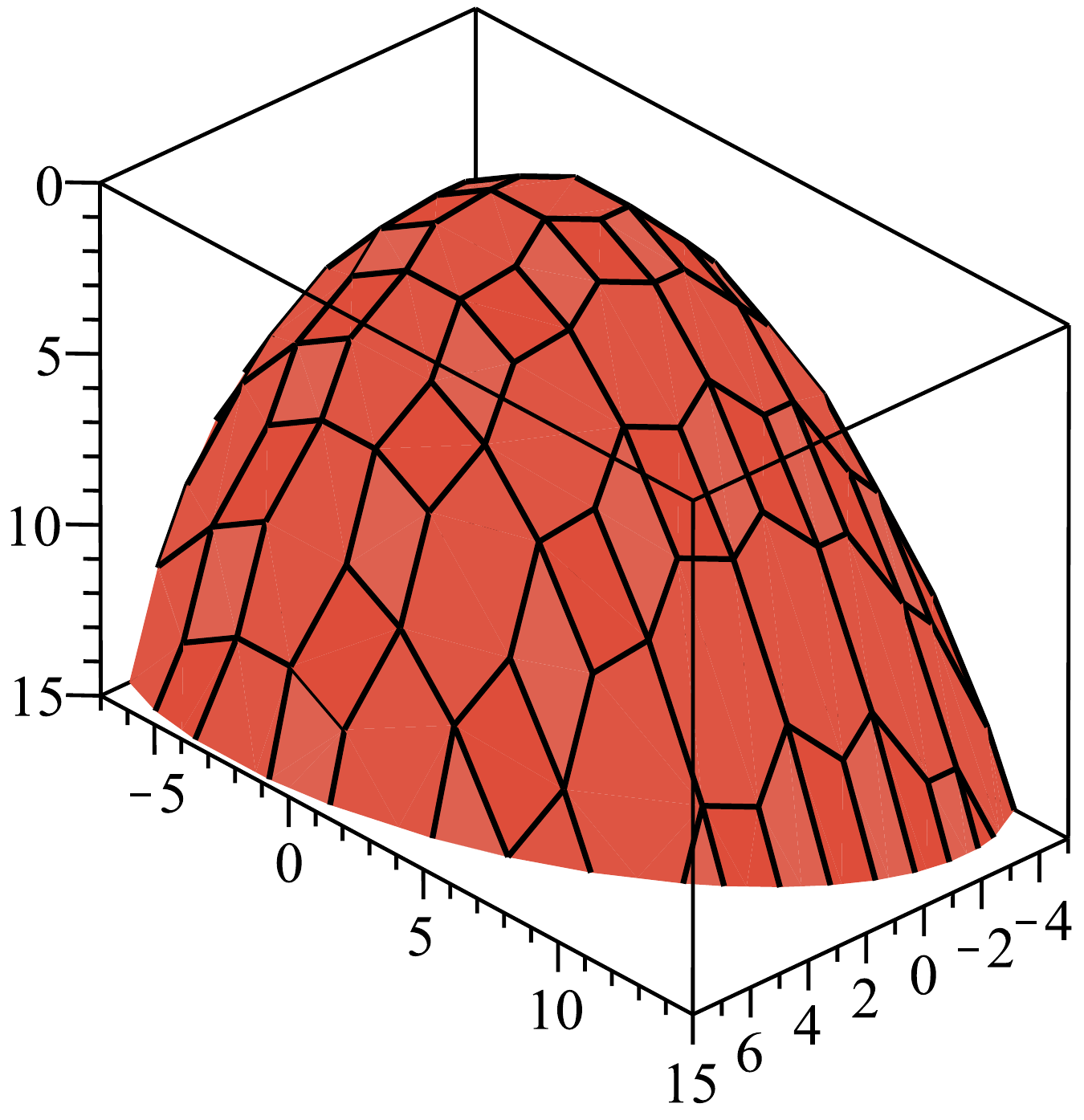}}
 \caption{The function $\cQ$ in \eqref{cQex} and its Legendre
   transform $\cQ^\vee$, with the $z$-axis is upside down. Note the
   duality between faces of the periodic tilings of $\fa$ and $\fa^*$ obtained by
   the projection to the argument plane. The tiling of $\fa$ is by
   hyperplanes $\mu_i(x) \in \Z$.}
\label{f2}
\end{figure}

We define $\cF_{\cQ,n} \subset \cR[\bA]\Fq$ by 
\begin{equation}
\cF_{\cQ,n}= \left\{f \, \Big| \, \forall a_0,
  \bnu(f(a_0,q))+n \, \cQ \left(\bnu (a_0)\right) \ge 0\right\}
\,.\label{FQn}
\end{equation}
Here the point $a_0$ has $r$ coordinates and so $\bnu(a_0)\in \fa$.

\subsubsection{}

Generalizing \eqref{Qqdiff}, for any cocharacter 
$$
\sigma \in \cochar\bA \subset \fa
$$
we have
\begin{equation}
\cQ(x+\sigma) = \cQ(x) + \sigma^\vee(x) + \cQ(\sigma)\label{cQdiff}
\end{equation}
where the self-adjoint map
\begin{equation}
\cochar(\bA) \owns \sigma \, \mapsto \sigma^\vee \in
\cha(\bA)\label{sisidu}
\end{equation}
is induced by the integral quadratic form $\sum \mu_i^2$. In our
running example, it has the the matrix $
\begin{pmatrix}
5 & -1 \\ -1 & 2
\end{pmatrix}
$.

Because of \eqref{cQdiff}, there is an action of
$$
\Lambda= \cochar(\bA)
$$ on
$\cF_{\cQ,n}$ by $q$-difference operators 
lifting its action by
$$
(a,q) \xrightarrow{\quad \sigma \quad} (\sigma(q) a, q) \,. 
$$ 
It induces an action on the toric $(r+1)$-fold defined by the Legendre
transform of $\cQ$. In particular, the tiling on the left in Figure
\ref{f2} is $\Z^2$-periodic, while the tiling on the right is
invariant under $\begin{pmatrix}
5 & -1 \\ -1 & 2
\end{pmatrix} \, \Z^2$.

\subsubsection{}\label{logexp}

Over $\C$, there is an obvious correspondence between algebraic
subgroups $\Gamma\subset \bA$ and Lie subgroups of its compact form
$\fa/\Lambda$. In general, it can be rephrased as follows. 

Recall that the equations of a subgroup
$\Gamma\subset \bA$  are encoded by the
kernel in the following exact
sequence
$$
0 \to \Gamma^\perp \to \chr(\bA) \to \chr(\Gamma) \to 0 \,. 
$$
We define
\begin{equation}
\log \Gamma = \{ x \in \fa \, | \, \lambda(x) \in \Z, \forall \lambda
\in \Gamma^\perp \}\,, \label{logGamma} 
\end{equation}
This is a Lie subgroup of $\fa$ and a 
periodic locally finite arrangement of affine subspaces of
$\fa$.
We also define the subgroup $\exp(\eta) \subset \bA$ by 
\begin{equation}
(\exp \eta)^\perp  = \{ \lambda \in \chr(\bA) \, | \, \lambda(\eta)
\in \Z
\}\,. \label{expeta} 
\end{equation}
We will be using \eqref{expeta} in the situation when $\eta$ is not
necessarily a subgroup, in particular, when it a stratum in a
stratification of $\fa$. In this case, $\log \exp \eta$ is the closed subgroup
of $\fa/\Lambda$ generated by $\eta$. 

\subsubsection{}\label{s_Gammaeta}

Consider 
\begin{equation}
\Proj \bigoplus_{n \ge 0} \cF_{\cQ,n}^{\Lambda}
\,.\label{cEcQdef0}
\end{equation}
This is scheme over the spectrum of  $\cF_{\cQ,0}=\cR[[q]]$.

With $q$
inverted, \eqref{cEcQdef0} is
an Abelian variety, namely $\Ell_\bA(\pt)$ for the 
elliptic curve $E_\textup{Tate}$, also with $q$ inverted. The $q=0$
fiber is a union of toric varieties corresponding to orbits of
$\Lambda$ on the tiling of $\fa^*$.

Note that, in general, the torus $\bA$ action on the 
toric varieties in the $q=0$ fiber of \eqref{cEcQdef0} is not faithful. For instance, the quadrilaterals in
the tiling on the right in Figure \ref{f2} are dual to the points
\begin{equation}
  \{(\tfrac12,0), (\tfrac12, \tfrac12) \} +\Z^2 \label{slopes12}
  \subset \fa_\Q 
\end{equation}
in the hyperplane arrangements on the left in the same figure. This 
means that the corresponding facets have the slopes
\eqref{slopes12},
and hence
intersect the lattice $\Z^3$ in lattices that projects to sublattices
of index 2 in $\chr \bA \cong \Z^2$.

More generally, let $\eta  \subset \fa$ be a stratum of our hyperplane
arrangement, let $\eta^\vee$ be the dual stratum of $\fa^*$
and let $O_{\eta^\vee}$ be the corresponding torus orbit in the
$q=0$ fiber of \eqref{cEcQdef0}. We have $O_{\eta^\vee} \cong \bA
/\exp(\eta)$, while we would like to have 
\begin{equation}
  \label{eq:6}
  O_{\eta^\vee} \cong \bA/ \exp(d \eta)\,,  
\end{equation}
where $d\eta \subset \fa$ is the tangent space to $\eta$ and
$\exp(d \eta) \subset \bA$ is the corresponding subtorus. Note that
$\exp(d \eta)$ 
is the reduced component of
the identity of $\exp(\eta)$. 

To achieve this, we do a base change of the form $q=(q^{1/M})^M$
where $M$ is so divisible that all facets of $\cQ^\vee$ intersect the
lattice $\chr \bA \oplus \frac1M \Z$ in the same lattice as their
projection to $\chr \bA$. We define
\begin{equation}
\cE_{\cQ} = \Proj \bigoplus_{n \ge 0} \cF_{\cQ,n}^{\Lambda}[q^{1/M}] 
\,.\label{cEcQdef}
\end{equation}
For instance, in our running example,
it suffices to introduce $q^{1/2}$.

\subsection{Fractional shifts}\label{s_fraction}

As a projective spectrum, $\cE_\cQ$ is constructed together with a
distinguished ample line bundle $\cO(\cQ)$. We now want to
geometrically realize shifts of $\cO(\cQ)$ by points of finite order
in the Picard group of the generic fiber. 

The generic fiber $\cE_{\cQ,\gen}$ of $\cE_{\cQ}$ is the product of $r$ elliptic curves
and hence, principally polarized. This 
gives an identification of the points of
finite order
$$
\Pic_0 (\cE_{\cQ,\gen})_\tor \cong (\cE_{\cQ,\gen})_\tor \,. 
$$
We have, see \cite{Tate}, 
$$
0 \to \mu_\infty \to (E_{\textup{Tate},\gen})_\tor  \xrightarrow{\quad \bnu
  \quad}  \Q/\Z \to 0 
$$
where the quotient is a constant group scheme over the spectrum of of
$\cR$. In our situation, this sequence is canonically split by
fractional powers of $q$.

Correspondingly
\begin{equation}
\Pic_0 (\cE_{\cQ,\gen})_\tor  = \fa^*_\Q/ \cha(\bA) \oplus
\Hom(\Lambda, \mu_\infty) \,.\label{PicEllA}
\end{equation}
The second term in \eqref{PicEllA} is realized by taking spectrum of
the corresponding covariant algebra in \eqref{cEcQdef}. In concrete
terms, this amounts to taking $c\in \mu_\infty$ in \eqref{thca}.

\begin{Lemma}
The first term in \eqref{PicEllA} is realized by
$$
\cQ \mapsto \cQ + \lambda \,,  \quad \lambda \in \fa^*_\Q \,.
$$
\end{Lemma}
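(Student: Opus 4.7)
The plan is to identify the difference line bundle $\cO(\cQ+\lambda) \otimes \cO(\cQ)^{-1}$ on the generic fiber $\cE_{\cQ,\gen}$ as translation by a canonical torsion point, and then match this point with the image of $\lambda$ in $\fa^*_\Q/\cha(\bA)$ under the splitting of $(\cE_{\cQ,\gen})_{\tor}$ by fractional powers of $q$.

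First, I would observe that multiplication by $a^{n\lambda}$ gives a vector space isomorphism $\cF_{\cQ,n} \xrightarrow{\sim} \cF_{\cQ+\lambda,n}$, after the base change $q \mapsto q^{1/M}$ needed to make $a^{n\lambda}$ well-defined on $\bA$. This follows directly from the valuation characterization \eqref{FQn}: for $f \in \cF_{\cQ,n}$,
\[
\bnu\bigl((a^{n\lambda} f)(a_0,q)\bigr) + n\cQ(\bnu(a_0)) = \bnu(f(a_0,q)) + n(\cQ+\lambda)(\bnu(a_0)),
\]
so the defining inequalities for the two spaces match. In particular, $\cE_{\cQ}$ and $\cE_{\cQ+\lambda}$ agree on the generic fiber and only the polarization changes.

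Next, I would compute how this identification intertwines the $\Lambda$-actions. Both sides lift the translation $a \mapsto \sigma(q)a$ by $q$-difference operators, and the twist factor $a^{n\lambda}$ transforms as $(\sigma(q) a)^{n\lambda} = q^{n\lambda(\sigma)} a^{n\lambda}$. Hence the two $\Lambda$-actions on the (now identified) graded rings differ precisely by the multiplicative cocycle $\sigma \mapsto q^{n\lambda(\sigma)}$, which defines a class in $\Pic_0(\cE_{\cQ,\gen})$ independent of $n$. By construction this is the class of $\cO(\cQ+\lambda) \otimes \cO(\cQ)^{-1}$.

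Finally, using the Appell--Humbert-style cocycle description of $\Pic_0$ for the totally degenerate abelian variety $\bA/q^\Lambda$ together with the principal polarization induced by the integral quadratic form $\sum \mu_i^2$ from \eqref{sisidu}, the cocycle $\sigma \mapsto q^{\lambda(\sigma)}$ matches translation by the ``fractional point'' $q^\lambda \in \bA^\vee/q^{\Lambda^\vee}$; under the splitting \eqref{PicEllA} this is exactly the image of $\lambda$ in $\fa^*_\Q/\cha(\bA)$. The case $\lambda \in \cha(\bA)$ is a sanity check: then $a^\lambda$ is a genuine character, so conjugating the $\Lambda$-action by $a^\lambda$ is only a change of linearization and the class vanishes. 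The main obstacle is this last identification: pinning down, via an explicit cocycle computation, that $\sigma \mapsto q^{\lambda(\sigma)}$ corresponds under the principal polarization and the fractional-power splitting to exactly $\lambda \bmod \cha(\bA)$ with the right signs.
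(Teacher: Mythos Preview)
Your approach is genuinely different from the paper's and, in outline, sound: you compare automorphy factors directly, whereas the paper passes to an isogenous cover. Concretely, the paper introduces the torus $\bA_N$ with $\cha(\bA_N)$ generated by $\cha(\bA)$ and $\lambda$, notes that $\cQ+\lambda$ is integral for $\bA_N$, and realizes $(\cE_{\cQ+\lambda},\cO(\cQ+\lambda))$ as the $\mu_N$-quotient of $(\cE_{\cQ,N},\cO(\cQ)\,a^\lambda)$ exactly as in Section~\ref{s_orderN}. Over the generic fiber this is an isogeny, and the twist is read off from $\cha(\mu_N)=\Z/N\Z \xrightarrow{\lambda} \fa^*_\Q/\cha(\bA)$. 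Your cocycle picture is closer to the paper's closing remark that one may take $c\in\mu_\infty q^{\Q}$ in \eqref{thca}, and it has the virtue of making the Appell--Humbert description explicit.

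There is, however, a genuine gap in your first step. The map ``multiplication by $a^{n\lambda}$'' is not defined: $\cF_{\cQ,n}$ and $\cF_{\cQ+\lambda,n}$ both sit inside $\cR[\bA]\FqN$, whose $\bA$-monomials are indexed by $\cha(\bA)$, and $n\lambda$ is not in $\cha(\bA)$ for general $n$. The base change $q\mapsto q^{1/M}$ adjoins fractional powers of $q$, not fractional characters of $\bA$; it does nothing to make $a^{n\lambda}$ meaningful. This is not cosmetic: without that map you have no identification of the graded rings on which to compare the two $\Lambda$-actions, so your cocycle $\sigma\mapsto q^{n\lambda(\sigma)}$ floats free of the line bundles it is supposed to describe.

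There are two clean repairs. One is to enlarge the torus to $\bA_N$ so that $a^\lambda$ becomes an honest character; you then must take $\mu_N$-invariants, and you have recovered the paper's argument. The other is to drop the phantom isomorphism entirely and argue on the generic fiber alone: both $\cO(\cQ)$ and $\cO(\cQ+\lambda)$ are described on $\bA/q^\Lambda$ by their automorphy factors, which one reads off from \eqref{cQdiff} and its analog for $\cQ+\lambda$. The ratio of these factors is $\sigma\mapsto q^{\lambda(\sigma)}$ (this is where the $q^{1/M}$ base change is actually needed, since $\lambda(\sigma)\in\Q$), and then your step~3 goes through. Either way, the valuation identity you wrote down is correct as a statement about convex functions, but it does not by itself produce a module map.
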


\noindent
Note that this shifts all tiles in $\fa^*$ by $\lambda$. If $\Delta
\subset \fa$ is one such tile, then it replaces the line bundle
$\cO(\Delta)$ on the corresponding toric variety by the sheaf 
$\cO(\Delta_\lambda)$ as in Section \ref{s_Dellam}. In particular,
if $\lambda$ is integral this doesn't change $\cE_\cQ$ and shifts
$\cO(\cQ)$ by a character. 

\begin{proof}
Let $N$ denote the order of $\lambda$ in $\fa^*_\Q/\cha(\bA)$ as in
Section \ref{s_orderN}.  The function
$$
\cQ_\lambda= \cQ+\lambda
$$
is integral for
the torus $\bA_N$ in \eqref{bAN} hence the formula \eqref{cEcQdef}
produces a scheme $\cE_{\cQ,N}$ with $\mu_N$-equivariant line bundle
such
that 
\begin{equation*}
    %\label{}
      (\cE_{\cQ_\lambda}, \cO(\cQ_\lambda)) =  (\cE_{\cQ,N}, \cO(\cQ) \, a^\lambda)
      \Big/ \!\!\!\! \Big/\mu_N \,. 
\end{equation*}
Over the generic fiber, this is an isogeny and realizes the required
twist via
$$
\cha(\mu_N) = \Z/N\Z \xrightarrow{\quad \lambda \quad}  \fa^*_\Q/
\cha(\bA) \,.
$$
\end{proof}

In more familiar terms, the above proposition says that one
can take the shift $c$ in \eqref{thca} to lie in $\mu_\infty
q^{\Q}$. Indeed, shifting the variables by a fractional power of $q$
has the effect of adding a fractional linear function to $\cQ$.

\begin{bibdiv}
	\begin{biblist}

% \bibitem{AH}
% M.~Atiyah and F.~Hirzebruch, 
% \emph{Spin-manifolds and group actions},
%  1970 Essays on Topology and Related Topics,
% 18--28, Springer, New York 

% \bib{AFO}{article}{
%    author={Aganagic, M.},
%    author={Frenkel, E.},
%    author={Okounkov, A.},
%    title={Quantum $q$-Langlands correspondence},
%    journal={Trans. Moscow Math. Soc.},
%    volume={79},
%    date={2018},
%    pages={1--83},
%   issn={0077-1554},
%   review={\MR{3881458}},
%  doi={10.1090/mosc/278},

 % \bibitem{AH}  %  M.~Aganagic and N.~Haouzi,
 %  ``ADE Little String Theory on a Riemann Surface (and Triality),''
 %  arXiv:1506.04183 [hep-th].
 %  %%CITATION = ARXIV:1506.04183;%%

\bibitem{ese}
  M.~Aganagic and A.~Okounkov,
  \emph{Elliptic stable envelopes}, JAMS,
  \texttt{arXiv:1604.00423}. 

\bibitem{AO2}
  M.~Aganagic and A.~Okounkov,
  \emph{Duality interfaces in 3-dimensional theories},
  talks at StringMath2019, available from
  \url{https://www.stringmath2019.se/scientific-talks-2/}.

\bib{Alex}{article}{
   author={Alexeev, Valery},
   title={Complete moduli in the presence of semiabelian group action},
   journal={Ann. of Math. (2)},
   volume={155},
   date={2002},
   number={3},
   pages={611--708},
   % issn={0003-486X},
   % review={\MR{1923963}},
   % doi={10.2307/3062130},
}

\bib{ArinkBezr}{article}{
   author={Arinkin, Dmitry},
   author={Bezrukavnikov, Roman},
   title={Perverse coherent sheaves},
 %  language={English, with English and Russian summaries},
   journal={Mosc. Math. J.},
   volume={10},
   date={2010},
   number={1},
   pages={3--29, 271},
   % issn={1609-3321},
   % review={\MR{2668828}},
   % doi={10.17323/1609-4514-2010-10-1-3-29},
}

\bib{BB}{article}{
   author={Bia\l{}ynicki-Birula, A.},
   title={Some theorems on actions of algebraic groups},
   journal={Ann. of Math. (2)},
   volume={98},
   date={1973},
   pages={480--497}
}

% \bib{BikLang}{book}{
%    author={Birkenhake, Christina},
%    author={Lange, Herbert},
%    title={Complex abelian varieties},
%    series={Grundlehren der Mathematischen Wissenschaften [Fundamental
%    Principles of Mathematical Sciences]},
%    volume={302},
%    publisher={Springer-Verlag, Berlin},
%    date={2004} 
% }

\bib{Blickle}{article}{
   author={Blickle, Manuel},
   title={Multiplier ideals and modules on toric varieties},
   journal={Math. Z.},
   volume={248},
   date={2004},
   number={1},
   pages={113--121},
   % issn={0025-5874},
   % review={\MR{2092724}},
   % doi={10.1007/s00209-004-0655-y},
}

% \bibitem{BFN} 
%   A.~Braverman, M.~Finkelberg and H.~Nakajima,
%   ``Towards a mathematical definition of Coulomb branches of $3$-dimensional $\mathcal N=4$ gauge theories, II,''
%   arXiv:1601.03586 [math.RT].
%   %%CITATION = ARXIV:1601.03586;%%
%   %1 citations counted in INSPIRE as of 02 Mar 2016

% \bib{BFN}{article}{
%    author={Braverman, Alexander},
%    author={Finkelberg, Michael},
%    author={Nakajima, Hiraku},
%    title={Towards a mathematical definition of Coulomb branches of
%    3-dimensional $\mathcal{N}=4$ gauge theories, II},
%    journal={Adv. Theor. Math. Phys.},
%    volume={22},
%    date={2018},
%    number={5},
%    pages={1071--1147},
%    % issn={1095-0761},
%    % review={\MR{3952347}},
%    % doi={10.4310/ATMP.2018.v22.n5.a1},
% }

  \bib{Brion}{article}{
   author={Brion, Michel},
   title={Compactification de l'espace des modules des vari\'{e}t\'{e}s ab\'{e}liennes
   principalement polaris\'{e}es (d'apr\`es V. Alexeev)},
   language={French, with French summary},
   note={S\'{e}minaire Bourbaki. Vol. 2005/2006},
   journal={Ast\'{e}risque},
   number={311},
   date={2007},
   pages={Exp. No. 952, vii, 1--31},
   % issn={0303-1179},
   % isbn={978-2-85629-230-3},
   % review={\MR{2359037}},
}

%  \bibitem{BDG} 
%   M.~Bullimore, T.~Dimofte and D.~Gaiotto,
%   ``The Coulomb Branch of 3d $\mathcal{N}=4$ Theories,''
%   arXiv:1503.04817 [hep-th].

\bib{CG}{book}{
   author={Chriss, Neil},
   author={Ginzburg, Victor},
   title={Representation theory and complex geometry},
   publisher={Birkh\"{a}user Boston, Inc., Boston, MA},
   date={1997}
}

% \bib{CFKM}{article}{
%    author={Ciocan-Fontanine, Ionu\c{t}},
%    author={Kim, Bumsig},
%    author={Maulik, Davesh},
%    title={Stable quasimaps to GIT quotients},
%    journal={J. Geom. Phys.},
%    volume={75},
%    date={2014},
%    pages={17--47},
%    % issn={0393-0440},
%    % review={\MR{3126932}},
%    % doi={10.1016/j.geomphys.2013.08.019},
% }

\bib{Dan}{article}{
   author={Danilov, V. I.},
   title={The geometry of toric varieties},
%   language={Russian},
   journal={Uspekhi Mat. Nauk},
   volume={33},
   date={1978},
   number={2(200)},
   pages={85--134, 247},
   % issn={0042-1316},
   % review={\MR{495499}},
}

\bib{Ful}{book}{
   author={Fulton, William},
   title={Introduction to toric varieties},
   series={Annals of Mathematics Studies},
   volume={131},
   note={The William H. Roever Lectures in Geometry},
   publisher={Princeton University Press, Princeton, NJ},
   date={1993},
   % pages={xii+157},
   % isbn={0-691-00049-2},
   % review={\MR{1234037}},
   % doi={10.1515/9781400882526},
}

%  \bibitem{GW} 
%   D.~Gaiotto and E.~Witten,
%   ``S-Duality of Boundary Conditions In N=4 Super Yang-Mills Theory,''
%   Adv.\ Theor.\ Math.\ Phys.\  {\bf 13}, no. 3, 721 (2009)
%   doi:10.4310/ATMP.2009.v13.n3.a5
%   [arXiv:0807.3720 [hep-th]].
%   %%CITATION = doi:10.4310/ATMP.2009.v13.n3.a5;%%
%   %200 citations counted in INSPIRE as of 02 Mar 2016

%   \bibitem{GW2} 
%   D.~Gaiotto and E.~Witten,
%   ``Knot Invariants from Four-Dimensional Gauge Theory,''
%   Adv.\ Theor.\ Math.\ Phys.\  {\bf 16}, no. 3, 935 (2012)
%   doi:10.4310/ATMP.2012.v16.n3.a5
%   [arXiv:1106.4789 [hep-th]].
%   %%CITATION = doi:10.4310/ATMP.2012.v16.n3.a5;%%
%   %49 citations counted in INSPIRE as of 02 Mar 2016
  
% \bibitem{pd}
% D.~Gaiotto, L.~Rastelli and S.~S.~Razamat,
%   ``Bootstrapping the superconformal index with surface defects,''
%   JHEP {\bf 1301}, 022 (2013)
%   doi:10.1007/JHEP01(2013)022
%   [arXiv:1207.3577 [hep-th]];
%   %%CITATION = doi:10.1007/JHEP01(2013)022;%%
%   %76 citations counted in INSPIRE as of 25 Mar 2016,
%   D.~Gaiotto and H.~C.~Kim,
%   ``Surface defects and instanton partition functions,''
%   arXiv:1412.2781 [hep-th]; M.~Bullimore, H.~C.~Kim and P.~Koroteev,
%   ``Defects and Quantum Seiberg-Witten Geometry,''
%   JHEP {\bf 1505}, 095 (2015)
%   doi:10.1007/JHEP05(2015)095
%   [arXiv:1412.6081 [hep-th]].
%   %%CITATION = doi:10.1007/JHEP05(2015)095;%%
%   %13 citations counted in INSPIRE as of 25 Mar 2016
%   %%CITATION = ARXIV:1412.2781;%%
%   %9 citations counted in INSPIRE as of 25 Mar 2016

\bib{Ganter}{article}{
   author={Ganter, Nora},
   title={The elliptic Weyl character formula},
   journal={Compos. Math.},
   volume={150},
   date={2014},
   number={7},
   pages={1196--1234},
   % issn={0010-437X},
   % review={\MR{3230851}},
   % doi={10.1112/S0010437X1300777X},
}

% \bib{GTL}{article}{
%    author={Gautam, Sachin},
%    author={Toledano Laredo, Valerio},
%    title={Monodromy of the trigonometric Casimir connection for
%    $\germ{sl}\sb 2$},
%    conference={
%       title={Noncommutative birational geometry, representations and
%       combinatorics},
%    },
%    book={
%       series={Contemp. Math.},
%       volume={592},
%       publisher={Amer. Math. Soc., Providence, RI},
%    },
%    date={2013},
%    pages={137--176},
%    % review={\MR{3087943}},
%    % doi={10.1090/conm/592/11864},
% }

\bib{Gepner}{book}{
   author={Gepner, David J.},
   title={Homotopy topoi and equivariant elliptic cohomology},
   note={Thesis (Ph.D.)--University of Illinois at Urbana-Champaign},
   publisher={ProQuest LLC, Ann Arbor, MI},
   date={2006},
   pages={67},
   % isbn={978-0542-77401-0},
   % review={\MR{2708962}},
}

\bibitem{GKV}
V.~Ginzburg, M.~Kapranov, and E.~Vasserot,
\emph{Elliptic Algebras and Equivariant Elliptic Cohomology},
\texttt{arXiv:q-alg/9505012}. 

% \bibitem{GL}
% A.~Givental and Y.-P.~Lee, 
% \emph{Quantum K-theory on flag manifolds, finite-difference Toda lattices and quantum groups},
% \texttt{arXiv:math/0108105}.  

% \bib{GL}{article}{
%    author={Givental, Alexander},
%    author={Lee, Yuan-Pin},
%    title={Quantum $K$-theory on flag manifolds, finite-difference Toda
%    lattices and quantum groups},
%    journal={Invent. Math.},
%    volume={151},
%    date={2003},
%    number={1},
%    pages={193--219},
%    % issn={0020-9910},
%    % review={\MR{1943747}},
%    % doi={10.1007/s00222-002-0250-y},
% }

% \bibitem{GT}
% A.~Givental and V.~Tonita,
% \emph{The Hirzebruch--Riemann--Roch theorem in true genus-0 quantum
%   K-theory}, 
% \texttt{arXiv:1106.3136}.

% \bib{GT}{article}{
%    author={Givental, Alexander},
%    author={Tonita, Valentin},
%    title={The Hirzebruch-Riemann-Roch theorem in true genus-0 quantum
%    K-theory},
%    conference={
%       title={Symplectic, Poisson, and noncommutative geometry},
%    },
%    book={
%       series={Math. Sci. Res. Inst. Publ.},
%       volume={62},
%       publisher={Cambridge Univ. Press, New York},
%    },
%    date={2014},
%    pages={43--91},
%    %review={\MR{3380674}},
% }

\bibitem{GKM}
M.~Goresky, R.~Kottwitz, and R.~MacPherson, 
\emph{Equivariant cohomology, Koszul duality, and the localization
  theorem},
 Invent.\ Math.\ \textbf{131} (1998), no.~1, 25--83. 

\bib{Groj}{article}{
   author={Grojnowski, I.},
   title={Delocalised equivariant elliptic cohomology},
   conference={
      title={Elliptic cohomology},
   },
   book={
      series={London Math. Soc. Lecture Note Ser.},
      volume={342},
      publisher={Cambridge Univ. Press, Cambridge},
   },
   date={2007},
   pages={114--121},
   % review={\MR{2330510 (2008i:55006)}},
   % doi={10.1017/CBO9780511721489.007},
}

\bibitem{DHLMO}
  D.~Halpern-Leistner, D.~Maulik, A.~Okounkov,
  \emph{Caterogorical stable envelopes and magic windows}, 
in preraration.

\bibitem{KonSmi}
  Y.~Kononov and A.~Smirnov, 
  \emph{Pursuing quantum difference equations I: stable envelopes of subvarieties}, 
\texttt{arXiv:2004.07862}.

\bib{Lurie}{article}{
   author={Lurie, J.},
   title={A survey of elliptic cohomology},
   conference={
      title={Algebraic topology},
   },
   book={
      series={Abel Symp.},
      volume={4},
      publisher={Springer, Berlin},
   },
   date={2009},
   pages={219--277},
 %  review={\MR{2597740 (2011f:55009)}}
}

% \bib{MauObl}{article}{
%    author={Maulik, Davesh},
%    author={Oblomkov, Alexei},
%    title={Quantum cohomology of the Hilbert scheme of points on $\scr A\sb
%    n$-resolutions},
%    journal={J. Amer. Math. Soc.},
%    volume={22},
%    date={2009},
%    number={4},
%    pages={1055--1091},
%    % issn={0894-0347},
%    % review={\MR{2525779 (2010f:14060)}},
%    % doi={10.1090/S0894-0347-09-00632-8},
% }

% \bibitem{MO1}
% D.~Maulik and A.~Okounkov, 
% \emph{Quantum groups and quantum cohomology}, 
% \texttt{arXiv:1211.1287}. 

\bib{MO1}{article}{
   author={Maulik, Davesh},
   author={Okounkov, Andrei},
   title={Quantum groups and quantum cohomology},
   language={English, with English and French summaries},
   journal={Ast\'{e}risque},
   number={408},
   date={2019},
   pages={ix+209},
   % issn={0303-1179},
   % isbn={978-2-85629-900-5},
   % review={\MR{3951025}},
   % doi={10.24033/ast},
}

% \bibitem{MO2}
% D.~Maulik and A.~Okounkov, 
% unpublished. 

% \bib{McGN}{article}{
%    author={McGerty, Kevin},
%    author={Nevins, Thomas},
%    title={Kirwan surjectivity for quiver varieties},
%    journal={Invent. Math.},
%    volume={212},
%    date={2018},
%    number={1},
%    pages={161--187}
% }

% \bib{Merk}{article}{
%    author={Merkurjev, Alexander S.},
%    title={Equivariant $K$-theory},
%    conference={
%       title={Handbook of $K$-theory. Vol. 1, 2},
%    },
%    book={
%       publisher={Springer, Berlin},
%    },
%    date={2005},
%    pages={925--954}
% }

% \bib{Mour}{article}{
%    author={de Moura, Adriano Adrega},
%    title={Elliptic dynamical $R$-matrices from the monodromy of the
%    $q$-Knizhnik-Zamolodchikov equations for the standard representation of
%    $U\sb q(\widetilde{\germ s\germ l}\sb {n+1})$},
%    journal={Asian J. Math.},
%    volume={7},
%    date={2003},
%    number={1},
%    pages={91--114},
%    % issn={1093-6106},
%    % review={\MR{2015243 (2005b:81061)}},
% }

\bib{Nak1}{article}{
   author={Nakajima, Hiraku},
   title={Instantons on ALE spaces, quiver varieties, and Kac-Moody
   algebras},
   journal={Duke Math. J.},
   volume={76},
   date={1994},
   number={2},
   pages={365--416},
   % issn={0012-7094},
   % review={\MR{1302318 (95i:53051)}},
}

% \bib{Nak2}{article}{
%    author={Nakajima, Hiraku},
%    title={Quiver varieties and Kac-Moody algebras},
%    journal={Duke Math. J.},
%    volume={91},
%    date={1998},
%    number={3},
%    pages={515--560},
%    % issn={0012-7094},
%    % review={\MR{1604167 (99b:17033)}},
% }

% \bib{Nak3}{article}{
%    author={Nakajima, Hiraku},
%    title={Quiver varieties and finite-dimensional representations of quantum
%    affine algebras},
%    journal={J. Amer. Math. Soc.},
%    volume={14},
%    date={2001},
%    number={1},
%    pages={145--238},
%    % issn={0894-0347},
%    % review={\MR{1808477 (2002i:17023)}},
% }

% % \bibitem{NakCoul}
% % Hiraku Nakajima,
% % \emph{Towards a mathematical definition of Coulomb branches of $3$-dimensional $\mathcal N=4$ gauge theories, I}
% % \texttt{arXiv:1503.03676}. 

% \bib{NakCoul}{article}{
%    author={Nakajima, Hiraku},
%    title={Towards a mathematical definition of Coulomb branches of
%    3-dimensional $\mathcal{N}=4$ gauge theories, I},
%    journal={Adv. Theor. Math. Phys.},
%    volume={20},
%    date={2016},
%    number={3},
%    pages={595--669},
%    % issn={1095-0761},
%    % review={\MR{3565863}},
%    % doi={10.4310/ATMP.2016.v20.n3.a4},
% }

% % \bibitem{NO}
% % N.~Nekrasov and A.~Okounkov, 
% % \emph{Membranes and Sheaves}, 
% % \texttt{arXiv:1404.2323} 

% % \bibitem{Opcmi}
% % A.~Okounkov, 
% % \emph{Lectures on K-theoretic computations in enumerative geometry},
% % \texttt{arXiv:1512.07363}

\bib{Opcmi}{article}{
   author={Okounkov, Andrei},
   title={Lectures on K-theoretic computations in enumerative geometry},
   conference={
      title={Geometry of moduli spaces and representation theory},
   },
   book={
      series={IAS/Park City Math. Ser.},
      volume={24},
      publisher={Amer. Math. Soc., Providence, RI},
   },
   date={2017},
   pages={251--380},
%   review={\MR{3752463}},
}

\bib{SaltLake}{article}{
   author={Okounkov, Andrei},
   title={Enumerative geometry and geometric representation theory},
   conference={
      title={Algebraic geometry: Salt Lake City 2015},
   },
   book={
      series={Proc. Sympos. Pure Math.},
      volume={97},
      publisher={Amer. Math. Soc., Providence, RI},
   },
   date={2018},
   pages={419--457},
   review={\MR{3821158}},
}

\bib{Rio}{article}{
   author={Okounkov, Andrei},
   title={On the crossroads of enumerative geometry and geometric
   representation theory},
   conference={
      title={Proceedings of the International Congress of
      Mathematicians---Rio de Janeiro 2018. Vol. I. Plenary lectures},
   },
   book={
      publisher={World Sci. Publ., Hackensack, NJ},
   },
   date={2018},
   pages={839--867},
   review={\MR{3966746}},
}
		
\bibitem{nonab}
\bysame, \emph{Nonabelian stable envelopes, vertex functions with descendents, and integral
  solutions of $q$-difference equations},
\texttt{arXiv:2010.13217}.

%   H.~Ooguri and C.~Vafa,
%   ``Knot invariants and topological strings,''
%   Nucl.\ Phys.\ B {\bf 577}, 419 (2000)

% \bib{OP1}{article}{
%    author={Okounkov, A.},
%    author={Pandharipande, R.},
%    title={Quantum cohomology of the Hilbert scheme of points in the plane},
%    journal={Invent. Math.},
%    volume={179},
%    date={2010},
%    number={3},
%    pages={523--557},
%    % issn={0020-9910},
%    % review={\MR{2587340 (2011b:14124)}},
%    % doi={10.1007/s00222-009-0223-5},
% }

% \bibitem{OS}
% A.~Okounkov and A.~Smirnov, 
% \emph{Quantum difference equations for Nakajima varieties},
% preprint.

%   \bibitem{OV} 
%   H.~Ooguri and C.~Vafa,
%   ``Knot invariants and topological strings,''
%   Nucl.\ Phys.\ B {\bf 577}, 419 (2000)
%   doi:10.1016/S0550-3213(00)00118-8
%   [hep-th/9912123].
%   %%CITATION = doi:10.1016/S0550-3213(00)00118-8;%%
%   %318 citations counted in INSPIRE as of 02 Mar 2016

% \bib{Proud}{article}{
%    author={Proudfoot, Nicholas J.},
%    title={A survey of hypertoric geometry and topology},
%    conference={
%       title={Toric topology},
%    },
%    book={
%       series={Contemp. Math.},
%       volume={460},
%       publisher={Amer. Math. Soc., Providence, RI},
%    },
%    date={2008},
%    pages={323--338},
%    % review={\MR{2428365 (2010a:14087)}},
%    % doi={10.1090/conm/460/09027},
% }

\bib{Rosu}{article}{
   author={Rosu, Ioanid},
   title={Equivariant elliptic cohomology and rigidity},
   journal={Amer. J. Math.},
   volume={123},
   date={2001},
   number={4},
   pages={647--677},
   % issn={0002-9327},
   % review={\MR{1844573 (2002e:55008)}},
}

\bib{Shen}{book}{
   author={Shenfeld, Daniel},
   title={Abelianization of stable envelopes in symplectic resolutions},
   note={Thesis (Ph.D.)--Princeton University},
   publisher={ProQuest LLC, Ann Arbor, MI},
   date={2013},
   pages={75}
}

% \bibitem{Smir1}
% A.~Smirnov, 
% \emph{Polynomials associated with fixed points on the instanton moduli
%   space}, 
% \texttt{arXiv:1404.5304}. 

% \bibitem{SmirHilb}
% A.~Smirnov, 
% \emph{Elliptic stable envelope for Hilbert scheme of points in the plane}, 
% \texttt{arXiv:1804.08779}. 

\bib{SmirHilb}{article}{
   author={Smirnov, Andrey},
   title={Elliptic stable envelope for Hilbert scheme of points in the
   plane},
   journal={Selecta Math. (N.S.)},
   volume={26},
   date={2020},
   number={1},
   pages={Art. 3, 57},
   % issn={1022-1824},
   % review={\MR{4041505}},
   % doi={10.1007/s00029-019-0527-2},
 }

\bibitem{stacks}
  see \texttt{stacks.math.columbia.edu/tag/0BF4}. I thank Bhargav Bhatt
  and Johan de Jong for this entry. 
% A.~Smirnov, 
% \emph{Elliptic stable envelope for Hilbert scheme of points in the plane}, 
% \texttt{arXiv:1804.08779}. 

  \bib{Tate}{article}{
   author={Tate, John},
   title={A review of non-Archimedean elliptic functions},
   conference={
      title={Elliptic curves, modular forms, \& Fermat's last theorem},
      address={Hong Kong},
      date={1993},
   },
   book={
      series={Ser. Number Theory, I},
      publisher={Int. Press, Cambridge, MA},
   },
   date={1995},
   pages={162--184},
 %  review={\MR{1363501}},
}

% \bib{Stok}{article}{
%    author={Stokman, Jasper V.},
%    title={Connection problems for quantum affine KZ equations and integrable
%    lattice models},
%    journal={Comm. Math. Phys.},
%    volume={338},
%    date={2015},
%    number={3},
%    pages={1363--1409},
%    % issn={0010-3616},
%    % review={\MR{3355817}},
%    % doi={10.1007/s00220-015-2375-z},
% }

% \bib{Vara}{article}{
%    author={Varagnolo, Michela},
%    title={Quiver varieties and Yangians},
%    journal={Lett. Math. Phys.},
%    volume={53},
%    date={2000},
%    number={4},
%    pages={273--283},
%    % issn={0377-9017},
%    % review={\MR{1818101 (2002e:17023)}},
%    % doi={10.1023/A:1007674020905},
% }

% \bibitem{WF} 
%   E.~Witten,
%   ``Fivebranes and Knots,''
%   arXiv:1101.3216 [hep-th].
%   %%CITATION = ARXIV:1101.3216;%%
%   %103 citations counted in INSPIRE as of 02 Mar 2016
  
%   \bibitem{WP} 
%   E.~Witten,
%   ``Phases of N=2 theories in two-dimensions,''
%   Nucl.\ Phys.\ B {\bf 403}, 159 (1993)
%   doi:10.1016/0550-3213(93)90033-L
%   [hep-th/9301042].
%   %%CITATION = doi:10.1016/0550-3213(93)90033-L;%%
%   %994 citations counted in INSPIRE as of 17 Mar 2016

	\end{biblist}
\end{bibdiv}

\end{document}